\definecolor{shadecolor}{rgb}{1,0.9,0.7}
\let\oldtocsection=\tocsection
\let\oldtocsubsection=\tocsubsection
\let\oldtocsubsubsection=\tocsubsubsection
\renewcommand{\tocsection}[2]{\hspace{0em}\oldtocsection{#1}{#2}}
\renewcommand{\tocsubsection}[2]{\hspace{1em}\oldtocsubsection{#1}{#2}}
\renewcommand{\tocsubsubsection}[2]{\hspace{2em}\oldtocsubsubsection{#1}{#2}}
\newtheorem{theorem}{Theorem}[section]
\newtheorem{lemma}[theorem]{Lemma}
\newtheorem{proposition}[theorem]{Proposition}
\newtheorem{corollary}[theorem]{Corollary}
\theoremstyle{definition}
\newtheorem{definition}[theorem]{Definition}
\newtheorem{construction}[theorem]{Construction}
\newtheorem{discussion}[theorem]{Discussion}
\newtheorem{example}[theorem]{Example}
\theoremstyle{remark}
\newtheorem{remark}[theorem]{Remark}
\numberwithin{equation}{section}
\numberwithin{figure}{section}
\newcommand{\ChB}[1]{{#1}}
\newcommand {\lfor} {\llbracket}
\newcommand {\rfor} {\rrbracket}
\newcommand{\BB} {\mathbb{B}}
\newcommand{\DD} {\mathbb{D}}
\newcommand{\NN} {\mathbb{N}}
\newcommand{\ZZ} {\mathbb{Z}}
\newcommand{\RR} {\mathbb{R}}
\newcommand{\CC} {\mathbb{C}}
\newcommand{\PP} {\mathbb{P}}
\renewcommand{\AA} {\mathbb{A}}
\newcommand {\shA}  {\mathcal{A}}
\newcommand {\shE}  {\mathcal{E}}
\newcommand {\shF}  {\mathcal{F}}
\newcommand {\shHom} {\mathcal{H}\!\text{\textit{om}}}
\newcommand {\shM}  {\mathcal{M}}
\newcommand {\shP}  {\mathcal{P}}
\newcommand {\shX}  {\mathcal{X}}
\newcommand {\shY}  {\mathcal{Y}}
\newcommand {\shZ}  {\mathcal{Z}}
\newcommand {\Arg}  {\operatorname{Arg}}
\newcommand {\cl}  {\operatorname{cl}}
\newcommand {\conv} {\operatorname{Conv}}
\newcommand {\Ext}  {\operatorname{Ext}}
\newcommand {\ev}  {\operatorname{ev}}
\newcommand {\GL}  {\operatorname{GL}}
\newcommand {\gp}  {{\operatorname{gp}}}
\newcommand {\Hom}  {\operatorname{Hom}}
\newcommand {\id}  {\operatorname{id}}
\newcommand {\im}  {\operatorname{im}}
\newcommand {\Int}  {\operatorname{Int}}
\renewcommand {\ker } {\operatorname{ker}}
\newcommand {\KN} {\mathrm{KN}}
\newcommand {\lra}  {\longrightarrow}
\newcommand {\ls}  {\dagger}
\newcommand {\M} {\mathcal{M}}
\renewcommand {\max} {{\operatorname{max}}}
\newcommand {\maxid} {\mathfrak{m}}
\renewcommand{\O}  {\mathcal{O}}
\newcommand {\ol} {\overline}
\renewcommand{\P}  {\mathscr{P}}
\newcommand {\PM} {\operatorname{PM}}
\newcommand {\shLS} {\mathcal{LS}}
\newcommand {\sing} {\mathrm{sing}}
\newcommand {\Spec} {\operatorname{Spec}}
\newcommand {\Specan} {\operatorname{Specan}}
\newcommand {\ul} {\underline}
\newcommand {\X} {\mathfrak X}
\def\mydate{\ifcase\month \or January\or February\or March\or
April\or May\or June\or July\or August\or September\or October\or 
November\or December\fi \space\number\day,\space\number\year}
\newcommand\restr[2]{{% we make the whole thing an ordinary symbol
  \left.\kern-\nulldelimiterspace % automatically resize the bar with \right
  #1 % the function
  \vphantom{\big|} % pretend it's a little taller at normal size
  \right|_{#2} % this is the delimiter
  }}
\begin{document}

%===========================================================
\title[Real locus of Kato-Nakayama spaces]{Real loci in (log-) Calabi-Yau manifolds via Kato-Nakayama spaces of toric degenerations}

% author one information
\author{H\"ulya Arg\"uz}
\address{Laboratoire de Math\'ematiques, Universit\'e de
Versailles St Quentin en Yvelines, France}
\email{nuromur-hulya.arguz@uvsq.fr}

% author two information
%\author{Bernd Siebert}
%\address{FB Mathematik,
%Universit\"at Hamburg, Bundesstra\ss e~55, 20146 Hamburg,
%Germany}
%\curraddr{}
%\email{bernd.siebert@math.uni-hamburg.de}

\date{\today}

\begin{abstract}
We study the real loci of toric degenerations of complex varieties with
reducible central fibre. We show that the topology of such degenerations can be
explicitly described via the Kato-Nakayama space of the central fibre as a log
space. We furthermore provide generalities of real structures in log geometry and
their lift to Kato-Nakayama spaces. A key point of this paper is a description of the Kato-Nakayama space
of a toric degeneration and its real locus, both as bundles determined by
tropical data. We provide several examples including real toric degenerations of K3-surfaces and a
toric degeneration of local $\PP^2$.
\end{abstract}

\maketitle
\tableofcontents

%===========================================================
%===========================================================
\section*{Introduction.}
The Strominger–Yau–Zaslow \cite{SYZ} conjecture in mirror symmetry postulates that mirror pairs of Calabi--Yau threefolds $(X,\breve{X})$ admit dual special Lagrangian torus fibrations 
\[ f\colon X\to B, \,\ \,\ \mathrm{and} \,\ \,\  \breve{f} \colon \breve{X}\to B \]
over a real three-dimensional affine
manifold $B$ with singular fibers over a discriminant locus $\Delta \subset B$. After weakening the condition on the torus fibration, Ruan \cite{Ru1,Ru2,Ru3},
Gross \cite{GrossInv,GrossTopology,GrossGeometry}, and Haase–Zharkov \cite{HZ1,HZ2,HZ3} have constructed integral affine manifolds together
with topological or Lagrangian torus fibrations on Calabi–Yau toric complete intersections. In this article, we provide new tools to construct topological torus fibrations on Calabi--Yau manifolds in bigger generality, by using log geometry and Kato--Nakayama spaces \cite{KN}, which are topological spaces resembling some features of exploded manifolds studied in \cite{P}. In particular, we consider \emph{toric degenerations} of Calabi--Yau varieties, introduced
by Gross–Siebert \cite{invitation}, and describe how to construct torus fibrations on the general fibre of such degenerations, by investigating the \emph{Kato--Nakayama space} of the central fiber. We further study the topology of the Kato--Nakayama space of the central fiber, as well as the real loci in it, and provide canonical descriptions for both the Kato--Nakayama space and its real loci as bundles determined by tropical data.

%An algebro-geometric approach to this conjecture, based on studying mirror symmetry via degenerations of Calabi-Yau manifolds, has been developed in \cite{affinecomplex}.
% This approach has a significant generalisation to log Calabi--Yau manifolds, which are smooth quasi-projective varieties that admit a smooth compactification by an anticanonical normal crossing divisor \cite{theta,Utah}. Much of the results obtained in this article hold in
%this more general setup for log Calabi--Yau manifolds,\Bernd{You are talking about generalizations of results before telling what these results are. Nothing the reader would be interested in at this point.} to keep the presentation simple we focus on the original Calabi-Yau case.\Bernd{The last part should be a separate sentence.}
A toric degeneration is a
degeneration of algebraic varieties $\delta:\X\to T=\Spec R$ with $R$ a discrete
valuation ring and with central fibre $X_0=\delta^{-1}(0)$ a union of toric
varieties, glued pairwise along toric divisors. Here $0\in \Spec R$ is the
closed point. We also require that $\delta$ is toroidal at the zero-dimensional
toric strata, that is, \'etale locally near these points, $\delta$ is given by a
monomial equation in an affine toric variety. Probably the most remarkable aspect of toric degenerations is that the homogeneous coordinate ring for the total space of such degenerations can be
produced \emph{canonically} from the central fibre $X_0$ and some residual
information on the family $\X$ \cite{affinecomplex}, captured by what is called a \emph{log
structure}. By residual information we mean the following: By Proposition \ref{Prop: description of shX in codim=1} we can assume at a general point of the singular locus of $X_0$, the local equation of $\shX$ is given as 
\begin{equation}
    \label{Eq: local eqn}
xy=f\cdot t^e    
\end{equation}
where $t\in R$
generates the maximal ideal.
%-- see Proposition \ref{Prop: description of shX in codim=1}. 
Then, the log structure captures $e\in \mathbb{N}$ and
the restriction of $f$ to $x=y=0$. The information of the log
structure on $X_0$ is encoded in the \emph{gluing data}%\Bernd{Again, I don't
%think it is useful to overwhelm the reader with these informations. You are not
%using gluing data until shortly before (0.4) and the reader still doesn't know
%what the paper is about! In any case, wouldn't you want to use real gluing data
%in Prop.0.5 in addition to giving the stronger statement with $(s_v)$? For the
%purpose of the introduction, it may be better to just state Cor.6.2 and then
%refer to Prop.6.1 for generalizations!}
\[ s\in
H^1(B,\iota_* \check\Lambda\otimes\CC^\times)\]
where $\iota:B_0=B\setminus \Delta \to B$ is the inclusion and $\check\Lambda$ denotes the sheaf of integral cotangent vectors on $B_0$. For a review of integral affine manifolds, log structures, and gluing data see \cite{affinecomplex,theta}. 

One of the key results of \cite{affinecomplex} is a reconstruction algorithm of toric degenerations from the data of a \emph{toric log Calabi--Yau space} $(X_0,\M_{X_0})$ \cite[Defn.4.3]{logmirror1}, which plays the role of the central fiber. Here, $X_0$ is a proper reduced algebraic space, and $\M_{X_0}$ stands for the log structure on it. 
%Toric log Calabi--Yau spaces come together with a log smooth morphism
%\begin{equation}
%\label{Eq: X0toSpecC}
%\delta \colon (X_0,\M_{X_0}) \to (\Spec \CC, \NN \oplus \CC^{\times})
%\end{equation}
%to the standard log point. 
While the reconstruction of a toric degeneration is done by an inductive procedure involving
a \emph{wall structure} \cite{affinecomplex}, and is typically impossible to
carry through in practice, many features of the family are already contained in
the log structure. In this paper we study topological features of toric degenerations by investigating the log structure on $X_0$.
%A simple characterization of the nature of the log structure
%in the present situation has been given in \cite{logmirror1}, Theorem~3.27. 

We first show that the
topology of the toric degeneration reconstructed from a toric log Calabi--Yau $(X_0,\M_{X_0})$ can be read off canonically by considering the \emph{Kato--Nakayama space}, or \emph{Betti realization} \cite{KN} of $(X_0,\M_{X_0})$. We review the construction of the Kato--Nakayama space along with several examples in \S\ref{Sect: Kato-Nakayama space}.
One of the major outcomes of \cite{NO} in this context, displayed as Theorem \ref{Thm: Nakayama/Ogus}, shows that under certain assumptions a morphism  $f:(X,\mathcal{M}_X)\to (Y,\mathcal{M}_Y)$ induces a topological fiber bundle, that is, a continuous surjection which satisfies local triviality,
\[ f^{KN}:(X,\mathcal{M}_X)^{KN} \lra (Y,\mathcal{M}_Y)^{KN}. \]
Following this intuition we obtain the following result.
\begin{theorem}(= Theorem \ref{Thm: KN for X0 in simple case})
Let $\delta:\shX\to \DD=\{z\in\CC\,|\, |z|<R\}$ be a toric degeneration with
strongly semi-simple singularities (Definition~\ref{Def: simple singularities}).
Then the restriction $\shX\setminus X_0\to \DD\setminus\{0\}$ is a topological
fiber bundle isomorphic to
\[
\delta^{KN}\times\id: X_0^\KN\times(0,r)\lra (\O^{\dagger})^{KN} \times(0,r) = U(1)\times(0,r).
\]
In particular, for $t= re^{i\theta}\in\DD$, the restriction of this isomorphism
to the fibre over $(e^{i\theta},r)$ induces a homeomorphism
\begin{equation}
\label{Eq: restriction to a phase}
\shX_t \simeq X_0^\KN(e^{i\theta})
\end{equation}
between the general fibre of $\delta$ and the fiber $X_0^\KN(e^{i\theta})$ of $\delta^{KN}$ over $e^{i\theta}\in U(1)$.
\end{theorem}
In \S\ref{sec: The Topological Classification of Torus Bundles} we further study the topology of the Kato--Nakayama space of a toric log Calabi--Yau, and investigate topological torus fibrations on the general fibre of the associated degeneration. Such fibrations for specific classes of Calabi--Yau threefolds were constructed in \cite{GrossInv}, by describing suitable Calabi--Yau compactifications of smooth torus fibrations over the discriminant locus on the base. Constructing topological torus fibrations via Kato--Nakayama spaces offers a number of advantages. In particular, it replaces a single Calabi--Yau compactification with a moduli space of compactifications parametrised by gluing data. Moreover, it holds at any dimension and is not restricted to three dimensional Calabi--Yau's. Furthermore, this construction is intrinsic to the Gross--Siebert program in mirror symmetry and  though for simplicity of the exposition we concentrate on the Calabi--Yau case throughout this article, all results have straight forward generalizations to the context of log Calabi--Yau varieties which also play important role in this program \cite{GHK, theta}. 

To provide an explicit description of topological torus fibrations on the Kato--Nakayama space $X_0^{\mathrm{KN}}$ we first define momentum maps from $X_0$ onto a topological manifold $B$, which is realised as the intersection complex of $X_0$. This description does not require reference to a symplectic structure. We then obtain the following result.
\begin{theorem}(= Theorem \ref{Thm: (X_0)^KN->B is a torus bundle})
The composition of the generalized momentum map $\mu: X_0 \to B$, and the natural retraction map $r: X_0^{\mathrm{KN}} \to X_0$,
\[ \mu \circ r: X_0^{\mathrm{KN}} \to B, \]
 is a
bundle of $(n+1)$-tori over $B\setminus\shA$, where $n = \dim B$, and $\shA$ is the amoeba image of the discriminant locus in $B$. Similarly, $X_0^\KN(\xi)$ is a bundle of $n$-tori over $B\setminus\shA$.
\end{theorem}
  Theorem \ref{Thm: (X_0)^KN->B is a torus bundle} provides a description of a topological torus fibration on the general fibre of a torus degeneration away from the discriminant locus. Studying singular fibres over the discriminant locus amounts to analysing the topology of the Kato--Nakayama space over points in $X_0$ which do not carry a \emph{fine} log structure. For a concrete analysis in a two dimensional case, where the monodromy around the discriminant locus is a Dehn twist, see Examples \ref{Expl: Focus-focus singularity I} and \ref{Expl: Focus-focus singularity II}. 

We note that the families obtained by smoothing toric log Calabi--Yau spaces can always be
chosen projective under weak assumptions on the gluing data \cite[Thm 2.34]{logmirror1}, and in fact, each family can be
deformed into a projective one, see \cite{theta}, Proposition~5.12 and
Theorem~A.7. The choice of a polarization is encoded in a multivalued piecewise linear (MPL) function $\varphi$ on $B$ (for a short overview on this see \cite[\S1.1]{RS2}).
%The smoothing of $X_0$ is
%obtained from initial combinatorial data encoded on an \emph{integral affine
%manifold with singularities} $B$ associated to $X_0$, referred to as the
%\emph{intersection complex} of $X_0$. Under certain assumptions on the
%intersection patterns of the irreducible components of a toric lof Calabi--Yau,
%$B$ is a topological space, which apart from a codim $2$ discriminant locus
%$\Delta \subset B$, admits an atlas with transition functions given by affine
%linear automorphisms.\Bernd{Ask yourself - is this sentence useful to give any
%idea to readers who don't already know it? What is the nature of $B$ and how
%does it relate to anything you have been talking about before?} Moreover, we
%impose\Bernd{That's not strictly true and would exclude many applications of the
%results in this paper.} $B$ to have \emph{simple} %singularities, described in
%\cite[Defn.~ 1.60]{logmirror1}, as an indecomposability condition on the local
%affine monodromy around the discriminant locus. 
 From the data of the MPL function $\varphi$ encoding a choice of a polarization, along with the gluing data $s$, we obtain an extension class in \eqref{extension-class} given by
\begin{equation}
\nonumber
\big(c_1(\varphi), \Arg(s)\big) \in \Ext^1(\Lambda,\ul\ZZ\oplus \ul U(1))=
H^1(B\setminus\shA, \check\Lambda)\oplus H^1(B\setminus\shA,
\check\Lambda\otimes U(1)).
\end{equation}
where $\Lambda$ and $\check\Lambda$ respectively denote the sheaf of integral tangent and cotangent vectors on $B$, and $c_1(\varphi)$ is the first Chern class of $\varphi$, as in Definition \ref{Def: The chern class}. Taking this extension and an
extension of $\Lambda$ by $\ul\ZZ$ as in \eqref{Eqn: Aff^* extension} as we obtain the following extension of $\Lambda$, fitting into the commutative diagram in \eqref{Eqn: X_0^KN as an extension}:
\begin{equation}
0 \lra \ul\ZZ\oplus\ul U(1) \lra \widehat\shP \lra \Lambda \lra 0
\end{equation}
Use the extension $\widehat\shP$ of $\Lambda$, we arrive at a canonical description of the Kato--Nakayama space $X^{KN}_0$, over a subset $B'\subset B$ covered by the interiors of the maximal cells and vertices of $\P$:
\begin{proposition}(= Proposition~\ref{Prop: X_0^KN versus Hom(widehat shP,U(1))})
There is a
canonical homeomorphism
\[
\Hom^\circ(\widehat\shP, \ul U(1))|_{B'} \stackrel{\simeq}{\lra}
X_0^\KN\big|_{B'}
\]
of topological fibre bundles over $B'$,  where $\Hom^\circ(\widehat\shP,\ul U(1)) \subset \Hom (\widehat\shP, \ul
U(1))$ is the space of fibrewise homomorphisms restricting to the
identity on $\ul U(1)\subset \widehat\shP$.
\end{proposition}

In the second part of the paper we study real structures in toric degenerations, and also obtain an analogous canonical description of real loci. In \S\ref{Sect: Real log spaces} we first introduce the straightforward notion
of a real structure on a log space $(X,\M_X)$, where $X$ is a real scheme with an anti-holomorphic involution
$\iota_{X} :X\to X$, as an
involution
\[
\tilde\iota_{X}= (\iota_X,\iota_X^\flat): (X,\M_X)\lra (X,\M_X)
\]
of log schemes over $\RR$ with underlying scheme-theoretic morphism
$\iota_{X}$. After treating some basic properties of real log schemes, we show that the real involution on a real log scheme lifts canonically to its
Kato-Nakayama space in \S~\ref{Sect: Real structure on KN}. Our primary interest in this paper are real structures in a toric log Calabi--Yau space $X_0$ and their lift
to $X_0^\KN$. One of the reasons for being interested in real structures in this
context is that the real locus produces natural Lagrangian submanifolds on any
complex projective manifold defined over $\RR$. Assuming the analytic model (or  the analytification of)
$\shX$ is defined over $\RR$, it comes with a natural family of degenerating
Lagrangian submanifolds. Again we can study these Lagrangians by means of their
analogues in $X_0^\KN$. We show that the real locus $X^\KN_\RR\subset X^\KN$ defines real structures in toric degenerations, by showing that the fibre bundle
structure for $\delta^\KN: X_0^\KN\to S^1$ can be chosen to respect the real
locus $X_{0,\RR}^\KN\subset X_0^\KN$:
\begin{proposition}(= Proposition \ref{Prop: pairs})
The fibre bundle
structure for $\delta^\KN: X_0^\KN\to S^1$ can be chosen to respect the real
locus $X_{0,\RR}^\KN\subset X_0^\KN$ in the sense that the local trivializations
identify the real subset with a submanifold of the fibres.
\end{proposition}

In \S\ref{Subsect: X_0^KN real}, we further investigate standard real structures on toric log Calabi--Yau spaces, and obtain a necessary and sufficient condition for the existence of a real structure on a toric log Calabi Yau space:
\begin{proposition}(= Proposition \ref{Prop: Gluing data and slab function for the standard real structure})
Let $(X_0,\M_{X_0})$ be a polarized toric log Calabi-Yau space with intersection complex
$(B,\P)$. Then there is a standard real structure on
$(X_0,\M_{X_0})$ if and only if 
there exist lifted open gluing data $s=(s_v)_{v\in \tau}$ with
$X_0\simeq X_0(B,\P,s)$ such that $s_v \in \check{\Lambda}_\sigma \otimes_\ZZ \mathbb{R}^*$.
\end{proposition}
We refer to the gluing data $s\in \check{\Lambda}_\sigma \otimes_\ZZ \mathbb{R}^*$ as \emph{real lifted open gluing data}. %This data parametrizes real toric log Calabi--Yau structures. Recall that the log structure particularly captures the restriction of $f$ to $x=y=0$ in \eqref{Eq: local eqn}, at a general point of the singular locus of $X_0$. By \cite[Thm 3.27]{logmirror1}, specifying $f$ in \eqref{Eq: local eqn} is equivalent to locally specifying \emph{slab functions} $f_\rho\in \RR[\Lambda_\rho]$ attached to codim $1$ cells $\rho$ of a polyhedral decomposition $\mathcal{P}$ on $B$ \cite[Defn.~ 2.17]{affinecomplex}. We show in Proposition~\ref{Prop: real slab functions} that $(X_0,\M_{X_0})$ is defined by real lifted open gluing data if and only if the slab functions $f_\rho\in \CC[\Lambda_\rho]$
%describing the log structure $\M_{X_0}$ are
%defined over $\RR$, that is $f_\rho\in \RR[\Lambda_\rho]$ for
%any $\rho\in\P$ of codimension one and $v\in\rho$ a vertex.
One of our main results in this article is the following theorem, which provides a canonical description of the Kato--Nakayama space of toric log Calabi--Yau space defined by real lifted open gluing data.
\begin{theorem}(=  Theorem \ref{Thm: X_{0,RR}^KN in terms of extensions})
Let $(X_0,\M_{X_0})$ be a toric log Calabi-Yau space defined by real lifted open gluing data. Then the real locus in $X_0^\KN$ over $B'\subset B$ is given by
\[
X_{0,\RR}^\KN = \Hom^{\circ}(\widehat\shP,\{\pm1\})|_{B'} \subset
\Hom^{\circ}(\widehat\shP,U(1))|_{B'}.
\] 
where $\Hom^{\circ}(\widehat\shP,U(1))|_{B'}$ is defined as in Proposition~\ref{Prop: X_0^KN versus Hom(widehat shP,U(1))}.
\end{theorem} 
Theorem \ref{Thm: X_{0,RR}^KN in terms of extensions} apriori describes the real locus as a section over $B'\subset B$. We discuss the  necessary criteria to extend the real locus as a continuous section over $B$ in Proposition \ref{Prop: Topological torus fibrations}. Analysing further the topology of the Kato--Nakayama space $X_0^\KN$, for $X_0$ is defined by real lifted open gluing data, in Proposition \ref{Prop: real locus is a branched cover of B} we prove that the restriction of $\mu^\KN: X_0^\KN\to B$ to the real locus
exhibits $X_{0,\RR}^\KN$ as a surjection with finite fibres. Over
$B\setminus \shA$, this map is a topological covering map with
fibres of cardinality $2^{n+1}$. As a concrete example we study in \S\ref{Subsect: quartic
K3} real loci in a real toric degeneration of quartic $K3$ surfaces. For a specific choice of the gluing data we reproduce a
result of Casta\~no-Bernard and Matessi \cite{Castano/Bernard-Matessi} on the
topology of the real locus of an SYZ-fibration with compatible real involution
in our setup. We also investigate a non-trivial real structure on the local $\PP^2$.

\textbf{Related work:} 
The real locus in algebraic varieties is studied extensively in the mathematics literature from different perspectives  \cite{Delaunay,MJT,DK,Huisman,N,NS}. Of particular interest is to understand the cohomology groups of real loci. In \cite{Castano/Bernard-Matessi}, \S3 the mod $2$ cohomology of real loci is also studied via a spectral sequence relating it to Hodge groups, see  in
connection with \cite{logmirror2}. In \cite{AP}, in our joint work with Thomas Prince, we continue this study from the point of view
developed here, in the case of Calabi--Yau threefolds. Among other things, the rank of the mod $2$ first cohomology
group for a toric degeneration of quintic threefolds turns out to be $101$,
agreeing with the Hodge number $h^{1,1}$. These observations indicate
deeper meaning of our real Lagrangians in the study of Hodge theoretic mirror symmetry, which is investigated in \cite{AP2}. The study of real loci is also essential from the point of view of Gromov--Witten theory and enumerative real algebraic geometry. Once we have a real Lagrangian $L\subset \shX_t$, a holomorphic disc with
boundary on $L$ glues with its complex conjugate to a rational curve $C\subset
\shX_t$ with a real involution. Real rational curves are amenable to techniques
of algebraic geometry and notably of log Gromov-Witten theory of the central
fibre $X_0$. Thus real Lagrangians provide an algebraic-geometric path to open
Gromov-Witten invariants and the Fukaya category. See \cite{Solomon,PSW,FOOO,georgieva} for some previous work in this direction
without degenerations. We also note that another very influential method of producing algebraic varieties with
real structures combinatorially is Viro's patchworking method \cite{Viro}. This
method produces real algebraic hypersurfaces inside toric varieties with
prescribed topology. Bounds on the mod 2 Betti numbers of the real locus for such varieties have
been studied by Bihan, Itenberg, and Viro \cite{B,I,IV} and, very recently, by Renaudineau–
Shaw \cite{RS} via the introduction of a real analog of tropical homology groups \cite{IKMZ}. Our method here is substantially different in that our
varieties typically do not embed as hypersurfaces in toric varieties. On the
downside, our setup is restricted to the toric degenerations treated in
\cite{affinecomplex} and hence only produces varieties with effective
anti-canonical divisor. A certain overlap of the two methods concerns
hypersurfaces in toric varieties with effective anticanonical bundle. 
\noindent 

It is natural to ask if the Kato–Nakayama space of the central fiber of a toric degeneration, can be
obtained as a topological Calabi–Yau compactification as in \cite{RZ1}. This is further investigated in \cite{RZ2}. 

\emph{Acknowledgements}: This work was part of my PhD thesis, which would not be possible without the support of my adviser Bernd Siebert. I am also indebted to Mark Gross for many useful conversations; a major part of this paper was written during a visit to the University of Cambridge hosted by him. I am grateful to Tom Coates and Dimitri Zvonkine for their useful suggestions which improved the exposition of the paper. Finally, many thanks to an anonymous referee for their very useful feedback and corrections. This project has
received funding from the ``Research Training Group 1670 Mathematics inspired by String Theory'' at Universit\"at Hamburg funded by
the Deutsche Forschungsgemeinschaft (DFG), the European Research Council (ERC) under the European Union’s
Horizon 2020 research and innovation programme (grant agreement No. 682603), and from Fondation Math\'ematiques Jacques Hadamard.

\emph{Conventions.}
We work in the category of complex analytic spaces.\footnote{
The discussions in \S\ref{Sect: Real log spaces} on real structures in log
geometry and in \S\ref{Subsect: X_0^KN real} on real toric degenerations also
hold in the category of schemes over $\RR$.} For $R$ a finitely generated
$\CC$-algebra we write $\Specan R$ for the analytic space associated to the
complex scheme $\Spec R$.

For $a= r e^{i\varphi}\in\CC\setminus\{0\}$ we denote by $\arg(a)=\varphi\in
\RR/2\pi i\ZZ$ and by $\Arg(a)= e^{i\varphi}= a/|a|$. The unit disc in $\CC$ is
denoted $D$.
\bigskip

%===========================================================
%===========================================================

\section{{Kato-Nakayama spaces with a view toward toric degenerations}}
\label{Sect: Kato-Nakayama space}
%===========================================================

\subsection{Generalities on Kato-Nakayama spaces}

Throughout this section we assume basic familiarity with log geometry at the level
of \cite{Fkato}. Logarithmic structures in the analytic category have been
introduced in \cite[Def.1.1.1]{KN}. For more details we encourage the
reader to also look at \cite{Kato,Ogus}. The structure homomorphism of a
log space $(X,\M_X)$ is denoted $\alpha_X:\M_X\to \O_X$, or just $\alpha$ if $X$
is understood. The standard log point $(\Spec \CC, \NN\oplus\CC^\times)$ is
denoted $O^\ls$.

To any log analytic space $(X,\M_X)$, Kato and Nakayama in \cite{KN}
have associated functorially a topological space $(X,\M_X)^\KN$, its
\emph{Kato-Nakayama space} or \emph{Betti-realization}. We review this
definition and its basic properties first before discussing the additional
properties coming from a real structure. Denote by $\Pi^\ls= (\Specan\CC,
\M_\Pi)$ the \emph{polar log point}, with log structure
\[
\alpha_\Pi:\M_{\Pi,0}= \RR_{\ge 0}\times U(1)\lra \CC,\quad
(r,e^{i\varphi})\longmapsto r\cdot e^{i\varphi}.
\]
The map $\Pi^\ls\to \Specan\CC$ \ChB{forgetting the logarithmic structure} makes
$\Pi^\ls$ into a log space over $\CC$. Note $\ol\M_{\Pi,0} = U(1)$, so this log
structure is not fine. As a set define
\[
(X,\M_X)^\KN:= \Hom\big(\Pi^\ls, (X,\M_X)\big),
\]
the set of morphisms of complex analytic log spaces $\Pi^\ls\to (X,\M_X)$. Note
that a log morphism $f: \Pi^\ls \to (X,\M_X)$ is given by (a) its
set-theoretic image, a point $x=\varphi(0)\in X$, and (b) a monoid
homomorphism $f^\flat:\M_{X,x}\to \RR_{\ge 0}\times U(1)$. Forgetting the monoid
homomorphism thus defines a map of sets
\begin{equation}\label{Eqn: pi}
\pi: (X,\M_X)^\KN\lra X.
\end{equation}
We endow $(X,\M_X)^\KN$ with the following topology. A local section
\ChB{$s\in\Gamma(U,\M_X)$,} $U\subset X$ open, defines a map
\begin{equation}\label{Eqn: ev_s}
\ev_s: \pi^{-1}(U)\lra \RR_{\ge 0}\times U(1),\quad
f\longmapsto f^\flat\circ s. 
\end{equation}
As a subbasis of open sets on $(X,\M_X)^\KN$ we take
$\ev_s^{-1}(V)$, for any $U\subset X$ open,
\ChB{$s\in\Gamma(U,\M_X)$} and $V\subset \RR_{\ge0}\times U(1)$ open.
The forgetful map $\pi$ is then clearly continuous.

If the log structure is understood, we sometimes write $X^\KN$
instead of $(X,\M_X)^\KN$ for brevity.

\begin{remark}
The following more explicit set-theoretic description of
$(X,\M_X)^\KN$ is sometimes useful. A log morphism
$f:\Pi^\ls\to (X,\M_X)$ with $f(0)=x$ is equivalent to a choice of
monoid homomorphism $f^\flat$ fitting into the commutative
diagram
\[
\begin{CD}
\M_{X,x}@>{f^\flat=(\rho,\theta)}>> \RR_{\ge0}\times
U(1)\\
@V{\alpha_{X,x}}VV @VV{\alpha_\Pi}V\\
\O_{X,x}@>>{\ev_x}>\CC
\end{CD}
\]
Here $\alpha_{X,x}$ is the stalk of the structure morphism
$\alpha_X:\M_X\to\O_X$ of $X$ and $\ev_x$ takes the value of a
holomorphic function at the point $x$. This diagram implies that the
first component $\rho$ of $f^\flat$ is determined by $x$ and the structure
homomorphism by \ChB{the equation}
\[
\rho(s)= \big|\big(\alpha_{X,x}(s)\big)(x)\big|.
\]
Thus giving $f$ is equivalent to selecting the point $x\in X$ and a
homomorphism $\theta:\M_{X,x} \to U(1)$ with the property that for
any $s\in \M_{X,x}$ it holds
\[
(\alpha_{X,x}(s))(x) = \big|(\alpha_{X,x}(s))(x)\big|\cdot
\theta(s).
\]
Since both sides vanish unless $s\in \O_{X,x}^\times\subset \M_{X,x}$, this
last property needs to be checked only on invertible elements. Note also that a
homomorphism $\M_{X,x}\to U(1)$ extends to $\M_{X,x}^\gp$ since $U(1)$ is an
abelian group. Summarizing, we have a canonical identification
\begin{equation}\label{Eqn: (x,theta) description of KN space}
(X,\M_X)^\KN= \Big\{(x,\theta)\in\textstyle \coprod_x
\Hom(\M^\gp_{X,x},U(1)) \,\Big|\, \forall h\in\O_{X,x}^\times:
\theta(h)=\frac{h(x)}{|h(x)|}\Big\}
\end{equation}
In this description we adopt the occasional abuse of notation of
viewing $\O_{X,x}^\times$ as a submonoid of $\M_{X,x}$ by means of
the structure homomorphism $\M_{X,x}\to \O_{X,x}$. \ChB{Given $s\in\M_{X,x}$ then by} \eqref{Eqn: (x,theta) description of KN space} any point
$f\in X^\KN$ over $x\in X$ defines an element $\theta(s)\in U(1)$. We
refer to this element of $U(1)$ as the \emph{phase} of $s$ at $f$. If
$s\in\O^\times_{X,x}$ then the phase of any point of $X^\KN$ over
$x$ agrees with $\Arg(s)= e^{i\arg(s)}$.
\end{remark}
Next we give an explicit description of $(X,\M_X)^\KN$ assuming the log
structure has a global chart with a fine (i.e. finitely generated and integral) monoid. For a fine and saturated monoid
$P$, we have $P^\gp\simeq T\oplus\ZZ^r$ with $T$ finite. Thus the set
$\Hom(P^\gp, U(1))$ is in bijection with $|T|$ copies of the real torus $U(1)^r$
by means of choosing \ChB{a basis of $\ZZ^r$.} This identification is compatible with the
topology on $\Hom(P^\gp, U(1))$ defined by the subbasis of topology consisting
of the sets
\begin{equation}\label{Eqn: V_p}
V_p:=\big\{ \varphi\in\Hom(P^\gp,U(1))\,\big|\, \varphi(p)\in V\big\},
\end{equation}
for all $V\subset U(1)$ open and $p\in P^\gp$.

\begin{proposition}
\label{Prop: KN space from chart}
Let $P$ be a fine monoid and  let $X$ be an analytic space endowed
with the log structure defined by a holomorphic map $g: X\to\Specan\CC[P]$.
Then there is a canonical closed \ChB{topological} embedding of
$(X,\M_X)^\KN$ into $X\times \Hom(P^\gp, U(1))$ with image
\[
\Big\{ (x,\lambda)\in X\times \Hom(P^\gp,U(1))\,\Big|\,
\forall p\in P:\, g^\sharp(z^p)_x\in \O^\times_{X,x} \Rightarrow
\lambda(p)=\Arg(g^\sharp(z^p)(x)) \Big\}.
\]
\end{proposition}

\begin{proof}
Denote by $\beta: P\to \Gamma(X,\M_X)$ the chart given by $g$ and by
$\beta_x: P\to \M_{X,x}$ the induced map to the stalk at $x\in X$. 
Recall the description \eqref{Eqn: (x,theta) description of KN
space} of $(X,\M_X)^\KN$ by pairs $(x,\theta)$ with $x\in X$ and
$\theta:\M_{X,x}^\gp\to U(1)$ a group homomorphism extending
$h\mapsto h(x)/|h(x)|$ for $h\in \O_{X,x}^\times$. With this
description, the canonical map in the statement is
\[
\Psi: (X,\M_X)^\KN\lra X\times \Hom(P^\gp, U(1)),\quad
(x,\theta)\longmapsto \big(x, \theta\circ\beta^\gp_x \big).
\]
Here $\beta_x^\gp: P^\gp\to \M_{X,x}^\gp$ is the map induced by
$\beta_x$ on the associated groups.

To prove continuity of $\Psi$, let $p\in P^\gp$ and $V\subset U(1)$ be open.
Then $\Psi^{-1}(X\times V_p)$ with $V_p\subset \Hom(P^\gp,U(1))$ the basic
open set from \eqref{Eqn: V_p}, equals $\ev_{\beta^\gp(p)}^{-1}(\RR_{\ge
0}\times V)$, with $\ev_{\beta^\gp(p)}$ defined in \eqref{Eqn: ev_s}. By the
definition of the topology, $\Psi^{-1}(X\times V_p)\subset (X,\M_X)^\KN$ is thus
open. Continuity of the first factor $\pi$ of $\Psi$ being trivial, this shows
that $\Psi$ is continuous.

We next check that $\im(\Psi)$ is contained in the closed subset of
$X\times\Hom(P^\gp, U(1))$ stated in the assertion. Let $(x,\theta)\in
(X,\M_X)^\KN$. For $p\in P$, the required equation $g^\sharp(z^p)(x)=
\lambda(p)\cdot \big|g^\sharp(z^p)(x)\big|$ for $\lambda=
\theta\circ\beta_x^\gp$ is non-trivial only if
$h:=g^\sharp(z^p)\in\O^\times_{X,x}$. In this case, $\beta_x(p)$ maps to $h$
under the structure homomorphism $\M_{X,x}\to \O_{X,x}$ and hence
\[
\big(\theta\circ\beta_x(p)\big)(x)
=\frac{h(x)}{\big|h(x) \big|} = \frac{g^\sharp(z^p)(x)}{\big|
g^\sharp(z^p)(x)\big|},
\]
verifying the required equality.

Conversely, assume $(x,\lambda)\in X\times\Hom(P^\gp, U(1))$ fulfills
\begin{equation}\label{Eqn: condition for lambda}
g^\sharp(z^p)(x)= \lambda(p)\cdot \big|g^\sharp(z^p)(x)\big|,
\end{equation}
for all $p\in P$. Denote by $\alpha: \M_X\to \O_X$ the structure
homomorphism. Then $\M_{X,x}$ fits into the cocartesian diagram of monoids
\[
\xymatrix{
\beta_x^{-1}(\O_{X,x}^{\times}) \ar@{^{(}->}[r] \ar[d]_{\alpha_x\circ \beta_x}
& P \ar[d]^{\beta_x} \\
\O_{X,x}^{\times} \ar[r]  & \shM_{X,x}}.
\]
By the universal property of the fibered coproduct there exists a unique map $\phi \in \Hom(\shM_{X,x},S^1)$, with $\phi(h)=\frac{h(x)}{|h(x)|}$ for $h \in \mathcal{O}_{X,x}^{\times}$, fitting into the following commutative diagram:
\[
\xymatrix@C=30pt
{\beta^{-1}(\mathcal{O}_{X,x}^{\times}) \ar@{^{(}->}[r] \ar[d]^{\alpha_x \circ \beta_x}& P\ar[d]\ar@/^/[ddr]^{\lambda}&\\
\mathcal{O}_{X,x}^{\times} \ar[r]\ar@/_/[drr]^{\Arg \circ \operatorname{ev}_x}&\shM_{X,x}
\ar@{-->}[rd]^{\phi}&\\
&& U(1)
}
\]
The map $\phi$ defines a unique element in $\theta:\M_{X,x}^\gp\to U(1)$,
%Consider the pair of homomorphisms $\Arg\circ\ev_x:
%\O_{X,x}^\times\to U(1)$, and $\lambda: P\to U(1)$, with $\ev_x$
%evaluation at $x$. In view of  $(\alpha\circ\beta)(p)=
%g^\sharp(z^p)$, Equation~ \eqref{Eqn: condition for lambda} says
%precisely that the compositions of these two \ChB{homomorphisms} with the maps from
%$\beta_x^{-1}(\O_{X,x}^\times)$ agree. By the universal property of
%fibred sums we thus obtain a homorphism $\M_{X,x}\to U(1)$. Define
%$\theta:\M_{X,x}^\gp\to U(1)$ 
as the induced map on associated
groups. For $h\in \O_{X,x}^\times$ it holds $\theta(h)= h(x)/|h(x)|$
and hence $(x,\theta)\in (X,\M_X)^\KN$. It is now not hard to see
that the map $(x,\lambda)\mapsto (x,\theta)$ is inverse to $\Psi$
and continuous as well.
\end{proof}

%===========================================================

\subsection{Examples of Kato-Nakayama spaces}

We \ChB{now} discuss a few examples of Kato-Nakayama spaces, geared
toward toric degenerations. Unless otherwise stated, $N$ denotes a
finitely generated free abelian group, $M=\Hom(N,\ZZ)$ its dual and
$N_\RR$, $M_\RR$ are the associated real vector spaces. If
$\sigma\subset N_\RR$ is a cone then the set of monoid homomorphisms
$\sigma^\vee= \Hom(\sigma,\RR_{\ge0}) \subset M_\RR$ denotes its
dual cone. A \emph{lattice polyhedron} is the intersection of
rational half-spaces in $M_\RR$ with an integral point on each
minimal face.

The basic example is a canonical description of the Kato-Nakayama space of a
toric variety defined by a momentum polytope. We use a rather liberal definition
of a momentum map \ChB{in Definition~\ref{Def: Momentum map} below that makes}
no reference to a symplectic structure.

\ChB{We first recall the notion of \emph{positive real locus} of a toric
variety, see e.g.\ \cite[\S4.1]{Fulton}.} Let $X$ be the complex projective toric
variety associated to a full-dimensional, convex lattice polyhedron $\Xi\subset
M_\RR$. A basic fact of toric geometry states that the fan of $X$ agrees with
the normal fan $\Sigma_\Xi$ of $\Xi$. From this description, $X$ is covered by
affine toric varieties $\Specan (\CC[\sigma^\vee\cap M])$, for $\sigma\in
\Sigma_\Xi$. Since the patching is monomial, it preserves the real structure of
each affine patch. Hence the real locus $\Hom(\sigma^\vee,\RR)\subset
\Hom(\sigma^\vee,\CC)$ of each affine patch glues to the real locus
$X_\RR\subset X$. Unlike in the definition of $\sigma^\vee$, here $\RR$ and
$\CC$ are multiplicative monoids. Moreover, inside the real locus of each affine
patch there is the distinguished subset
\[
\sigma=\Hom(\sigma^\vee,\RR_{\ge 0})\subset\Hom(\sigma^\vee,\RR),
\]
with ``$\Hom$'' referring to homomorphisms of monoid. These also
patch via monomial maps to give the \emph{positive real locus}
$X_{\ge0}\subset X_\RR$.

\ChB{With the positive real locus defined,} we are now in position to define abstract
momentum maps.

\begin{definition}
\label{Def: Momentum map}
Let $X$ be the complex toric variety defined by a full-dimensional lattice
polyhedron $\Xi\subset M_\RR$. Then a continuous map
\[
\mu: X\lra \Xi
\]
is called an \emph{(abstract) momentum map} if the following holds.
\begin{enumerate}
\item
$\mu$ is invariant under the action of $\Hom(M,U(1))$ on $X$.
\item
The restriction
of $\mu$ maps $X_{\ge0}$ homeomorphically \ChB{onto} $\Xi$, thus defining a
section $s_0: \Xi\to X$ of $\mu$ with image $X_{\ge0}$.
\item
The map
\begin{equation}
\label{Eqn: torus times section}
\Hom(M,U(1))\times \Xi\lra X,\quad
(\lambda,x)\longmapsto \lambda\cdot s_0(x)
\end{equation}
induces a homeomorphism $\Hom(M,U(1))\times \Int(\Xi) \simeq
X\setminus D$, where $D\subset X$ is the toric boundary
divisor.
\end{enumerate}
\end{definition}

Projective toric varieties have a momentum map, see e.g.\ \cite[\S4.2]{Fulton}.
For an affine toric variety $\Specan(\CC[P])$, momentum maps also exist. One
natural construction discussed in detail in \cite[\S1]{NO}, is a simple formula
in terms of generators of the toric monoid $P$ \cite[Def.1.2 and Thm.1.4]{NO}.
Some work is however needed to show that if
$P=\sigma^\vee\cap M$, then the image of this momentum map is the cone
$\sigma^\vee$ spanned by $P$. We give here another, easier but somewhat ad hoc
construction of a momentum map in the affine case.
\medskip

Our next result concerns the announced canonical description of the
Kato-Nakayama space of a toric variety with a momentum map.

\begin{proposition}
\label{Prop: KN of toric variety}
Let $X$ be a complex toric variety with a momentum map
$\mu: X\to \Xi\subset M_\RR$ and let $\M_X$ be the toric log
structure on $X$. Then the map \eqref{Eqn: torus times section}
factors through a canonical homeomorphism
\[
\Phi: \Xi\times \Hom(M, U(1))\lra (X,\M_X)^\KN.
\]
\end{proposition}

\begin{proof}
The toric variety $X$ is covered by open affine sets of the form $\Specan(\CC[P])$
with $P^\gp= M$, and these define charts for the log
structure. Thus the local description of $X^\KN$ in Proposition~\ref{Prop: KN
space from chart} as a closed subset globalizes to define a closed \ChB{topological}
embedding
\[
\iota: X^\KN\lra X\times \Hom(M, U(1)).
\]
With $s_0:\Xi\to X$ the section of $\mu$ with image $X_{\ge0}\subset X$,
consider the continuous map
\begin{eqnarray}
\nonumber
% \label{Eqn: Phi}
\Phi: \Xi\times \Hom(M,U(1)) & \lra & X\times\Hom(M,U(1)) \\
\nonumber
(a,\lambda) & \longmapsto & \big(\lambda\cdot s_0(a), \lambda )
\nonumber
\end{eqnarray}
%\[
%\Phi: \Xi\times \Hom(M,U(1))\lra X\times\Hom(M,U(1)),\quad
%(a,\lambda)\longmapsto \big(\lambda\cdot s_0(a), \lambda).
%\]
Here $\lambda\in \Hom(M, U(1))$ acts on $X$ as an element of the
algebraic torus $\Hom(M,\CC^\times)$. The map $\Phi$ has the
continuous left-inverse \ChB{$\mu\times\id$.} Thus to finish the proof it
remains to show $\im(\Phi)=\im(\iota)$. \ChB{We may check this equation on
$U\times\Hom(M,U(1))$ for $U=\Specan \CC[\sigma^\vee\cap M]$ toric affine open
subsets of $X$. Note that $\Phi^{-1}\big(U\times \Hom(M,U(1))\big)$ is the
product with $\Hom(M,U(1))$ of the open dense subset $\mu(U)\subset \Xi$.}

Indeed, according to Proposition~\ref{Prop: KN space from chart},
$(x,\lambda)\in X\times \Hom(M, U(1))$ lies in $\iota(X^\KN)$ iff for all \ChB{$m\in
\sigma^\vee\cap M$} it holds $z^m(x)=\lambda(m)\cdot |z^m(x)|$. But
this equation holds if and only if $x=\lambda \cdot s_0(a)$ for $a=\mu(x)$
since $ s_0(a)\in X_{\ge0}$ implies
\[
z^m(\lambda\cdot s_0(a)) =
\lambda(m) \cdot z^m( s_0(a)) =
\lambda(m)\cdot|z^m(x)|.
\]
Thus $(x,\lambda)\in X^\KN$ iff $(x,\lambda) = (\lambda\cdot s_0(\mu(x)),
\lambda)$, that is, iff $(x,\lambda)\in \im(\Phi)$.
\end{proof}

\begin{remark}
The left-hand side in the statement of Proposition~\ref{Prop: KN of
toric variety} can also be written $T^*_\Xi/\check\Lambda$ where
$\check\Lambda\subset T_\Xi^*$ is the local system of integral cotangent
vectors. Indeed, for any $y\in \Xi$ we have the sequence of
canonical isomorphisms
\[
T^*_{\Xi,y}/ \check\Lambda_y \lra \Hom(M,\RR)/\Hom(M,\ZZ)=
\Hom(M,\RR/\ZZ)= \Hom(M, U(1)).
\]
\end{remark}

\begin{example}
Let $X= \AA^1= \Specan(\CC[\NN])$ be endowed with the divisorial log structure
$\shM_{(X,\{0\})}$. \ChB{For $z$ the toric coordinate, $|z|$} defines a momentum
map $\mu: X\to \RR_{\ge0}$ \ChB{in the sense of Definition~\ref{Def: Momentum
map}}. By Proposition~\ref{Prop: KN of toric variety}, \ChB{there exists a
canonical homeomorphism}
\[
X^{\mathrm{KN}} \cong \RR_{\geq 0} \times S^1.
\]
The map $\pi: X^{\mathrm{KN}}\to X$ is a homeomorphism onto the image over
$\AA^1\setminus\{0\}$ and has fibre $S^1=\Hom(\M_{(X,\{0\})},U(1)) = \Hom(\NN,
U(1))$ over $0$. Thus $X^{\mathrm{KN}}$ is homeomorphic to the \emph{oriented
real blow up} of $\AA^1$ at $0$ \cite{Gillam}.

\begin{figure}
\center{\input{BlowUp.pspdftex}}
\caption{An oriented real blow-up of $\AA^1$ at the origin}
\label{Fig: KN A1}
\end{figure}

More generally, let $(X,\M_{(X,D)})$ be the divisorial log structure on a
complex scheme $X$ with a normal crossings divisor $D\subset X$. Then the
Kato-Nakayama space $X^{\mathrm{KN}}$ of $X$ can be identified with the oriented
real blow up of $X$ along $D$. At a point $x\in X$ the map $X^\KN\to X$ has
fibre $(S^1)^k$ with $k$ the number of irreducible components of $D$ containing
$x$. For details of this construction we refer to \cite{Gillam}.
\end{example}

\begin{example}
\label{P2 divisorial}

Let $X= \mathbb{P}^2$ with the toric log structure. There exists a momentum map
$\mu:\PP^2 \rightarrow \Xi$ with
\[
\Xi=\conv\{(0,0),(1,0),(0,1)\} \subset M_\RR
\]
the $2$-simplex and $M=\ZZ^2$. The momentum map exhibits the algebraic torus
$(\CC^{\times})^2 \subset \PP^2$ as a trivial $(S^1)^2$-bundle over $\Int\Xi$.
Intrinsically, the $2$-torus fibres of $\mu$ over $\Int(\Xi)$ are
$\Hom(M,U(1))$. Over a face $\tau\subset\Xi$, the $2$-torus fibre collapses via
the quotient map given by restriction,
\[
\Hom(M,U(1))\lra \Hom(M\cap T_\tau,U(1)),
\]
where $T_\tau\subset M_\RR$ is the tangent space of $\tau$.
The quotient yields an $S^1$ over the
interior of an edge of $\Xi$ and a point over a vertex.

Now going over to the Kato-Nakayama space simply restores the
collapsed directions, thus yielding the trivial product
\[
X^\KN=\Xi\times(S^1)^2.
\]
The fibre of $X^\KN\to X$ over the interior of a toric
stratum given by the face $\tau\subset\Xi$ are the fibres of
$\Hom(M,U(1))\to \Hom(M\cap T_\tau,U(1))$.

An analogous discussion holds for all toric varieties with a
momentum map.
\end{example}

We finish this section with an instructive non-toric example that
features a non-fine log structure. It discusses the most simple
non-toric example of a toric degeneration, the subject of Section~\ref{Sect:
Toric degenerations}.

\begin{example}
\label{Expl: Focus-focus singularity I}
Let $X= \Specan \big(\CC[x,y,w^{\pm1},t]/(xy-t(w+1))\big)$, considered as a holomorphic
family of complex surfaces
\[
\delta: X\to\CC
\]
via projection by $t$. For fixed
$t\neq0$ we can eliminate $w$, which is invertible, to arrive at $\delta^{-1}(t)\simeq \CC^2 \setminus \{ xy=0\}$. For
$t=0$ we have
\[
\delta^{-1}(0)= (\CC\times\CC^*)\amalg_{\CC^*} (\CC\times\CC^*),
\]
two copies of $\CC\times\CC^*$ with coordinates $x,w$ and
$y,w$, respectively, glued seminormally along $\{0\}\times\CC^*$. Denote
$X_0=\delta^{-1}(0)$, let $\M_X=\M_{(X,X_0)}$ be the log structure defined by
the family and $\M_{X_0}$ its restriction to the fibre over $0$. Then
$(X_0,\M_{X_0})$ comes with a log morphism $f$ to the standard log point $O^\ls=
(\mathrm{pt}, \CC^\times\oplus\NN)$. We want to discuss the Kato-Nakayama space
of $(X_0,\M_{X_0})$ together with the map to $S^1$, the Kato-Nakayama space of
$O^\ls$.

First note that $X$ has an $A_1$-singularity at the point $p_0$ with coordinates
$x=y=t=0$, $w=-1$. Any Cartier divisor at $p_0$ with support contained in $X_0$
is defined by a power of $t$. Hence $\ol\M_{X_0,p_0}=\NN$, while at a general
point $p$ of the double locus $(X_0)_\sing\simeq\CC^*$, the central fibre is a
normal crossings divisor in a smooth space and hence $\ol\M_{X_0,p}=\NN^2$. In
particular, $\ol\M_{X_0}$ is not a fine sheaf at $p_0$. On the other hand
$(X_0,\M_{X_0})$ is a typical example of Ogus' notion of \emph{relative
coherence}. In this category, the main result of \cite{NO} still says that
$X^\KN$ is homeomorphic relative $(\CC,\M_\CC)^\KN = S^1\times\RR_{\ge0}$ to
$X_0^\KN\times\RR_{\ge0}$. In particular, the fibre of $f^\KN: X_0^\KN\to
S^1=(O^\ls)^\KN$ over $e^{i\phi}\in S^1$ is homeomorphic to $\CC^2 \setminus \{ xy=0\}$. We want to
verify this statement explicitly.

As a matter of notation, we write $s_x,s_y,s_t$ for the sections
of $\M_X$ or of $\M_{X_0}$ defined by the monomial functions
indicated by the subscripts. We also use $s_t$ to denote the
generator of the log structure $\M_{O^\ls}$ of $O^\ls$.

Since $s_t$ generates $\M_{X_0,p_0}$ as a log structure,
according to \eqref{Eqn: (x,theta) description of KN space} the
fibre of $\pi:X_0^\KN\to X_0$ over $p_0$ is a copy of $U(1)$, by
mapping $\theta\in \Hom(\M_{X_0,p_0}^\gp,U(1))$ to its value on
$s_t$. The projection to $S^1=(O^\ls)^\KN$ can then be viewed
as the identity.

On the complement of $p_0$ the log structure is fine, but there is
no global chart. We rather need two charts, \ChB{which we take to be}
\begin{eqnarray*}
U&=&X_0\setminus (x=y=0) = \Specan \big(\CC[x^{\pm1}, w^{\pm 1}]\times
\CC[y^{\pm1},w^{\pm 1}]\big)\simeq (\CC^*)^2\amalg (\CC^*)^2\\
V&=&X_0\setminus(w=-1) =
\Specan\big(\CC[x,y,w^{\pm1}]/(xy)\big)_{w+1},
\end{eqnarray*}
respectively. The charts are as follows:
\begin{eqnarray*}
\varphi: \NN&\lra&  \Gamma(U,\M_{X_0}),\quad
\varphi(1)=s_t.\\
\psi: \NN^2&\lra& \Gamma(V,\M_{X_0}),\quad
\varphi(a,b)= s_x^a\cdot s_y^b.
\end{eqnarray*}
Proposition~\ref{Prop: KN space from chart} now exhibits $U^\KN$,
$V^\KN$ as closed subsets of $U\times U(1)$ and $V\times U(1)^2$,
respectively. In each case, the projections to the $U(1)$-factors are
defined by evaluation of $\theta\in \Hom(\M_{X_0,x}^\gp, U(1))$ on
monomials. We write these $U(1)$-valued functions defined on open
subsets of $X_0^\KN$ by $\theta_t,\theta_x,\theta_y,\theta_w$
according to the corresponding monomial. Since $f: (X_0,\M_{X_0})\to
O^\ls$ is strict over $U$, we have $U^\KN= U\times U(1)$ with
$f^\KN= \theta_t$ the projection to $U(1)$. For $V^\KN$, over the
double locus $x=y=0$ the fibre of the projection $V^\KN\to V$ is all
of $U(1)^2$, while for $x\neq 0$ the value of $\theta_x$ is
determined by $\arg x$. An analogous statement holds for $y\neq0$.

To patch the descriptions of $X_0^\KN$ over the two charts amounts
to understanding the map $V^\KN\to (O^\ls)^\KN= U(1)$, the image
telling the value of $\theta\in \Hom(\M_{X_0,x}^\gp,U(1))$ on $s_t$.
Over $V=X_0\setminus(w=-1)$ we have the equation $s_t= (w+1)^{-1}
s_xs_y$. Thus, say over $x\neq 0$, \ChB{we obtain a} description of $V^\KN$
by the value $\theta_y$ of $\theta\in\Hom(\M_{X_0,x}^\gp,U(1))$ on $s_y$. Then 
\begin{equation}
\label{Eqn: Twisted gluing A_1}
\theta_t= \frac{\Arg(x)}{\Arg(w+1)}\cdot\theta_y.
\end{equation}
Thus the identification with $U^\KN$ is twisted both by the phases
of $x$ and of $w+1$. A similar description holds for $y\neq0$.

For $t=\tau e^{i\phi}\neq 0$ denote by
$X_0^\KN(e^{i\phi})$ the fibre over $e^{i\phi}\in U(1)
=(O^\ls)^\KN$ and similarly $U^\KN(e^{i\phi})$,
$V^\KN(e^{i\phi})$. It is now not hard to construct a
homeomorphism between $U^\KN(e^{i\phi})\cup
V^\KN(e^{i\phi})$ and $\ChB{\delta^{-1}(t)}\simeq
\CC^2 \setminus \{ xy=0\}$. For example, there exists a unique such
homeomorphism that on $(x=0)\subset U^\KN(e^{i\phi})$ restricts
to
\[
(\CC^*)^2\ni (y=se^{i\psi},w)\longmapsto
\left( \frac{(w+1)\cdot\tau e^{i(\phi-\psi)}}{
s+|(w+1)\tau|^{1/2}},  (s+|(w+1)\tau|^{1/2}) e^{i\psi} \right) \in
\CC^2,
\]
and to a similar map with the roles of $x$ and $y$ swapped on $(y=0)\subset
U^\KN(e^{i\phi})$. This form of the homeomorphism comes from considering the
degeneration $xy=(w+1)t$ as a family of normal crossing degenerations of curves
parametrized by $w=\textrm{const}$. Details are left to the reader.
\end{example}

\begin{example}
\label{Expl: Focus-focus singularity II}
An alternative and possibly more useful way to discuss the
Kato-Nakayama space of the degeneration $xy=(w+1)t$ in
Example~\ref{Expl: Focus-focus singularity I}, is in terms of closed
strata and the momentum maps of the irreducible components
$Y_1=(y=0)$, $Y_2=(x=0)$ of $X_0$ and of their intersection
$Z=Y_1\cap Y_2$. Endow $Y_1,Y_2, Z$ with the log structures making
the inclusions into $X_0$ strict. Away from $p_0$ we then have
global charts defined by $s_t,s_x$ for $Y_1$, by $s_t,s_y$ for $Y_2$
and by $s_x,s_y$ for $Z$. By functoriality, the fibre of $\pi:
X_0^\KN\to X_0$ over these closed strata $Y_1$, $Y_2$, $Z$ agrees
with $Y_1^\KN$, $Y_2^\KN$, $Z^\KN$, respectively. Therefore, we can
compute $X_0^\KN$ as the fibred sum
\[
X_0^\KN= Y_1^\KN\amalg_{Z^\KN} Y_2^\KN.
\]
Away from the singular point $p_0\in X_0$ of the log structure, $Y_1^\KN$ is the
Kato-Nakayama space of $Y_1$ as a toric variety times an additional $S^1$-factor
coming from $s_t$, and similarly for $Y_2$. Since each $Y_i$ has a momentum map
$\mu_i$ with image the half-plane $\RR_{\ge 0}\times\RR$, Proposition~\ref{Prop:
KN of toric variety} gives a description of $Y_i^\KN$ as
$\RR_{\ge0}\times\RR\times U(1)^3/\sim$ with the $U(1)$-factors telling the
phases of $w$, $s_t$ and of $s_x$ (for $i=1$) or of $s_y$ (for $i=2$),
respectively. We assume that the momentum map maps $p_0$ to $(0,0)\in
\RR_{\ge0}\times\RR$. Note that $w\neq 0$, so the phase of $w$ is already
determined uniquely at any point of $X_0$. The indicated quotient takes care of
the special point $p_0$ by collapsing a $U(1)$ over $(x,y,w)=(0,0,-1)$ as
follows. Restricting the projection
\[
\RR_{\ge0}\times\RR\times U(1)^3\lra Y_1
\]
to $x=0$ yields a $U(1)^2$-bundle over $\CC^*$, the $w$-plane. The two
$U(1)$-factors record the phases of $s_t$ and $s_x$, respectively. Now the
quotient collapses the second $U(1)$-factor over $w=-1$, reflecting the fact
that only $s_t$ survives in $\M_{X_0,p_0}$.

Again by functoriality, the restriction of either $Y_i^\KN$ to $\{0\}\times\RR$
yields $Z^\KN$. Using $s_x,s_y$ as generators for $\M_Z$ over
$Z\setminus\{p_0\}= \CC^*\setminus\{-1\}$ we see
\[
Z^\KN\ =\ \RR\times U(1)^3/\sim\ =\ \CC^*\times U(1)^2/\sim
\]
Now the three $U(1)$-factors tell the phases $\theta_w$, $\theta_x$, $\theta_y$
of $w$, $s_x$, $s_y$. In the description as $\CC^*\times U(1)^2/\sim$,
the equivalence relation collapses the $U(1)$-subgroup
\[
\big\{(\theta_x,\theta_y)\in U(1)^2\,\big|\,
\theta_x\cdot \theta_y=1 \big\}
\]
over $-1\in \CC^*$. Thus over the circle $|w|=a$ inside the double locus
$x=y=0$, $X_0^\KN$ is a trivial $U(1)^2$-bundle as long as $a\neq 1$, hence a
$3$-torus. This $3$-torus fibres as a trivial bundle of $2$-tori over
$(O^\ls)^\KN=S^1$. If $a=1$, one of the $U(1)$-factors collapses to a point over
$w=-1$, leading to a trivial family of pinched $2$-tori over $(O^\ls)^\KN=S^1$.

A nontrival torus fibration arises if we consider a neighbourhood of the double
locus. This is most easily understood by viewing $X_0^\KN$ as a torus fibration
over $\RR^2$ by taking the union of the momentum maps
\[
\mu: X_0\lra \RR^2,\quad
\mu|_{Y_1}=\mu_1,\quad
\mu|_{Y_2}=\kappa\circ\mu_2,
\]
with $\kappa(a,b)=(-a,b)$. Denote by $X_0^\KN(e^{i\phi})$ the fibre of $X_0^\KN\to
(O^\ls)^\KN= S^1$ over $e^{i\phi}\in S^1$. Write $\mu^\KN= \ChB{\mu\circ\pi}: X_0^\KN\to
\RR^2$ and $\mu^\KN(e^{i\phi})$ for the restriction to $X_0^\KN(e^{i\phi})$. For any
$(a,b)\in \RR^2\setminus \{(0,0)\}$ the fibre $(\mu^\KN)^{-1}(a,b)$ is a
$3$-torus trivially fibred by $2$-tori over $(O^\ls)^\KN=S^1$. We also have
trivial torus bundles over the half-spaces
$(\RR_{\ge0}\times\RR)\setminus\{(0,0)\}$ and
$(\RR_{\le0}\times\RR)\setminus\{(0,0)\}$ as well as over
$\RR\times(\RR\setminus\{0\})$. However, the torus bundle is non-trivial over
any loop about $(0,0)\in \RR^2$. The reason is that the equation $xy=t(w+1)$ gives the
identification of torus fibrations over the two half planes via
\begin{equation}\label{Eqn: Patching of KN for focus-focus}
\theta_y= \theta_x^{-1}\cdot\Arg(w+1)\cdot\theta_t. 
\end{equation}
Now consider $\mu_0^{\KN}(e^{i\phi}): X_0^{\KN}(e^{i\phi})\to \RR^2$
over a small circle about the origin in $\RR^2$, yielding a fibration by 2-tori
over $S^1$. The previous descriptions give trivializations of
$\mu_0^{\KN}(e^{i\phi})$ over the right half-plane by $\theta_w, \theta_x$ and
over the left half-plane by $\theta_w, \theta_y$, yielding a half-circle times
$(S^1)^2$ each. These trivialization are identified at their boundaries, which
map to two points on the vertical axis $\{0\}\times\RR\subset\RR^2$, by means of
\eqref{Eqn: Patching of KN for focus-focus}. The two points on the vertical axis
have coordinates $(0,\pm b)$, corresponding to circles $|w|=r$ and $|w|=R$ with
$r<1<R$ in the $w$-plane. Now $\Arg(w+1)$ restricted to the circle $|w|=r<1$ is
homotopic to a constant map, while for $|w|=R>1$ this restriction has winding
number $1$. This means that the topological monodromy of the $2$-torus
fibration $\mu^\KN(e^{i\phi}): X_0^\KN(e^{i\phi})\to \RR^2$ along a
counterclockwise loop about $(0,0)\in\RR^2$ is a (negative) Dehn-twist. Thus
$\mu^\KN(e^{i\phi})$ is homeomorphic to a neighbourhood of an $I_1$-singular
fibre (a nodal elliptic curve) of an elliptic fibration of complex surfaces.
\end{example}

%===========================================================
%===========================================================

\section{Toric degenerations from gluing data}
\label{Sect: Toric degenerations}

%===========================================================

\subsection{Toric degenerations and their intersection complex}
\label{Subsect: Toric log Calabi-Yau spaces}
We now focus on toric degenerations with reducible central fiber, as introduced
in \cite[Def.4.1]{logmirror1}, but following the conventions of
\cite{theta} and interpreted in the complex-analytic category.\footnote{See
\cite[\S A.1 and \S A.2]{theta} for the translation to the earlier conventions of
\cite{logmirror1} and \cite{affinecomplex}.} For much of the present paper we
take the point of view that a toric degeneration is given, thus avoiding to
discuss the canonical reconstruction procedure involving wall structures
\cite{logmirror1},\cite{GHK},\cite{theta}. To further simplify the presentation,
we also restrict to the original case of one-dimensional families.

To define toric degenerations, we start with their central fibers. For this
purpose, we say a map $\iota: X\to Y$ of toric varieties is a \emph{stratified
embedding} if it induces isomorphisms of each closed toric stratum of $X$ with a
closed toric stratum of $Y$. For an \emph{integral polytope} $\tau$, the convex
hull of finitely many integral points in $\RR^n$ for some $n$, we write
$\Lambda_\tau\simeq\ZZ^{\dim\tau}$ for the free abelian group of integral vector
fields on $\tau$, or equivalently, for the group of integral tangent vectors at
any point of $\tau$.

\begin{definition}
\label{Def: broken toric variety}
a) Let $B$ be a topological manifold that is the underlying topological space of
a cell complex $\P$ of integral polytopes with attaching maps inclusions of
faces preserving the integral affine structure. For each $\tau\in\P$ we assume
that the map $\tau\to B$ is injective, that is, that no cell of $\P$
self-intersects. We call $(B,\P)$ an integral polyhedral manifold and write
$\P_\max$ for the set of maximal ($n$-dimensional) cells of $\P$.

b) A \emph{toric log Calabi--Yau} is a reduced complex space $X_0$ of some pure
dimension $n$ with irreducible components $X_\sigma$ the projective toric
varieties with momentum polytopes the maximal cells $\sigma$ of an integral
polyhedral manifold $(B,\P)$. We refer to \cite[Defn.4.3]{logmirror1} for further technical assumptions on toric log Calabi--Yau spaces. For all $\tau \in\P$ we also assume that
\[
X_\tau:=\bigcap_{\sigma\in\P_\max,\, \tau\subseteq\sigma}  X_\sigma
\]
is isomorphic to the toric variety with momentum polytope $\tau$. We call $(B,\P)$ the \emph{intersection complex} of $X_0$ and $X_\tau$ its \emph{toric strata}.
\end{definition}

Thus for any $\tau\in\P$ and face $\omega\subseteq\tau$, we have a closed
embedding $\iota_{\tau\omega}:X_\omega\to X_\tau$. Since $\iota_{\tau\omega}$
preserves toric strata, it is the composition of a toric automorphism of
$X_\omega$ with the inclusion of a closed toric stratum into $X_\tau$. 
%We will
%describe these twists by toric automorphisms of strata in terms of \emph{gluing
%data} once the proceeding sections.

For the following definition recall that a normal complex space $\shX$ with a
divisor $X_0\subset\shX$ is called a \emph{toroidal pair} if locally
$(\shX,X_0)$ is isomorphic to $(Y,D)$ with $Y$ an affine toric variety and
$D\subset Y$ the union of toric prime divisors.

\begin{definition}
\label{Def: Toric degeneration}
A \emph{toric degeneration} is a proper and flat holomorphic map
\[
\delta:\shX\lra \DD
\]
of complex spaces, where $\DD\subset\CC$ a disk, $\shX$ is normal, and the central fiber $X_0$ a union of
toric varieties, glued pairwise torically along toric prime divisors. We also assume that there is a closed
analytic subset $\shZ\subset \shX$ with fibers over $\DD$ of codimension two
such that $(\shX,X_0)$ is a toroidal pair outside of $\shZ$. We also assume that
$Z= \shZ\cap X_0$ is contained in the singular locus $(X_0)_\sing$ of $X_0$.
\end{definition}

We endow $\shX$ and $\DD$ with their divisorial log structures $\M_\shX
=\O_\shX\cap \O^\times_{\shX\setminus X_0}$ and $\M_\DD$, and view a toric
degeneration as a morphism of log analytic spaces
\[
\delta: (\shX,\M_{\shX})\lra (\DD,\M_{\DD}).
\]
%which is log smooth outside of $\shZ$.

We are mostly concerned with the log structure $\M_{X_0}$ on $X_0$ obtained by
restriction of $\M_\shX$. The induced morphism
\[
(X_0,\shM_{X_0})\lra O^\ls
\]
to the standard log point $O^\ls$ is then also log smooth away from $Z$. We therefore refer to $Z$ as the \emph{log singular locus} of $X_0$.

There is the following simple description of $(X_0,\shM_{X_0})$ at smooth points of
$(X_0)_\sing$, including points of $Z$. Let $\sigma,\sigma'\in\P_\max$ be
maximal cells and $\rho=\sigma\cap\sigma'$ be of dimension $n-1$. Let $u,v$ be
local holomorphic functions on $\shX$ restricting to toric monomials on
$X_\sigma, X_{\sigma'}\subset X_0$, respectively, such that $X_0$ is locally
defined by $uv=0$. 

\begin{proposition}
\label{Prop: description of shX in codim=1}
There is a unique Laurent polynomial
\[f_\rho\in\CC[\Lambda_\rho]=\CC[z_1^{\pm1},\ldots,z_{n-1}^{\pm 1}]\] and unique
$\kappa\in\NN\setminus 0$ such that in a neighbourhood of the algebraic torus
$(\CC^*)^{n-1}\subset X_\rho$, the log morphism
\[
(X_0,\shM_{X_0})\to O^\ls
\]
is isomorphic to the logarithmic central fiber of the map
\[
\CC^3\times(\CC^*)^{n-1}\supset V(f,\kappa)\lra \CC,\quad (u,v,t,z)\longmapsto t
\]
with $V(f,\kappa)\subset\CC^3\times(\CC^*)^{n-1}$ defined by
\begin{equation}
\label{Eqn: standard equation}
uv=f_\rho(z) \cdot t^\kappa.
\end{equation}
\end{proposition}

\begin{proof}
This is a simple special case of \cite[Thm.3.22 and Prop.4.20]{logmirror1},
which says that considering $u|_{X_\sigma}$ and $v|_{X_{\sigma'}}$ fixed, the
restriction $f(z,0)$ of $f$ to $t=0$ and $\kappa$ uniquely determine the local
isomorphism class of the log structure $\M_{X_0}$ on $X_0$ together with the log
morphism to $O^\ls$.
\end{proof}

\begin{remark}
We emphasize that $f_\rho$ depends on the choice of $u$ and $v$. Indeed,
changing the monomial $u|_{v=0}\in\CC[\Lambda_\sigma]$ by multiplication with
$z^m\in\CC[\Lambda_\rho]\subset\CC[\Lambda_\sigma]$ changes $f_\rho$ to
$z^m\cdot f_\rho$, and similarly for changes of $v$. Canonical choices of
$f_\rho$ can be deduced, however, depending on the choice of a connected
component of $\Int\rho\cap (B\setminus\shA)$ with $\shA\subset B$ introduced in
Definition~\ref{Def: Degenerate momentum map}, as should be clear from the
discussion of \eqref{Def: m_x} below.
\end{remark}

A central role in the theory of toric degenerations is played by a canonical
extension of the integral affine structure (transition functions in
$\ZZ^n\rtimes\GL(n,\ZZ)$) already given on the interiors of maximal cells to a
larger subset of $B$. To define this affine structure, we now add the mild
assumption that the compatible polarizations on the irreducible components
$X_\sigma\subset X_0$ are the restrictions of an ample line bundle on $X_0$. In
particular, $X_0$ is now projective.

Under this projectivity assumption, \cite[Prop.2.1]{RS2} proves the existence of
a continuous map
\begin{equation}
\label{Eqn: mu:X_0->B}
\mu: X_0\lra B,
\end{equation}
with restriction to each irreducible component $X_\sigma\subset X_0$ an abstract
momentum map in the sense of Definition~\ref{Def: Momentum map}.

\begin{definition}
\label{Def: Degenerate momentum map}
We refer to $\mu$ as a \emph{degenerate momentum map} and write
\[
\mu^\KN=\mu\circ\pi: X_0^\KN\lra B
\]
for the composition of $\mu$ with $\pi: X_0^\KN\to X_0$. The image $\shA=\mu(Z)$ of the log singular locus $Z\subset X_0$ is referred to as \emph{amoeba image} of $Z$.
\end{definition}

Note that $\shA$ is contained in the $(n-1)$-skeleton of $\P$. Moreover,
given an $(n-1)$-cell $\rho\in\P$, the intersection of $Z$ with the algebraic
torus $(\CC^*)^{n-1}\cap X_\rho$ is an algebraic hypersurface, and there exists
a diffeomorphism $\RR^{n-1}\to\Int \rho$ mapping the amoeba of this
hypersurface to $\shA\cap\Int(\rho)$, thus motivating the name amoeba image.

We will now show that the integral affine structure on
$\bigcup_{\sigma\in\P_\max} \Int\sigma\subset B$ extends naturally to
$B\setminus\shA$. For the statement, recall that an integral tangent vector
$\xi$ on a momentum polyhedron $\sigma\subset\RR^n$ defines a monomial function
$z^\xi\in \CC[z_1,\ldots,z_n]$. Thus the monomial function $u$ in
Proposition~\ref{Prop: description of shX in codim=1} defines a primitive
integral vector field $\xi\in \Lambda_\sigma$ that generates the quotient
$\Lambda_\sigma/\Lambda_\rho\simeq\ZZ$ and points from $\rho$ into $\sigma$.
Similarly, $v$ defines $\xi'\in\Lambda_{\sigma'}$ pointing from $\rho$ into
$\sigma'$. Extending the affine structure on $\Int\sigma\cup\Int\sigma'$ across
a neighbourhood of $x\in\Int\rho$ amounts to define the parallel transport of
$\xi$ through $x$ as $-\xi'+m_x$ for some $m_x$ in the image of the inclusion
$\Lambda_\rho\to\Lambda_{\sigma'}$. Now if $x\in B\setminus\shA$, the restriction of $f$ to $\mu^{-1}(x)\subset (\CC^*)^{n-1}\subset X_\rho$ defines a continuous map
\begin{equation}
\label{Def: m_x}
\mu^{-1}(x)=\Hom\big(\Lambda_\rho, U(1)\big)\stackrel{f}{\lra} \CC^*\stackrel{\Arg}{\lra} U(1),
\end{equation}
hence an element of $\Lambda_\rho=\Hom\big(\Hom(\Lambda_\rho,U(1)),U(1)\big)$.
We define $m_x$ as the image of this element in $\Lambda_\sigma'$. It is not hard to see that this extension of the integral affine structure over $\bigcup_{\rho,\dim\rho=n-1} \Int\rho$ does not depend on any choices, see \cite[Constr.2.2 and Rem.2.3]{RS2}.

\begin{proposition}
\label{Prop: Affine structure on B minus shA}
There exists a unique integral affine structure on $B\setminus\shA$ satisfying the following conditions:
\begin{enumerate}
\item
The affine structure restricts to the given affine structure on $\Int\sigma$,
for any $\sigma\in\P_\max$.
\item
The extension over a point $x\in \Int\rho\cap(B\setminus\shA)$ for $\rho\in\P$
an $(n-1)$-cell is via \eqref{Def: m_x} as described.
\end{enumerate}
\end{proposition}

\begin{proof}
Uniqueness follows since if $A\subset \RR^n$ is a closed subset with
$\RR^n\setminus A$ connected, then by the identity theorem for affine functions
there is a unique extension of the standard affine structure on $\RR^n\setminus
A$ to $\RR^n$.%\Bernd{Double check this argument.}

To construct an integral affine chart near a point $x\in B\setminus\shA$, let
$\tau\in\P$ be the minimal cell containing $x$, assumed of codimension $r\ge 1$.
Pick $z \in \mu^{-1}(x)$ and
denote by $V\subset X_0$ the complement of all closed toric strata in $X_0$ not
containing $z$, where $\mu$ is as in \eqref{Eqn: mu:X_0->B}. Thus $V$ is the smallest neighbourhood of $z$ that is a union of
torus orbits of the irreducible components of $X_0$. Write $P=\ol\M_{X_0,z}$ and
let $m_0\in P$ be the image of the generator of $\ol \M_{\O^\ls}$. From the fact
that $X_0$ is a toric log Calabi--Yau space it is shown in \cite[\S3.3]{logmirror1}
that there is a closed embedding
\begin{equation}
\label{Eqn: Closed embedding Phi}
\Phi:V\lra\Specan \big(\CC[P\oplus\Lambda_\tau]\big)
\end{equation}
with image the reduced union of toric prime divisors defined by $z^{m_0}=0$. Thus if $P_\sigma\subset P$ denotes the facet corresponding to
$\sigma\in\P_\max$ with $\tau\subset\sigma$, the map $\Phi$ restricts to an isomorphism of $X_\sigma\cap V$ with $\Specan \CC[P_\sigma\oplus\Lambda_\tau]$. In particular, $\Phi$ induces an isomorphism
\begin{equation}
\label{Eqn: Chart on K(sigma,tau)}
P^\gp_\sigma\oplus\Lambda_\tau\stackrel{\simeq}{\lra} \Lambda_\sigma 
\end{equation}
of free abelian groups, whose restriction to $\Lambda_\tau$ is the embedding
$\Lambda_\tau\to \Lambda_\sigma$ coming from the fact that $\tau$ is a face of
$\sigma$. Note that this isomorphism is unique up to adding the composition
of a homomorphism $P_\sigma^\gp\to\Lambda_\tau$ with the projection
$P_\sigma^\gp\oplus\Lambda_\tau\to P_\sigma^\gp$.

Note also that the embedding $P_\sigma^\gp\to P$ induces a canonical isomorphism $P_\sigma^\gp\simeq P^\gp/\ZZ\cdot m_0$. The induced isomorphism
\[
\Lambda_\sigma\lra P_\sigma^\gp\oplus\Lambda_\tau \lra
(P^\gp/\ZZ\cdot m_0)\oplus\Lambda_\tau
\]
now identifies the set of integral points of the cone $\RR_{\ge
0}\cdot(\sigma-\tau)\subset\Lambda_\sigma\otimes_\ZZ\RR$ with a subset of the
image of $P_\sigma \oplus\Lambda_\tau$. Varying $\sigma$, the images of the
facets $P_\sigma\subset P$ in $P^\gp/\ZZ\cdot m_0$ are the integral points of
the maximal cones of a complete fan in the associated real vector space
$(P^\gp/\ZZ\cdot m_0)\otimes_\ZZ\RR$. We thus see that any closed embedding
$\Phi$ as in \eqref{Eqn: Closed embedding Phi} defines an integral affine chart
along $\Int\tau$. Since $\Phi$ is not unique, we now need to restrict to a certain
choice of $\Phi$ depending on the point $x\in \Int\tau\cap
(B\setminus\shA)$ to define the affine chart near $x$ unambiguously.

To this end, observe that $\Phi$ is the chart for a log structure over $O^\ls$
on $V$, which, however can not be isomorphic to $\M_{X_0}|_V$ unless all
$f_\rho$ are monomial. The picture developed in \cite[\S3.3]{logmirror1} rather
shows that a chart for $\M_{X_0}$ on an open subset $U\subset V$ can always be
obtained as follows. For $p\in\partial P$ denote by $z_p=\Phi^*(z^p)$ and by
$A_p\subset V$ the closure of $V\setminus V(z^p)$, that is, the closure of the
subset where $z^p$ does not vanish. The definition of the sheaf $\shF$ in
\cite[p.263]{logmirror1} then shows that for any $p\in\partial P$ there exists
$h_p\in\O^\times(A_p)$ such that
\[
\CC[P]\lra \O(U), \quad z^p\longmapsto h_p\cdot z_p,
\]
is the chart for a log structure over $O^\ls$ isomorphic to $(U,\M_{X_0}|_U)$.

It follows from \eqref{Def: m_x} 
%from the case $r=1$ to arbitrary $r$, the
that the restriction of $\Arg(h_p)\in U(1)$ to $\mu^{-1}(x)$ defines a homomorphism
\[
\Hom(\Lambda_\tau,U(1))=\mu^{-1}(x)\lra U(1),
\]
hence an element $m_p\in \Lambda_\tau\subset \Lambda_\sigma$. After replacing
$h_p$ by $z^{-m_p}\cdot h_p$ and $\Phi$ by mapping $z^p$ to $z^{-m_p}z_p$, we
can assume that $m_p=0$ for all $p$. Note that $m_p=0$ is equivalent to the
statement that all maps
\[
h_p|_{\mu^{-1}(x)}: U(1)^r\equiv \Hom(\Lambda_\tau, U(1))\lra \CC^*
\]
are homotopic to constant maps. We say that $\Phi$ \emph{is adapted to $x$} if
this is the case, that is, if all $h_p|_{\mu^{-1}(x)}$ are homotopically
constant. Note that at any $x\in B\setminus\shA$ there is exactly one adapted
chart at $x$. We define the affine chart near $x$ as explained with respect to an adapted chart, generalizing the construction in codimension one.

Compatibility of affine charts follows since moving $x$ to a point $y$ in an
adjacent higher-dimensional toric strata is done by localization of $P$, thus
moving some direct summand of $P^\gp$ to $\Lambda_\tau$. By keeping $y$ close to
$x$, the restriction $h_p$ to $\mu^{-1}(y)$ differs from the restriction to
$\mu^{-1}(x)$ only by a small term, hence remains homotopically contractible.
This shows that the restriction of $\Phi$ to the corresponding neighbourhood of
$y$ is an adapted chart also at $y$. The corresponding affine charts are then
compatible by construction.%\Bernd{A little unprecise, I ran out of steam here to provide more details. Maybe OK like this?}
\end{proof}

%===========================================================

\subsection{Faithful and simple toric degenerations}
\label{Subsect: Faithful and simple toric degenerations}

If the log singular locus $Z\subset X_0$ contains $0$-dimensional strata of
$X_0$, the information captured by the log space $(X_0,\M_{X_0})$ typically is
too limited to tell much about $\shX\to\DD$. This is indeed the situation in
\cite{GHK}, where one obtains a large number of topologically different toric
degenerations with the central fiber a union of affine planes $\AA^2$,
joined cyclically along their coordinate axes and with the same induced log
structure on $X_0$.

In contrast, if $Z=\emptyset$ then $(\shX,\M_{\shX})\to(\DD,\M_{\DD})$ is log smooth and
$(X_0,\M_{X_0})\to O^\ls$ captures the complete topology of
\begin{equation}
\label{Eqn: shX minus X_0 to DD}
\shX\setminus X_0\to \DD\setminus\{0\}.
\end{equation}
Indeed, \cite[Thm.0.3]{NO} implies that \eqref{Eqn: shX minus X_0 to DD} is topologically isomorphic to
\begin{equation*}
\label{Eqn: NO for log smooth case}
X_0^\KN\times(0,r)\lra (O^\ls)^\KN\times (0,r)= U(1)\times(0,r).
\end{equation*}
Here $r>0$ is the radius of $\DD$.

Unfortunately, toric degenerations with $Z=\emptyset$ cover only very restricted
cases such as when $\shX_t$ for $t\neq0$ is a toric or abelian variety. If
$Z\neq\emptyset$ one can sometimes find a small resolution $\tilde\shX\to\shX$
with centers in $Z$ making $\tilde\shX\to\DD$ log smooth and the central fiber
$\tilde X_0$ normal crossings. This is for example always possible in dimension
$2$. A certain tradeoff is that now the toric irreducible components of $X_0$
get replaced by certain modifications, complicating any explicit analysis of
$\tilde X_0^\KN$.

Quite a bit more can still be said if $Z$ at least does not contain zero-dimensional
toric strata of $X_0$, as considered in \cite{logmirror1}, \cite{affinecomplex}. To distinguish this situation from the general case, we introduce the following definition.

\begin{definition}
A toric degeneration $\shX\to\DD$ is \emph{faithful} if the log singular locus
$Z\subset X_0$ does not contain zero-dimensional toric strata.
\end{definition} 

By definition, a faithful toric degeneration $\shX\to\DD$ is locally near
zero-dimensional strata of $X_0$ isomorphic to a map of affine toric varieties
\begin{equation}
\label{Eqn: Toric local model near 0d strata}
\Specan \CC[P]\lra \CC
\end{equation}
with reduced central fiber. Here $P\subset\ZZ^{n+1}$ is a saturated and finitely
generated monoid without invertibles other than $0$, and the map to $\CC$ is
defined by a monomial $z^p\in \CC[P]$. A crucial observation in
\cite{logmirror1} is that in this situation there is a natural extension of the
integral affine structure on the interiors of the maximal cells
$\sigma\in\P_\max$ over neighbourhoods of vertices $v\in\P$ in $B$, as we now explain. At a vertex $v\in B$ we have $\mu^{-1}(v) = X_v$, a zero-dimensional toric
stratum, where $\mu$ is as in \eqref{Eqn: mu:X_0->B}. There is a complete toric fan $\Sigma_v$ which describes $\mathcal{X}_0$ at $X_v$ as a gluing of
affine toric varieties (for details see \cite[Lemma 4.11]{A}). In fact, the
monoid $P$ can be identified with 
\[
P\simeq \big\{(m,h)\in \ZZ^n\times\ZZ\,\big|\, h\ge \varphi_v(m)\big\},
\]
where $\varphi_v: \RR^n\lra \RR$ is a specified piecewise linear function. The choice of such a function is described in relation with a choice of polarization in \cite{affinecomplex}, whereas in \cite{theta} it
canonically occurs in the presented mirror construction. The fact that $X_0$ is reduced is equivalent to integrality of
the slopes of $\varphi_v$. 

Now a maximal cone $C\in\Sigma_v$ corresponds to a
maximal cell $\sigma\in\P$ containing $v$, and $\Spec[C\cap\ZZ^n]$ defines an
affine toric chart for $X_\sigma$ containing the zero-dimensional toric stratum
under consideration. This shows that the integral affine structure defined by
$P/\ZZ p \simeq\ZZ^n$ is compatible with the affine structures on the maximal
cells of $\P$ containing $v$.

We have thus defined compatible charts for an affine structure covering the open
stars of vertices and interiors of maximal cells inside the barycentric
subdivision $\tilde \P$ of $\P$. The complement is the codimension two cells
complex $\Delta\subset B$ covered by all codimension two cells of $\tilde\P$
disjoint from vertices and interiors of maximal cells. For example, in dimension~$2$, $\Delta$ is the union of barycenters of $1$-cells of $\P$, while in dimension~$3$ $\Delta$ is the graph with edges connecting the barycenter of codimension one cells $\sigma\in\P$ with the barycenters of $1$-cells $\omega\in\P$.

We summarize the discussion in the following proposition.

\begin{proposition}
If $\shX\to\DD$ is a faithful toric degeneration, there is a natural integral affine structure on $B\setminus\Delta$ for $\Delta\subset B$ the largest $(n-2)$-dimensional subcomplex of the barycentric subdivision of $\P$ disjoint from vertices and interiors of maximal cells of $\P$.
\end{proposition}

In this situation, the statement of the topological description \eqref{Eqn: shX
minus X_0 to DD} of $\shX\setminus X_0 \to \DD\setminus\{0\}$ still holds true
in one important special case, that of \emph{simple singularities} introduced in
\cite[Def.1.60]{logmirror1}. The original definition of this concept is
by a certain indecomposability condition of the local affine monodromy of the
affine structure about $\Delta$. We will not embark on a discussion from this
perspective and rather use the characterization from \cite{logmirror2} in terms
of toric local models, which we now introduce.

\begin{construction}
\label{Constr: Local model simple sings}
Let $\Sigma$ be a complete fan in $\ZZ^{n-q}$ and assume $\psi_0,\psi_1,\ldots, \psi_q$ are
convex integral piecewise linear functions on $\Sigma$ with Newton polytopes
\[\Delta_0,\ldots,\Delta_q\subseteq (\RR^{n-q})^*,\] that is,
\[
\psi_i(m)=-\inf\big\{\langle n,m\rangle \,\big|\, n\in\Delta_i\big\}.
\]
We assume that $\psi_0$ is strictly convex, that is, $\Sigma$ is the normal fan
of a lattice polyhedron $\check\tau=\Delta_0$, while $\Delta_1,\ldots,\Delta_q$
are either a point or an elementary simplex (a lattice simplex with all integral
points vertices).
%\footnote{In the setup of \cite{logmirror1},
%$\Sigma=\Sigma_\tau$ is the fan defined by the polyhedral decomposition $\P$
%along a $q$-dimensional cell $\tau\in\P$, $\psi_0$ is a representative of the
%strictly convex MPL-function $\varphi$ defined at vertices in \eqref{Eqn:
%varphi_v} and descended to $\Sigma_\tau$, and $\Delta_i$ are the polyhedra
%encoding the local monodromy of $B$ \cite[Def.1.60]{logmirror1}.}
Given this data, define a monoid $P\subseteq \ZZ^{n+1}$ by
\begin{equation}
\label{Eqn: P}
\textstyle
P:=\Big\{ (m,a_0,\ldots,a_q)\in \ZZ^{n-q} \times 
\ZZ^{q+1}=\ZZ^{n+1} \,\Big|\, \mbox{$a_i\ge
\psi_i(m)$ for $0\le i\le q$}\Big\}.
\end{equation}
If we let $K$ be the cone in $(\RR^{n+1})^*$ generated by
\[
\bigcup_{i=0}^q \big(\Delta_i\times \{e_i^*\}\big) =
\big(\tau\times\{e_0^*\}\big)\cup \bigcup_{i=1}^q \big(\Delta_i\times \{e_i^*\}\big),
\]
where $e_0^*,\ldots,e_q^*$ is the dual basis of the standard basis
$e_0,\ldots,e_q$ in $\RR^{q+1}$, it is easy to check that $P=K^{\vee}\cap
\ZZ^{n+1}$. Now the map of monoids
\[
\NN^{q+1}\to P,\quad (a_0,a_1,\ldots,a_q)\longmapsto (0,a_0,\ldots,a_q)
\]
defines a ring homomorphism $\CC[t,s_1,\ldots,s_q]\to \CC[P]$. The composition
\begin{equation}
\label{Eqn: Simple local model}
\Specan\CC[P]\lra \CC\times\CC^q \lra \CC
\end{equation}
is a local model of a toric degeneration. The central fiber has irreducible
components $(\CC^*)^q\times\Spec \CC[C\cap\ZZ^{n-q}]$ indexed 
by the maximal cones $C \in \Sigma$ and pairwise glued along
their toric prime divisors as prescribed by $\Sigma$.
\end{construction}

We are now in position to define toric degenerations with simple singularities
based on the local models provided in Construction~\ref{Constr: Local model
simple sings}. 

\begin{definition}
\label{Def: simple singularities}
A toric degeneration $\shX\to \DD$ has \emph{semi-simple singularities} if
locally it is isomorphic to a model
\[
\pi:\Specan \CC[P]\lra \CC
\]
as defined in Construction~\ref{Constr: Local model simple sings}. If in
addition we can assume the Newton polytopes $\Delta_1,\ldots,\Delta_q$ are
standard rather than merely elementary simplicies, we say that $\shX\to\DD$ has
\emph{strongly semi-simple singularities}.
\end{definition}

Note that a toric degeneration with semi-simple singulartities is necessarily
faithful because $q$ in the local model is the dimension of the toric stratum
considered.

Our definition of semi-simple singularities generalizes the definition of simple
singularities in \cite[Def.1.60]{logmirror1}, which in addition requires a
mirror dual condition that amounts to a rigidity condition along a toric
stratum. For example, in dimension two, a toric degeneration is semi-simple iff
at each point of the log singular locus $Z\subset X_0$ it is locally isomorphic
to $uv=w\cdot t^\kappa$, while being simple imposes in addition that $Z$
intersects each irreducible component of $(X_0)_\sing$ in at most one point.

The local equivalence of both definitions follows in finite deformation order,
that is, modulo $t^{k+1}$, from the local models for the logarithmic central
fiber in \cite[Thm.2.6]{logmirror2} and the local rigidity result
concerning deformations \cite[Lem.2.15]{logmirror2}. The extension from finite
order deformations to analytic deformations follows from an analytic
approximation result for relatively smooth log morphisms, as
carried out in \cite[Prop.4.9]{RS2} for the case with simple singularities.

Toric degenerations with simple singularities are the most desirable from many
perspectives: Their central fibers have canonical (log-) versal deformations
constructed starting from combinatorial data on $(B,\P)$ by the smoothing algorithm in
\cite{affinecomplex}, see \cite[\S{A.2}]{theta} together with \cite[\S4.3]{RS2},
their Hodge theory is largely reflected by the affine geometry of $B$
\cite{logmirror2}, and mirror symmetry is a perfect duality on the
class of such toric degenerations, exhibited by a discrete version of Legendre
duality on $(B,\P)$, see \cite{logmirror1}. In the following sections we describe the general fiber of a toric degeneration with simple singularities, by studying the Kato--Nakayama space associated to the central fiber. 
%To state this fact, denote by
%$\iota:B\setminus\Delta\to B$ the inclusion and by $\Lambda$ the sheaf of
%integral tangent vectors on $B\setminus\Delta$, a local system with fibers
%$\ZZ^n$. 
%\begin{theorem} \cite[Thm.5.4]{logmirror1}
%\label{Thm: is classes of X_0 with simple singularities}
%The set of isomorphism classes of central fibers $(X_0,\shM_{X_0})$ of toric
%degenerations $\shX\to \DD$ with simple singularities and fixed associated
%intersection complex $(B,\P)$, with its induced integral affine structure, is in
%canonical bijection with $H^1(B,\iota_*\Lambda\otimes\CC^*)$.
%\end{theorem}
%Note that the affine sheaf cohomology group $H^1(B,\iota_*\Lambda\otimes\CC^*)$
%is, up to a finite group, an algebraic torus $H^1(B,\iota_*\Lambda)
%\otimes_\ZZ\CC^*\simeq (\CC^*)^r$. Moreover, these groups are explicitly
%computable.

\subsection{Lifted open gluing data and slab functions}
\label{Sec: open gluing data}
In the remainder of this section 
%we derive a more canonical
%description of the fibration on $X_0^\KN$ 
%over a somewhat smaller set 
%in terms of 
we review the \emph{(lifted) open gluing data} introduced in \cite[\S5.1]{logmirror1}, which plays a crucial role in the reconstruction of a toric degeneration from combinatorial data on a toric log Calabi--Yau space $X_0$, that plays the role of the central fiber \cite{affinecomplex}. Roughly speaking, gluing data is the data determining how the big torus orbits on the toric irreducible components of $X_0$ are assembled together. This assembling is determined by \emph{closed gluing data} -- see \cite{logmirror1}, Definition~2.3
and Definition~2.10. While closed gluing
data can be interpreted as changing the closed embeddings defined by the inclusion of toric strata on the central fibre of a toric degeneration, we also have a modification of it given by \emph{open gluing
data} \cite{affinecomplex,logmirror1}, where ``Open'' refers
to the fact that these gluing data modify open embeddings. 
We note that the reconstruction of a toric degeneration can be carried either on the \emph{cone side} (the intersection complex) or on the \emph{fan side} (the dual intersection complex) \cite[\S2]{logmirror1}. In what follows we follow conventions of \cite{affinecomplex}, and review ``open gluing data'' for the cone side.

Let $\sigma \in \mathcal{P}_{\mathrm{max}}$ be a maximal cell, and let $\tau \subseteq \sigma$. As explained in \cite[Construction~1.17]{affinecomplex}, there is a space $\mathrm{PM}(\tau)$ ($\breve{\mathrm{PM}}(\tau)$ in the notation of \cite{logmirror1})  of piecewise multiplicative functions along $\tau$, that depends only on the embedding of $\tau$ in $B$. The following definition, which can be found in \cite[Defn 1.18]{affinecomplex}, explains how such piece-wise multiplicative functions can be used to modify the gluing of affine pieces, which cover mutual intersections  of irreducible components of $X_0$.
\begin{definition}
\label{open_gluing_data}
\emph{Open gluing data} for $(B,\P)$ are data $s=(s_{ e})$, indexed by $e \colon \omega \to\tau$ inclusion of faces in $\P$, with the following properties: (1) $s_{ e}\in \PM(\tau)$ for
$e:\omega\to\tau$ (2) $s_{\id_\tau}=1$ for every $\tau\in\P$ (3) if
$e:\tau\to \tau'$, $f:\tau'\to \tau''$ then $s_{ f\circ e}=
s_{ f} \cdot s_{ e}$ wherever defined:
\[
s_{ f\circ e,\sigma}\ =\
s_{ f,\sigma}\cdot s_{ e,\sigma}\qquad
\text{for all $\sigma\in\P_\max$ with $\sigma\supseteq \tau''$}.
\]
%Two open gluing data $s$, $s'$ are \emph{cohomologous} if there exist
%$t_\tau\in \PM(\tau)$, $\tau\in\P$ with $s'_{ e}=t_\tau
%t_\omega^{-1}\cdot s_{ e}$ for any $e:\omega\to\tau$.
\end{definition}
%We note that Definition \ref{open_gluing_data} does not depend on the choice of the maximal cell $\sigma$, since for any $\sigma, \sigma'\supseteq \tau$  , we identify $(s_v)_{v\in \tau}$ in the description of $\mathrm{PM}(\tau)(\tau)$ with the choice $\sigma$ with $(s'_v)_{v\in \tau}$ in the description of $\mathrm{PM}(\tau)(\tau)$ with the choice $\sigma'$, by taking $s'_v$
%to be the parallel transport of $s_v$ via a path from the
%interior of $\sigma$ through $v$ to the interior of $\sigma'$. 
In the case when $B$ is positive and simple, we can recover open gluing data from lifted gluing data, defined as in \cite[Defn 5.1]{logmirror1} as a $\check{C}$ech cocycle.
%\begin{definition}
%\label{def: lifted gluing}
%Define for $\tau \in \P$ the open subset $W_\tau \subseteq B$ to be the union of interiors of all
%cells of the barycentric subdivision $\mathrm{Bar}(\P)$ intersecting $\Int (\tau )$, i.e., having the barycenter of $\tau$ as a vertex. For $e : \omega \to \tau$ write $W_e := W_\omega \cap W_\tau$. Letting $\iota : B_0=B \setminus \Delta \hookrightarrow B$ be the inclusion, we define \emph{lifted gluing data} to be a $\check{C}$ech $1$-cocycle $(s_e)$ for the open cover $W := \{ W_\tau ~ | ~ \tau \in \P \}$ of $B$ and the sheaf $\iota_* \Lambda \otimes \mathbb{C}^*$.
%\end{definition}
Not all open gluing data arises in this way from lifted gluing data -- see \cite[Prop 4.25]{logmirror1}. However, if it does, it
arises from unique lifted gluing data \cite[Thm 5.2]{logmirror1}. Thus we view the set of lifted gluing data as a subset of the set of open gluing data. The following result is \cite[Thm 5.4]{logmirror1}, stated on the cone side. %\footnote{The
%theorem makes also the converse statement using the dual
%intersection complexes; working polarized as we do here, imposes a
%codimension one constraint, see the definition of $A_\PP$ in
%\cite{GHS}, \S{A.2}.}
\begin{theorem}
\label{thm: lifted gluing}
In the case of positive and simple singularities, a polarized toric
log Calabi-Yau space with given intersection complex
$(B,\P)$ is defined uniquely up to isomorphism by
lifted gluing data $s\in H^1 (B,\iota_*\breve{\Lambda}
\otimes\CC^\times)$, where $\breve{\Lambda}$ denotes the sheaf of integral cotangent vectors. 
\end{theorem}
%In the remaining part of this paper, we will use a reformulation of Theorem \ref{thm: lifted gluing} on the cone side, assuming $B$ is the intersection complex of the central fiber, following \cite{affinecomplex}. This amounts to trading the sheaf of integral tangent vectors $\Lambda$ in Theorem \ref{thm: lifted gluing} with the sheaf of integral cotangent vectors $\breve{\Lambda}$. In this case gluing data is given by \[s\in H^1 (B,\iota_*\breve{\Lambda}
%\otimes\CC^\times).\]
For the details on how to define the log structure on a toric log Calabi--Yau space from lifted open gluing data, see \cite[\S4]{logmirror1}. In the remaining part of this section we discuss the role of gluing data in connection with the reconstruction algorithm of \cite{affinecomplex}. 
%focusing attention on smoothings of toric log Calabi--Yau spaces with a standard real structure. 
%One of the main ingredients for the smoothing algorithm are given by functions $f$ defined on the singular locus of a toric log Calabi--Yau space $X_0$ \cite[Defn.4.3]{logmirror1}. In general, it is a challenging question to determine the obstructions to smoothing such a space. Moreover, even if the existence is ensured, the uniqueness would not hold. However, the reconstruction theorem of \cite{affinecomplex} provides a canonical way of describing the smoothing a toric log Calabi--Yau, from combinatorial data on an integral affine manifold with singularities, along with a polyhedral decomposition $(B,\mathcal{P})$. 
One of the main ingredients in this algorithm is a wall-structure on $(B,\P)$.
Part of the wall-structure are the \emph{slab functions} 
$f_{\rho,x}$, which are algebraic functions indexed by a 
codimension $1$ cell $\rho$ of $\P$ and a connected component $x$ 
of $\rho \setminus \Delta$ \cite[Defn.~2.17]{affinecomplex}.
%, given by codimension one polyhedra $\rho$ contained in some cell of $\P$, along with attached algebraic functions $f_\rho$ called \emph{slab functions}   
These functions are obtained from the data of the affine monodromy around the discriminant locus $\Delta$, and they control consistent
gluings of the local models of the deformation of the central fibre. As it is not the focus of the current article, we refer to \cite{affinecomplex} for a comprehensive study of the reconstruction via wall-structures. Our interest in the wall-structure in the context of this paper is based on its connection to lifted open gluing data. We review the connection between slab functions and lifted open gluing data in the following discussion.

\begin{discussion}
\label{Discussion: slabs}
At a general point of the double locus
$(X_0)_\sing$, exactly two irreducible components $X_\sigma$,
$X_{\sigma'}\subset X_0$ intersect and $\rho=\sigma_1\cap\sigma_2$ is an
$(n-1)$-cell. At such a point by Proposition \ref{Prop: description of shX in codim=1} there is a local description of $\shX$ of the form
\begin{equation}
\label{Eqn: Slab equation}
uv=f(z_1,\ldots,z_{n-1})\cdot t^\kappa,
\end{equation}
with $t$ a generator of the maximal ideal of $R$, $\kappa\in\NN\setminus\{0\}$,
$z_1,\ldots,z_{n-1}$ toric coordinates for the maximal torus of
$X_\rho= X_\sigma\cap X_{\sigma'}$ and $u,v$ restricting either to $0$
or to a monomial on $X_\sigma$, $X_{\sigma'}$. The statement that the restriction of the function $f$ is
well-defined after choosing $u,v$ and that this restriction classifies the log structure on the central fibre is one of the main results of \cite{logmirror1}. The zero locus of $f$ in $X_\rho$ specifies the log
singular locus $Z\subset (X_0)_\sing$ where the log structure $\M_{X_0}$ is
not fine. Thus $\M_{X_0}$ is fine outside a closed subset $Z\subset (X_0)_\sing$
of codimension two, a union of hypersurfaces on the irreducible components
$X_\rho$ of $X_0$, $\dim\rho=n-1$. Conversely, there is a sheaf $\shF$
on $X_0$ with support on $(X_0)_\sing$, an invertible $\O_{X_0}$-module on the
open dense subset where $X_0$ is normal crossings, with everywhere
locally non-zero sections classifying log structures arising from a local
embedding into a toric degeneration (see \cite{logmirror1}, Theorem~3.22 and
Definition~4.21). Thus the moduli space of log structures on $X_0$
over $O^\dagger$ that look like coming from a toric degeneration can be explicitly described by an open
subset of $\Gamma((X_0)_\sing,\shF)$ for some coherent sheaf $\shF$ on
$(X_0)_\sing$ -- see \cite[Thm 3.22]{logmirror1} for details. 
%By \cite[Thm 3.27]{logmirror1}, to give a section $f$ of $\Gamma((X_0)_\sing,\shF)$, it suffices to give sections $f_{\sigma}$ of a sheaf over $B$, for every maximal cell $\sigma$ of $\mathcal{\P}$, where this sheaf defined as in \cite[Thm 3.27]{logmirror1}, which can be described in terms of the sheaf of integral tangent vectors along $\sigma$ -- see \cite[Defn.~1.11]{logmirror1}. By abuse of notation we will denote this sheaf by $\Lambda_{\sigma}$. Each such $f_{\sigma}$ can be described 
By \cite[Remark 2.18 2)]{affinecomplex}, an element of $\Gamma((X_0)_\sing,\shF)$ can be described in terms of
(order $0$) \emph{slab functions} $f_{\rho,x} \in \CC[\Lambda_\rho]$, indexed by $\rho$ a codim $1$ cell of $\mathcal{P}$, $x$ a connected component of 
$\rho \setminus \Delta$, and where 
$\Lambda_\rho$ is the space of integral tangent vectors to 
$\rho$. For an explicit description of these functions see \cite[Defn.~ 2.17]{affinecomplex}. 
\end{discussion}
The following result can be found in \cite[Thm ~5.2]{logmirror1}, under the assumption that $(B,\P)$ be positive and simple -- see \cite[\S 4.3]{logmirror1} and \cite[\S 5]{logmirror1} for a comprehensive discussion on positivity and simplicity. 
\begin{theorem}
\label{Thm: slab fncs determine loggy}
Let $s$ be lifted open gluing data. Then we have the following uniquely determined
data:
\begin{itemize}
\item[1)] The log singular locus, given by a closed subset $Z \subseteq X_0(B,\P, s)$, contained in the union of codimension
one strata and not containing any zero-dimensional strata.
\item[2)] Normalized slab functions $f_{\rho,x} \in \CC[\Lambda_\rho]$, which define a log structure on $X_0$. 
\end{itemize}
\end{theorem}
Being normalized means that if $f_\rho$ is the
slab function describing the log structure near a zero-dimensional
toric stratum $x\in X_0$ along a codimension one stratum
$X_\rho\subset X_0$ with $x\in X_\rho$, then $f_\rho(x)=1$. By the
discussion after Definition~4.3 in \cite{logmirror1}, there always
exist open gluing data normalizing a given toric log Calabi-Yau space, so
this assumption imposes no restriction. Note that while previously
we had viewed slab functions only as functions on (analytically)
open subsets of the big torus of $X_\rho$, hence as Laurent
polynomials, in the setup of \cite{logmirror1} or
\cite{affinecomplex} they extend as regular functions to the
zero-dimensional toric stratum they take reference to.

%===========================================================

\section{The topology of toric degenerations and \texorpdfstring{$X_0^\KN$}{X0KN}}
\label{Subsect: Toric log Calabi-Yau spaces}

We now come to the first main result of this paper, which while a
consequence of results already in the literature, is our main motivation for the
study of $X_0^\KN$ and hence deserves to be spelled out. Similar statements to
the following result, in the case of affine hypersurfaces in smooth toric
varieties, can be found in \cite[Prop.5.12 and Cor.5.13]{rstz}.

\begin{theorem}
\label{Thm: KN for X0 in simple case}
Let $\delta:\shX\to \DD=\{z\in\CC\,|\, |z|<R\}$ be a toric degeneration with
strongly semi-simple singularities (Definition~\ref{Def: simple singularities}).
Then the restriction $\shX\setminus X_0\to \DD\setminus\{0\}$ is a topological
fiber bundle isomorphic to
\[
\delta^{KN}\times\id: X_0^\KN\times(0,r)\lra (O^\ls)^\KN\times (0,r)= U(1)\times(0,r).
\]
In particular, for $t= re^{i\theta}\in\DD$, the restriction of this isomorphism
to the fibre over $(e^{i\theta},r)$ induces a homeomorphism
\begin{equation}
\label{Eq: restriction to a phase}
\shX_t \simeq X_0^\KN(e^{i\theta})
\end{equation}
between the general fibre of $\delta$ and the fiber $X_0^\KN(e^{i\theta})$ of $\delta^{KN}$ over $e^{i\theta}\in U(1)$.
\end{theorem}

The proof generalizes to the semi-simple case. However, this generalization
being a somewhat technical point, which requires a detailed local analysis of
the orbifold structure of the log singular locus in such cases, we skip it in
this paper. For the interested reader the proof can be provided in some
unpublished notes of Mark Gross. Note also that semi-simple, but not strongly
semi-simple cases start appearing only in dimensions $>3$ with $\shX_t$ having
singularities in codimension~$4$.

For the proof we have to recall the notion of relative coherence and relative
log smoothness, as defined in \cite[Def.3.6]{NO} and \cite[p.399ff]{Ogus}.
Recall that a face $F$ of a monoid $P$ is a submonoid such that $x, y \in
P$, $x+y \in F$ implies $x, y \in F$. The corresponding notion of sheaf of faces
of a sheaf of monoids is defined by requiring the face condition stalkwise.

\begin{definition} 
\label{def: relatively coherent} 
Let $\mathcal{F} \to \mathcal{O}_X$ be a log structure on a complex analytic
space $X$. $\mathcal{F}$ is \emph{relatively coherent} if locally on $X$ there
exists a coherent log structure $\M$ containing $\mathcal{F}$ such that
$\mathcal{F}$ is locally generated as a sheaf of faces in $\M$ by a finite
number of sections of $\M$. In this case one says that 
$\mathcal{F}$ is relatively coherent in $\mathcal{M}$.
\end{definition}

Any sheaf of faces $\mathcal{F}$ of $\M_X$ of a coherent log structure $\M_X \to
\mathcal{O}_X$ is again a log structure, but the point of the definition is that
$\shF$ may not be coherent itself. The notion of log smoothness extends to this
class of non-coherent log structures.

\begin{definition}
\label{def:relatively smooth}
Let $(Y,\shM_Y)$ be a fine log space. A morphism of log spaces $f: (X,\shF) \to
(Y,\shM_Y)$ is \emph{relatively log smooth} if locally on $X$ there is a fine
log structure $\M$ on $X$
in which $\shF$ is relatively coherent and satisfying the following conditions:
\begin{itemize}
\item[i)] The composed map $(X,\shM) \to (X,\shF) \to (Y,\shM_Y)$ is log smooth.
\item[ii)] The stalks of the quotient monoid $\shM/\shF$ are free monoids.
\end{itemize}
\end{definition}

The following is Theorem $5.1$ in \cite{NO}. For the notion of exactness of a log morphism see \cite[Def.I.2.1.15]{Ogus}.

\begin{theorem} 
\label{Thm: Nakayama/Ogus}
Let $f:(X,\M_X)\to (Y,\M_Y)$ be a proper, separated, exact and relatively smooth
morphism of log analytic spaces, with $(Y,\M_Y)$
fine and $(X,\M_X)$ relatively coherent. Then $f^\KN : (X,\M_X)^\KN\to
(Y,\M_Y)^\KN$ is a topological fibre bundle, with fibres oriented topological
manifolds, possibly with boundary.
\qed
\end{theorem}

We are now in position to prove Theorem~\ref{Thm: KN for X0 in simple case}.

\begin{proof} (of Theorem~\ref{Thm: KN for X0 in simple case}.)
Since the log structure of $\shX$ is trivial outside $X_0$, the conclusion of
the theorem follows immediately from \cite{NO} once we
verify the hypothesis of Theorem~\ref{Thm: Nakayama/Ogus} for $(X_0,\M_{X_0})\to O^\ls$.

Since $\M_Y$ has stalks $\NN$, the condition of exactness follows from relative
smoothness.

By Definition~\ref{Def: simple singularities}, the toric degeneration $\shX\to \DD$ is locally isomorphic to the composition
\[
\Spec \CC[P]\lra \CC\times \CC^q\lra \CC
\]
defined by $t=z^{e_0}$, where $P$ is as in \ref{Eqn: P}. There are two natural
log structures on $\Spec \CC[P]$, the toric one $\M_P$ and the divisorial one
$\M_\delta$ defined by the divisor $(t=0)\subset \Spec\CC[P]$. Now $\M_\delta$
is not fine, but relatively coherent. In fact, adopting the notation of
Construction~\ref{Constr: Local model simple sings}, $\M_\delta$ is the sheaf of
faces defined by the minimal face $F\subset P$ containing $e_0$.

To verify Definition~\ref{def:relatively smooth},(ii), we need to verify that
the quotient monoid $P/F$ is free. To this end denote by $\ol K=e_0^\perp\cap K$
the cone in $(\RR^*)^{n+1}$ generated by
\begin{equation}
\label{Eqn: Monodromy cone}
\bigcup_{i=1}^q \big(\Delta_i\times \{e_i^*\}\big).
\end{equation}
Then $(P/F)^\vee=\Hom(P/F,\NN)$ is canonically isomorphic to the monoid of
integral points in $\Hom(\ol K,\RR_{\ge0})\subset F^\perp$. Since by assumption
all $\Delta_i$ are elementary simplices, $\ol K$ is a standard cone, and hence
its integral points define a free monoid. Thus $(P/F)^\vee$ is free and in turn
so is $P/F$. This verifies Definition~\ref{def:relatively smooth},(ii) at the
zero-dimensional toric stratum of $\Specan \CC[P]$.

At other points of the divisor $(t=0)\subset \Specan \CC[P]$, the inclusion
$(\ol\M_\delta)_x\subset (\ol\M_P)_x$ is given by taking the quotient of
$F\subset P$ by a face of $P/F$, thus verifying both the condition of relative
coherence and freeness of the quotient monoid. Hence, the log structure on
$(\shX, \M_\shX)$ and its restriction to $(X_0,\M_{X_0})$ are relatively
coherent and the morphism to $(\DD,\M_\DD)$ and to $O^\ls$, respectively, are
relatively log smooth.

We have thus verified the hypothesis of Theorem~\ref{Thm: Nakayama/Ogus}. Hence, the result follows.
\end{proof}

%===========================================================

\section{Torus fibrations on Kato--Nakayama spaces}
\label{sec: The Topological Classification of Torus Bundles}
Let $B$ be a topological manifold and let $\mu: X\to B$ be a torus bundle of
relative dimension $r$ with transition
functions taking values in $U(1)^r\rtimes \GL(r,\ZZ)$. The topological classification of such torus bundles works in
analogy with the Lagrangian fibration case discussed in
\cite{Duistermaat}.  Note that $\Lambda= R^1\mu_* \ul\ZZ$ is a local
system with fibres 
\[ \Lambda_x= H^1(\mu^{-1}(x),\ZZ)\simeq \ZZ^r.\] 
%For a local system $\Lambda$ on a topological space $B$ with fibres $\ZZ^r$ 
We write $\Hom(\Lambda, U(1))$ for the associated torus
bundle. As a set, \[\Hom(\Lambda, U(1))= \coprod_{x\in B}
\Hom(\Lambda_x,U(1)),\] and the topology is generated by sets, for
$m\in\Gamma(U,\Lambda)$ with $U\subset X$ and $V\subset U(1)$
open,
\[
V_m =\big\{ (x,\varphi)\,\big|\, x\in U,\, \varphi
\in\Hom(\Lambda_x,U(1)),\, \varphi(m)\in V\big\}.
\]
The torus $\Hom(\Lambda_x,U(1))\simeq U(1)^r$ acts fibrewise by
translation on any trivialization $\mu^{-1}(U) \simeq U\times T^r$,
and this action does not depend on the trivialization. Hence $X\to
B$ can be viewed as a $\Hom(\Lambda, U(1))$-torsor. In
particular, if $\mu:X\to B$ has a section, then \[X\simeq
\Hom(\Lambda, U(1))\simeq \check\Lambda\otimes U(1)\] is
isomorphic to the trivial torsor. Here we write $\check \Lambda =
\shHom(\Lambda,\ul\ZZ)$.
%and view the locally constant sheaf $\shHom(\Lambda,\ul U(1))$ as a topological space via its espace \'etal\'e. 
The cohomology group $H^1(X,\check\Lambda\otimes U(1))$
classifies isomorphism classes of $\check\Lambda\otimes
U(1)$-torsors by the usual \v Cech description. Moreover, from the
exact sequence
\[
0\lra\check\Lambda \lra
\check\Lambda\otimes_\ZZ \RR\lra \check\Lambda\otimes_\ZZ U(1)\lra
0,
\] exhibiting $\check\Lambda\otimes_\ZZ U(1)$ fibrewise as a
quotient of a vector space modulo a lattice, we obtain the long
exact cohomology sequence
\begin{equation}
\label{Eq: delta as connecting hom}
\ldots\lra H^1 (B,\check\Lambda)\lra H^1(B,\check\Lambda\otimes\RR)
\lra H^1(B,\check\Lambda\otimes U(1))\stackrel{\delta}{\lra}
H^2(B,\check\Lambda)\lra\ldots
\end{equation}
The image of the class $[X]\in H^1(B,\check\Lambda\otimes U(1))$ of $\mu \colon X\to B$
under the connecting homomorphism, as
a $\check\Lambda\otimes U(1)$-torsor, defines
the obstruction $\delta([X])\in H^2(B,\check\Lambda)$ to the existence of a
continuous section of $\mu: X\to B$. The proof works as in the symplectic case
discussed in \cite{Duistermaat}, p.696f.

With this notation, we summarize the general discussion on torus
bundles with locally constant transition functions in the following
proposition. 

\begin{proposition}
\label{Prop: Topological torus fibrations}
Let $B$ be a topological manifold and let $\mu: X\to B$ be a torus bundle of
relative dimension $r$ with transition
functions taking values in $U(1)^r\rtimes \GL(r,\ZZ)$. Then up to isomorphism, $X\to B$ is given
uniquely by the local system 
\[\Lambda= R^1\mu_*\ul\ZZ\] with fibres
$\ZZ^r$ and a class 
\[[X]\in H^1(B,\check\Lambda\otimes U(1)).\]
Moreover, a continuous section of $\mu$ exists if and only if $\delta([X])\in H^2(B, \check\Lambda)$ vanishes, where $\delta$ is as in \eqref{Eq: delta as connecting hom}. In this case, we have an isomorphism $X\simeq \Hom(\Lambda, U(1))$ of torus bundles.
\qed
\end{proposition}

\begin{remark}
\label{Rem: Hom^0}
A torus bundle $X\to B$ with locally constant transition functions in $U(1)^r\rtimes\GL(r,\ZZ)$ as in the preceding proposition can be explicitly reconstructed from its class $[X]\in H^1(B, \check\Lambda\otimes U(1))$ as follows. Since $H^1(B, \check\Lambda\otimes U(1))= \Ext^1(\Lambda, U(1))$, any such class defines an extension
\[
1\lra U(1)\stackrel{i}{\lra} \shE\stackrel{q}{\lra} \Lambda\lra 0
\]
of $\Lambda$ by $U(1)$. Then the set $\Hom^\circ(\shE, U(1))$ of homomorphisms
$\varphi:\shE_x\to U(1)$, $x\in B$ with $\varphi\circ i= 1$ is naturally a
$\check\Lambda\otimes U(1)$-torsor by letting $\lambda:\Lambda_x\to U(1)$
act by $\varphi\mapsto (\lambda\circ q)\cdot \varphi$. By the
description of gluing trivial bundles via a \v Cech $1$-cocycle, it is then not
hard to construct an isomorphism of $\Hom^\circ(\shE, U(1))$ with $X\to B$ as
$\check\Lambda\otimes U(1)$-torsors.
\end{remark}

\begin{remark}
\label{Rem: Sections}
Let $X_0^\KN|_{B\setminus\shA} \lra B\setminus\shA$ be the torus bundle defined
by the central fibre of a proper toric degeneration. If $X_0$ is built using trivial gluing data
(\cite{logmirror1},Example $2.6$), then $[X_0^\KN|_{B\setminus\shA}]\in
H^1(B\setminus\shA,\check\Lambda\otimes U(1))$ is trivial and hence the
existence of a continuous section follows immediately from the above
proposition. More generally, we will see in Subsection~\ref{Subsect: X_0^KN
real} that for gluing data in $H^1(B,\iota_*\check\Lambda\otimes
\RR^\times)\subset H^1(B, \iota_*\check\Lambda\otimes\CC^\times)$, the
Kato-Nakayama space $X_0^\KN$ has a real structure (Corollary~\ref{Cor: Standard
real structure cohomological}) and the real locus forms a branched cover of
degree $2^{n+1}$ over $B$ (Proposition~\ref{Prop: real locus is a branched cover
of B}). Moreover, for certain gluing data, including the trivial one, this cover
contains a connected component mapping homeomorphically to $B$, thus defining a
section. See also Theorem~2.6 in \cite{GrossGeometry} for a related result on
the existence of Lagrangian sections in the context of SYZ-fibrations.
\end{remark}
In the remaining part we discuss the key point of this article, and provide 
%\Bernd{The ``remaining part'' is the major part!} 
a description of the Kato--Nakayama space over the central fibre of a toric degeneration as a topological torus bundle. We then discuss how to canonically classify torus fibrations on it by  controlling the gluing data in \S \ref{Sec: Topological Torus Bundles: a Study via Gluing Data}.

Denote by $Z^\KN= \pi^{-1}(Z)\subset X_0^\KN$ where $Z\subset X_0$ is the
log singular locus and 
\[ \shA=\mu(Z)\subset B \]
the amoeba image of it in $B$. Consider the composition
\begin{equation}
\label{Eq: fibration on KN}
\mu^\KN: X_0^\KN \stackrel{\pi}{\lra} X_0 \stackrel{\mu}{\lra} B
\end{equation} 
of the projection of the Kato-Nakayama space with the generalized momentum map. The main result of this
section, Theorem~\ref{Thm: (X_0)^KN->B is a torus bundle} gives a canonical description of the subset $(\mu^\KN)^{-1}(B\setminus\shA)\subset X_0^\KN$ as a torus bundle over $B \setminus \shA $. 
%defined in Proposition \ref{prop: RS}. 
We will also provide a cohomological description of this torus
bundle in analogy with the case of Lagrangian fibrations treated in
\cite{Duistermaat}. See also \cite{GrossTopology}, \S3 and
\cite{GrossGeometry}, \S2 for a discussion in the context of SYZ-fibrations. For
$U\subset B\setminus A$ any subset, we write $X_0^{\KN}|_U= (\mu^\KN)^{-1}(U)$,
viewed as a topological torus bundle over $U$. Generally, topological $r$-torus
bundles are fibre bundles with structure group $\operatorname{Homeo} (T^r)$, the
group of homeomorphisms of the $r$-torus. There is the obvious subgroup
$U(1)^r\rtimes \GL(r,\ZZ)$ of homeomorphisms that lift to affine transformations
on the universal covering $\RR^{r+1}\to T^r= \RR^r/\ZZ^r$. In higher dimensions
(certainly for $r\ge 5$), there exist exotic homeomorphisms that are not
isotopic to a linear one \cite[Thm. ~4.1]{Hatcher}. However, in the present
situation such exotic transition maps do not occur, and we can even find a
system of local trivializations with transition maps induced by locally constant
affine transformations.

For $\sigma\in\P^\max$ denote by $\M_{X_\sigma}$ the toric log
structure for the irreducible component $X_\sigma\subset X_0$ and
by $X_\sigma^\KN$ its Kato-Nakayama space. By \cite{logmirror1},
Lemma~5.13, there is a canonical isomorphism
\begin{equation}
\label{Eqn: GS5.13}
\M_{X_0}^\gp|_{X_\sigma\setminus Z}\simeq
\M_{X_\sigma}^\gp|_{X_\sigma\setminus Z}\oplus\ZZ,
\end{equation}
the $\ZZ$-factor generated by the generator $t$ of $\maxid_R$ chosen
above.

First we show that the log singular locus can be dealt with by taking closures,
even stratawise. For $\tau\in\P$ denote by $X_\tau^\KN= \pi^{-1}(X_\tau)\subset
X_0^\KN$ and by $Z_\tau^\KN =Z^\KN\cap X_\tau^\KN$. 

\begin{lemma}
\label{Lem: Z^KN nowhere dense}
On each toric stratum $X_\tau\subset X_0$, the preimage
$Z_\tau^\KN\subset X_\tau^\KN$ of the log singular locus is a
nowhere dense closed subset.
\end{lemma}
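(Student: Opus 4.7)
The plan is to split the claim into closedness, which is immediate, the nowhere-density of $Z\cap X_\tau$ inside the scheme $X_\tau$, and finally the lifting of this nowhere-density through the projection $\pi\colon X_\tau^\KN\to X_\tau$.

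First, closedness of $Z_\tau^\KN$ is automatic. By construction the log singular locus $Z\subset X_0$ is closed (locally in each slab it is the vanishing of the function $f$ appearing in $uv=f\cdot t^\kappa$). Since $\pi$ is continuous and $X_\tau^\KN=\pi^{-1}(X_\tau)$, the preimage $Z_\tau^\KN=\pi^{-1}(Z)\cap X_\tau^\KN$ is closed in $X_\tau^\KN$.

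Next I would argue that $Z\cap X_\tau$ is nowhere dense inside $X_\tau$. By \cite{logmirror1}, Theorem~3.22 and Definition~4.21, $Z$ is cut out in each codimension-one stratum $X_\rho$ by a section of the sheaf $\shF$, which is everywhere locally nonzero; concretely, in the slab equation $uv=f\cdot t^{\kappa_\rho}$ the restriction of $f$ to any toric stratum $X_\tau\subset X_\rho$ is a nonzero section (this is built into the local model used to define $\M_{X_0}$). Hence $Z\cap X_\tau=V(f|_{X_\tau})$ is a proper closed subset of the irreducible toric variety $X_\tau$, and therefore nowhere dense in $X_\tau$.

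Finally I would lift this to $X_\tau^\KN$ by working in charts. Let $\tilde x\in Z_\tau^\KN$ with image $x\in Z\cap X_\tau$, and let $W$ be a basic open neighbourhood of $\tilde x$ of the form $\pi^{-1}(U)\cap\bigcap_{j=1}^k\ev_{s_j}^{-1}(V_j)$ with $s_j\in\Gamma(U,\M_{X_0}^\gp)$ and $V_j\subset\RR_{\ge0}\times U(1)$ open. Shrinking $U$ and using Lemma~\ref{Lem: Existence of real charts} I may assume $U$ supports a chart $\beta\colon P\to \Gamma(U,\M_{X_0})$, and then Proposition~\ref{Prop: KN space from chart} embeds $\pi^{-1}(U)$ as a closed subset of $U\times\Hom(P^\gp,U(1))$. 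The nowhere-density on the scheme level produces a sequence $x_n\in U\cap X_\tau$, $x_n\notin Z$, converging to $x$. It remains to choose $\lambda_n\in\Hom(P^\gp,U(1))$ so that the pair $(x_n,\lambda_n)$ satisfies the constraint in Proposition~\ref{Prop: KN space from chart} and $\lambda_n$ converges to the phase $\lambda$ attached to $\tilde x$. This is possible because at $x_n\notin Z$ the log structure is fine and the constraint in Proposition~\ref{Prop: KN space from chart} is strictly weaker than at $x$: elements of $P$ that become invertible near $x_n$ force $\lambda_n$ to take values whose continuous limit is exactly the corresponding value of $\lambda$ (by continuity of $\Arg$ applied to the sections $g^\sharp(z^p)$ wherever they are invertible), whereas the remaining components of $\lambda_n$ can be chosen freely and made to converge to those of $\lambda$. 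This yields points $\tilde x_n\in W\setminus Z_\tau^\KN$, proving that $X_\tau^\KN\setminus Z_\tau^\KN$ is dense.

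The main obstacle is the last paragraph: at $x\in Z$ the ghost sheaf $\ol\M_{X_0}^\gp$ jumps, and one must verify that any prescribed phase $\lambda$ on the bigger stalk can be realised as a continuous limit of admissible phases $\lambda_n$ on the smaller stalks at $x_n\notin Z$. Concretely, one has to check that the constraint $g^\sharp(z^p)(x)=\lambda(p)|g^\sharp(z^p)(x)|$ for those $p$ whose image becomes invertible at $x_n$ is compatible under the limit $x_n\to x$ with the chosen value $\lambda(p)$; this is the only nontrivial continuity point and is handled by the fact that $\Arg\circ\,g^\sharp(z^p)$ is continuous wherever defined.
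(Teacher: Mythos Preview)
Your argument has a genuine gap in the third step. You assume that, after shrinking $U$, there exists a chart $\beta\colon P\to\Gamma(U,\M_{X_0})$ with $P$ a fine monoid on a neighbourhood $U$ of $x\in Z$, and then invoke Proposition~\ref{Prop: KN space from chart}. But this is precisely what fails at log singular points: the log structure $\M_{X_0}$ is \emph{not fine} at $x\in Z$, and in fact no fine chart exists on any neighbourhood of such a point. Concretely, in the focus-focus model of Example~\ref{Expl: Focus-focus singularity I}, at $p_0$ one has $\ol\M_{X_0,p_0}=\NN$ while at nearby points of the double locus $\ol\M_{X_0}=\NN^2$; for a fine chart the generization maps on ghost stalks are quotients by faces, hence surjective, whereas here the generization map $\NN\to\NN^2$, $1\mapsto(1,1)$, is only injective. (Your reference to Lemma~\ref{Lem: Existence of real charts} does not help: that lemma upgrades an existing chart to a real one, it does not produce a chart where none exists.) Relatedly, your final paragraph has the stalk sizes reversed: the ghost stalk at $x\in Z$ is \emph{smaller}, not bigger, than at nearby points $x_n\notin Z$.

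The paper's proof sidesteps this by never attempting a chart at $x$. Instead it takes a chart on an open set $U\subset X_0\setminus Z$ containing the generic point $\eta$ of the minimal stratum $X_\tau$ through $x$, where the log structure \emph{is} fine and $\ol\M_{X_0}|_{U\cap X_\tau}$ is constant. The key new ingredient is the injectivity of the generization map $\chi_{\eta x}\colon\ol\M_{X_0,x}\to\ol\M_{X_0,\eta}$, argued directly from the divisorial description: both stalks embed in the same $\NN^r$ indexed by the components of $X_0$ through $X_\tau$. Injectivity of $\chi_{\eta x}$ gives surjectivity of the dual map $\Hom(\ol\M_{X_0,\eta}^\gp,U(1))\to\Hom(\ol\M_{X_0,x}^\gp,U(1))$, which is exactly the statement that every phase over $x$ is hit by phases over $U\cap X_\tau$. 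Together with $x\in\cl(U\cap X_\tau)$ this yields the density. This injectivity of generization is the missing idea in your approach.
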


\begin{proof}
It suffices to prove the statement over an irreducible component
$X_\sigma\subset X_0$, $\sigma\subset B$ a maximal cell. Let $x\in Z\cap
X_\sigma$ and $X_\tau\subset X_\sigma$ the minimal toric stratum containing $x$.
Since $Z$ does not contain zero-dimensional toric strata, $x$ is not the generic
point $\eta\in X_\tau$. We claim that the generization map $\chi_{\eta
x}:\ol\M_{X_0,x}\to \ol\M_{X_0,\eta}$ is injective. In fact, $\M_{X_0}$ is
locally the divisorial log structure for a toric degeneration. Hence the stalks
of $\ol\M_{X_0}$ are canonically a submonoid of $\NN^r$ with $r$ the number of
irreducible components of $X_0$ containing $X_\tau$, say $X_{\sigma_1},\ldots,
X_{\sigma_r}$. An element $a\in \NN^r$ lies in $\ol\M_{X_0,x}$ iff $\sum a_i
X_{\sigma_i}$ is locally at $x$ a Cartier divisor, in a local description as the
central fibre of a toric degeneration. In any case, both $\ol\M_{X_0,x}$ and
$\ol\M_{X_0,\eta}$ are submonoids of the same $\NN^r$, showing the claimed
injectivity of $\chi_{\eta x}$.

Now take a chart $\ol\M_{X_0,\eta}\to\Gamma(U,\M_{X_0})$ with $U$ a Zariski-open
neighbourhood of $\eta$ in $X_0\setminus Z$.
Shrinking $U$ if necessary, one can assume that $\ol\M_{X_0}|_{U\cap X_\tau}$
is a constant sheaf, and so Proposition~\ref{Prop: KN space from chart} yields a
canonical homeomorphism $\pi^{-1}(U\cap X_\tau)= (U\cap
X_\tau\times \Hom(\ol\M_{X_0,\eta}^\gp, U(1))$. By the definition of the
topology on $X_0^\KN$, the composition
\[
(U\cap X_\tau)\times \Hom(\ol\M_{X_0,\eta}^\gp, U(1)) \lra 
\Hom(\ol\M_{X_0,\eta}^\gp, U(1))  \lra \Hom(\ol\M_{X_0,x}^\gp, U(1)) 
\]
of the projection with pull-back by the generization map $\chi_{\eta
x}$ is continuous. By injectivity of $\chi_{\eta x}$ this
composition is surjective. Since $x\in \cl(U)$ we conclude that
$\pi^{-1}(x)$ is contained in the closure of $\pi^{-1}(U)$, showing
the density of the complement of $Z_{\tau}^{KN}$, which is
equivalent to the desired nowhere density.
\end{proof}

\begin{lemma}
\label{Lem: mu^KN over maximal cell}
For $\sigma\in\P_{\max}$ denote by $\Lambda_\sigma=
\Gamma(\Int\sigma,\Lambda)$ the group of integral tangent vector
fields on $\sigma$. Then there is a canonical continuous surjection
\[
\Phi_\sigma: \sigma\times \Hom(\Lambda_\sigma\oplus\ZZ, U(1)) \lra
(\mu^\KN)^{-1}(\sigma)\subset X_0^\KN,
\]
which is a homeomorphism onto the image over the complement of the
log singular locus $Z\subset X_0$.
\end{lemma}
\begin{proof}
By  Proposition~\ref{Prop: KN of toric variety}, we have a canonical homeomorphism between $\sigma \times \Hom(\Lambda_{\sigma},\ZZ)$ and $(X_\sigma, \shM_\sigma)^{KN}$.
Therefore, we have to construct a map 
\[ \Phi_\sigma \colon (X_\sigma, \shM_\sigma)^{KN} 
\times \Hom(\ZZ, U(1)) \lra (\mu^\KN)^{-1}(\sigma)\,.\]
We will first fix $x \in X_\sigma$ and describe 
$\Phi_{\sigma}$ above $x$. As the fiber of 
the natural projection 
$(X_\sigma, \shM_\sigma)^{KN}  \lra X$ is isomorphic to 
$\Hom(\ol\M_{X_{\sigma},x}^\gp, U(1))$, and the fiber of the natural projection 
$X_0^{KN} \lra X_0$ is isomorphic to 
$ \Hom(\ol\M_{X_{0},x}^\gp, U(1))$, we have to construct a map 
\[ \Phi_{\sigma,x} \colon \Hom(\ol\M_{X_{\sigma},x}^\gp \oplus \ZZ, U(1)) \lra  \Hom(\ol\M_{X_{0},x}^\gp, U(1))\,.\]
Let $X_\tau \subset X_\sigma$ be the minimal toric stratum of $X_\sigma$ containing $x$ and let $\eta$ be the generic point of 
$X_\tau$. We have $\eta \notin Z$: indeed, if one would have 
$\eta \in Z$, one would have 
$X_\tau = \overline{\eta} \subset Z$, in contradiction with the fact that $Z$ does not contain zero-dimensional toric strata.
By minimality of $\eta$, we have $\ol\M_{X_{\sigma},x}^\gp
=\ol\M_{X_{\sigma},\eta}^\gp$. On the other hand, as 
$\eta \notin Z$, we have  
by 
\eqref{Eqn:
GS5.13} a canonical isomorphism $\ol\M_{X_{\sigma},\eta}^\gp \oplus \ZZ
\simeq \ol\M_{X_{0},\eta}^\gp$. Therefore, we have to construct a map 
\[ \Phi_{\sigma,x} \colon \Hom(\ol\M_{X_{0},\eta}^\gp, U(1)) \lra  \Hom(\ol\M_{X_{0},x}^\gp, U(1))\,.\]
We take for $\Phi_{\sigma, x}$ the map induced by the generization map $\chi_{\eta x} \colon \ol\M_{X_{0},x}^\gp \lra \ol\M_{X_{0},\eta}^\gp$. By the proof of Lemma~\ref{Lem: Z^KN nowhere dense}, $\Phi_{\sigma,x}$ is surjective. 
Varying $x$ in $X_\sigma$, we define the map 
$\Phi_\sigma$, which is continuous by definition of the 
topology on Kato-Nakayama spaces. As the fiberwise maps 
$\Phi_{\sigma,x}$ are surjective, the map $\Phi_\sigma$
is also surjective.

Finally, for $x \notin Z$, we have by 
\eqref{Eqn:
GS5.13} a canonical isomorphism $
 \ol\M_{X_{0},x}^\gp \simeq \ol\M_{X_{\sigma},x}^\gp \oplus \ZZ$, hence $ \ol\M_{X_{0},x}^\gp \simeq \ol\M_{X_{0},\eta}^\gp$, and so $\Phi_{\sigma,x}$ is bijective. We conclude that $\Phi_\sigma$
 is an homeomorphism onto its image over the complement of $Z$.

%By Lemma~\ref{Lem: Z^KN nowhere dense} we may establish the result away from $Z$ and then extend $\Phi_\sigma$ by continuity.For any $x\in X_\sigma\setminus Z$, the isomorphism\eqref{Eqn:GS5.13} establishes a canonical bijection\[\Hom(\M_{X_0,x}^\gp, U(1))\lra \Hom(\M_{X_\sigma,x}^\gp, U(1)) \times\Hom(\ZZ, U(1)).\]This bijection is compatible with the fibrewise description of the Kato-Nakayamaspaces of $X_0$ and of $X_\sigma$ in \eqref{Eqn: (x,theta) description of KNspace}, respectively, as well as with the definition of the topology. Nowvarying $x\in X_\sigma\setminus Z$, Proposition~\ref{Prop: KN of toric variety}turns the first factor into the complement of a closed, nowhere dense subset (tobecome $\Phi_\sigma^{-1}(Z^\KN)$) in $\sigma\times \Hom(\Lambda_\sigma, U(1))$.The inverse of this description of $(\mu^\KN)^{-1}(\sigma)$ over$X_\sigma^\KN\setminus Z^\KN$ defines the map $\Phi_\sigma$ over$X_0^\KN\setminus Z^\KN$.
\end{proof}
\begin{remark}
Let $\sigma\in\P_{\max}$ be a maximal cell as in Lemma \ref{Lem: mu^KN over maximal cell}. It follows immediately from the definitions that with respect to the product decomposition
\[
\sigma\times\Hom(\Lambda_\sigma\oplus\ZZ, U(1)) =
X_\sigma^\KN\times U(1),
\]
of the domain of the map $\Phi$ according to Proposition~\ref{Prop: KN of toric
variety}, the restrictions of $\pi: X_0^\KN\to X_0$ and $\delta^\KN: X_0^\KN\to
S^1$ to $(\mu^\KN)^{-1}(\sigma)\setminus Z^\KN$ are given by the projection to $X_\sigma^\KN$
followed by $X_\sigma^\KN\to X_\sigma$ and by the projection to $U(1)$,
respectively.
\end{remark}
\begin{theorem}
\label{Thm: (X_0)^KN->B is a torus bundle}
The map $\mu^\KN: X_0^\KN\to B$ in \eqref{Eq: fibration on KN} is a
bundle of $(n+1)$-tori over $B\setminus\shA$, where $n = \dim B$. Similarly, the fibre $X_0^\KN(\xi)$ of $\delta^{KN}:X_0^\KN \to S^1$, defined in \eqref{Eq: restriction to a phase}, is a bundle of $n$-tori over $B\setminus\shA$.
\end{theorem}
\begin{proof}
For $\sigma\in\P^\max$ denote by $T_\sigma =
\Hom(\Lambda_\sigma,U(1))\times U(1)$ the $(n+1)$-torus fibre of
$\mu^\KN$ over $\sigma$ in the description of Lemma~\ref{Lem: mu^KN
over maximal cell}. For $x\in B\setminus \shA$ let $\tau\in \P$ be
the unique cell with $x\in\Int\tau$. Let $n=\dim B$ and $k=\dim
\tau$. Then in an open contractible neighbourhood $U\subset
B\setminus\shA$ of $x$, the polyhedral decomposition $\P$ looks like
the product of $\Lambda_\tau\otimes_\ZZ \RR$ with an
$n-k$-dimensional complete fan $\Sigma_\tau$ in the vector space
with lattice $\Lambda_x/\Lambda_{\tau,x}$. Over each maximal cell
$\sigma$ containing $\tau$, we have the canonical homeomorphism of
$(\mu^\KN)^{-1} (\sigma)$ with $\sigma\times \Hom(\Lambda_\sigma,
U(1))\times U(1)$ provided by Lemma~\ref{Lem: mu^KN over maximal
cell}. Thus for any pair of maximal cells $\sigma,\sigma'
\supset\tau$ we obtain a homeomorphism of torus bundles
\[
\Phi_{\sigma'\sigma}:
(U\cap\sigma\cap\sigma')\times T_\sigma\lra
(U\cap\sigma\cap \sigma')\times T_{\sigma'}.
\]
We only claim a fibre-preserving homeomorphism of total spaces here,
$\Phi_{\sigma,\sigma'}$ does in general not preserve the torus
actions. In any case, these homeomorphisms are compatible over
triple intersections, hence provide homeomorphisms of torus bundles
also for maximal cells intersecting in higher codimension. This way
we have described $\pi^{-1}(U)$ as the gluing of trivial torus
bundles over a decomposition of $U$ into closed subsets, a clutching
construction.

To prove local triviality from this description of $\pi^{-1}(U)$,
replace $U$ by a smaller neighbourhood of $x$ that is star-like with
respect to a point $y\in \Int(\sigma)$ for some maximal cell
$\sigma\ni x$. By perturbing $y$ slightly, we may assume that the
rays emanating from $y$ intersect each codimension one cell $\rho$
with $\rho\cap U\neq\emptyset$ transversaly. To obtain a fibre-preserving
homeomorphism $\pi^{-1}(U)\simeq U\times T_\sigma$, connect any other
point $y\in U$ with $x$ by a straight line segment $\gamma$. Then
$\gamma$ passes through finitely many maximal cells $\sigma'$. At
each change of maximal cell apply the relevant
$\Phi_{\sigma'\sigma''}$ to obtain the identification of the fibre
over $y$ with $T_\sigma$.
\end{proof}

\begin{lemma}
\label{Lem: Locally constant transition fcts}
The topological torus bundle $X_0^\KN|_{B\setminus\shA}\to
B\setminus\shA$ has a distinguished atlas of local trivializations
with locally constant transition maps in $U(1)^{n+1}\rtimes
\GL(n+1,\ZZ)$.
\end{lemma}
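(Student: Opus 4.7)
My starting point is the explicit codimension one transition formula of Remark~\ref{Rem: Transition of X_^KN in codim=1},
\[
\arg(u)+\arg(v) = \kappa_\rho\cdot\arg(t) + \arg(f),
\]
valid at a general point $x\in\Int(\rho)\setminus\shA$, together with the product trivialization of $(\mu^\KN)^{-1}(\sigma)$ over each maximal cell $\sigma\supset\rho$ from Lemma~\ref{Lem: mu^KN over maximal cell}. My plan is to show that, once restricted to a sufficiently small contractible open $U\subset B\setminus\shA$, the only obstruction to a locally constant transition is a $U(1)$-valued function of the base point, which can be absorbed by modifying the trivializations by a continuous $U(1)^{n+1}$-valued function on $U\cap\sigma'$.

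The key decomposition is for $\arg(f)$ on $\mu^{-1}(U\cap\rho)\setminus Z$. Since $f$ is nowhere vanishing on $\mu^{-1}(x)$ for $x\in U\cap\rho$ and the toric fibre is $\Hom(\Lambda_{\rho,x},U(1))$, the map $\arg(f)\colon \mu^{-1}(x)\to U(1)$ has a well-defined homotopy class, which is exactly the integral tangent vector $m_x\in\Lambda_{\rho,x}$ used in \cite{RS}, Construction~2.2 to extend the affine structure over $U\cap\rho$. Accordingly I would write
\[
\arg(f)(x,\lambda) \ =\ c(x) + \lambda(m_x),
\]
with $c\colon U\cap\rho\to U(1)$ continuous and $\lambda\in\Hom(\Lambda_{\rho,x},U(1))$ the torus coordinate. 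Plugging this back into the transition formula, the linear part $\lambda\mapsto\lambda(m_x)$ together with the involvement of $\arg(t)$ and the identification $m_\sigma\leftrightarrow -m_{\sigma'}+m_x$ reproduces exactly the integral affine coordinate change of $B\setminus\shA$ across $\rho$, hence lies in $\GL(n+1,\ZZ)$ and is locally constant.

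What remains is the base-dependent translation $c(x)\in U(1)$. Since $U$ is contractible and $c$ is continuous on $U\cap\rho$, I would extend it continuously to a map $\tilde c\colon U\cap\sigma'\to U(1)$ and modify the trivialization of $T_{\sigma'}$ over $U\cap\sigma'$ by the $U(1)^{n+1}$-valued function $(0,\ldots,0,\tilde c(x),0)$ (placing $\tilde c$ in the slot dual to $m_{\sigma'}$, and similarly for open gluing data). After this modification, $\Phi_{\sigma'\sigma}$ becomes locally constant with values in $U(1)^{n+1}\rtimes\GL(n+1,\ZZ)$. For the general case of two maximal cells meeting in higher codimension, the clutching construction from the proof of Proposition~\ref{Prop: (X_0)^KN->B is a torus bundle} expresses their transition as a composition of such codimension one ones, hence the same property is inherited.

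The main obstacle I anticipate is the consistency of the modifications $\tilde c$ around codimension two cells $\tau\subset B\setminus\shA$. There the various codimension one transitions emanating from $\tau$ must be chosen so that their composition around $\tau$ is the identity in $U(1)^{n+1}\rtimes\GL(n+1,\ZZ)$. On the linear side this is automatic from well-definedness of the integral affine structure on $B\setminus\shA$ established in Lemma~1 of the previous subsection. On the translational side it amounts to a cocycle condition on the continuous extensions $\tilde c$, which I would resolve by choosing $U$ small enough to be star-shaped about a vertex (as in the proof of Proposition~\ref{Prop: (X_0)^KN->B is a torus bundle}) and then propagating a single choice of trivialization over one maximal cell outward along straight rays, thereby avoiding independent choices on different cells.
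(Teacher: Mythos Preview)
Your overall strategy---start from the codimension one transition formula, extract the integral-linear part via the winding number $m_x\in\Lambda_\rho$ of $f$, and absorb the rest into a modification of trivializations---matches the paper's approach. But there is a genuine error in the key step.

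The decomposition you write down,
\[
\arg(f)(x,\lambda) \ =\ c(x) + \lambda(m_x),
\]
is not an equality; it is only a homotopy. The restriction of $\Arg(f)$ to a torus fibre $\Hom(\Lambda_{\rho,x},U(1))$ has homotopy class $m_x$, but it is in general a nonlinear function of the torus coordinate $\lambda$. For the prototypical slab function $f=1+w$ on $|w|=r<1$, the winding number is zero, yet $\Arg(1+w)$ is visibly nonconstant in $\Arg(w)$. So the ``remainder'' you call $c$ necessarily depends on $\lambda$ as well as on $x$, and your proposed modification by the base-only translation $(0,\ldots,0,\tilde c(x),0)$ does not make the transition affine, let alone locally constant.

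The fix, and what the paper actually does, is to argue at the level of isotopy: since $z^{-m}\cdot f$ is homotopic to a constant map on each connected component $V\subset\rho\setminus\shA$, the transition homeomorphism $\Phi_{\sigma'\sigma}$ is fibrewise isotopic to the affine map $(\id_{T^{n-1}},\Arg(z^m t^{\kappa_\rho}u^{-1}),\id_{U(1)})$, which is locally constant with linear part in $\GL(n+1,\ZZ)$. The translational $U(1)^{n+1}$-factor then enters only through the gluing data. In effect you must modify the trivializations by fibre-preserving homeomorphisms depending on both $x$ and $\lambda$ (the isotopy), not merely by base-dependent torus translations. Once you make this correction, your discussion of the $\GL(n+1,\ZZ)$-part and the role of $m_x$ is correct, and your codimension two worry becomes a question about compatibility of isotopies rather than of the functions $\tilde c$; the paper's proof is content to reduce the structure group componentwise and does not spell this out further.
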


\begin{proof}
It suffices to consider the attaching maps between the trivial pieces
\[(\mu^\KN)^{-1}(\sigma)= \sigma\times \Hom(\Lambda_\sigma \oplus\ZZ,U(1))\] of
Lemma~\ref{Lem: mu^KN over maximal cell} for maximal cells $\sigma,\sigma'$ with
$\rho=\sigma\cap\sigma'$ of codimension one. Let $V\subset \rho$ be a connected
component of $\rho\setminus\shA$. In Remark~\ref{Rem: Transition of X_^KN in
codim=1} we saw that the transition maps over $V$ are given by the equation
$\arg(v)=-\arg(u)+\arg(f)+\kappa_\rho\cdot \arg(t)$. Now $\mu^\KN|_V$ factors
over the Kato-Nakayama space of $X_\rho$, which can be trivialized as $V\times
\Hom(\Lambda_\rho, U(1))\times U(1)^2$. The last factor is given by
$(\Arg(u),\Arg(t))$, say, and the transition map transforms this trivialization
into the description with $(\Arg(v),\Arg(t))$. Thus this transition is the
identity on the first $n-1$ coordinates given by $\Lambda_\rho$ and on
$\Arg(t)$, while on the last coordinate it is given by $\Arg(u)^{-1}$ times the
phase of the algebraic function $f\cdot t^{\kappa_\rho}$. The homotopy class of
this map for constant $t\neq0$ is given by the winding numbers of a
generating set of closed loops in $\pi_1(T^{n-1})=\ZZ^{n-1}$. These
winding numbers define a monomial function $z^m$ on $V\times
\Hom(\Lambda_\rho,U(1))$ with $z^{-m}\cdot f$ homotopic to a constant
map. The transition function is therefore isotopic to
$(\id_{T^{n-1}},\Arg(z^m\cdot t^{\kappa_\rho}\cdot u^{-1}), \id_{U(1)})$,
fibrewise a linear transformation of $T^{n+1}=T^n\times U(1)$ with coordinates
$(\Arg(z),\Arg(u))$ for $T^n$ and $\Arg(t)$ for
$U(1)$, respectively.
\end{proof}
\begin{remark}
The translational factor $U(1)^{n+1}$ in Lemma \ref{Lem: Locally constant transition fcts} arises because non-trivial gluing data
change the meaning of monomials on the maximal cells by constants. This will be discussed in further detail in \S \ref{Sec: Topological Torus Bundles: a Study via Gluing Data}.
\end{remark}
We provide a more comprehensive discussion on the topological classification of torus bundles with transition maps in $U(1)^{n+1}\rtimes
\GL(n+1,\ZZ)$, as in Lemma \ref{Lem: Locally constant transition fcts}, in \S \ref{Sec: Topological Torus Bundles: a Study via Gluing Data}. As stated in Proposition \ref{Prop: Topological torus fibrations}, if $\mu: X\to B$ is such a a fibre bundle, then up to isomorphism, it is
uniquely given by the local system $\Lambda= R^1\mu_*\ul\ZZ$ with fibres $\ZZ^n$ and a class $[X]\in H^1(B,\check\Lambda\otimes U(1))$. For the Kato-Nakayama space $X_0^\KN|_{B\setminus\shA}$, the governing bundle $R^1\mu_*\ul\ZZ$ is identified as follows.

Recall that on the intersection complex, we fix a multivalued piecewise affine function
$\varphi$ as in \cite[Defn 1.10]{theta}. As in \cite{logmirror1}, we assume $\varphi$ takes values in $Q=\NN$. This data, by \cite[Construction~1.14]{theta}, defines an integral
affine manifold $\BB_\varphi$ with an integral affine action by
$(\RR,+)$, making $\BB_\varphi$ a torsor over $B=\BB_\varphi/\RR$. This torsor comes with a
canonical piecewise affine section locally representing $\varphi$.
The pull-back of $\Lambda_{\BB_\varphi}$ under this section defines
an extension
\begin{equation}
\label{Eqn: Aff^* extension}
0\lra \ul\ZZ\lra\shP \lra \Lambda\lra 0
\end{equation}
of $\Lambda$ by the constant sheaf $\ul\ZZ$ on $B\setminus\shA$. 
\begin{definition}
\label{Def: The chern class}
The extension
class of the sequence \eqref{Eqn: Aff^* extension} equals \[c_1(\varphi) \in \Ext^1(\Lambda,\ul\ZZ) =
H^1(B\setminus\shA, \check\Lambda),\] 
called the first Chern class of $\varphi$.
\end{definition}
For the mirror dual interpretation of the Chern class of $\varphi$ see \cite[Equation~(1.5)]{theta}.
\begin{remark}
\label{Rem: monodromy representation}
Much as in the discussion of the radiance obstruction in \cite{logmirror1},
\S1.1, the first Chern class $c_1(\varphi)$ can also be interpreted as an element in
group cohomology $H^1(\pi_1(B\setminus\shA,x), \check\Lambda_x)$. Here
$\check\Lambda_x\simeq\ZZ^n$ is a $\pi_1(B\setminus\shA,x)$-module by means of
parallel transport in $\check\Lambda$ along closed loops based at some fixed
$x\in B\setminus\shA$. As an element in group cohomology, $c_1(\varphi)$ is
given by a twisted homomorphism $\lambda: \pi_1(B\setminus\shA,x) \to
\check\Lambda_x$, $\gamma\mapsto \lambda_\gamma$, determining the monodromy of
$\shP$ around a closed loop $\gamma$ based at $x$ as follows:
\[
\Lambda_x\oplus \ZZ \lra \Lambda_x\oplus\ZZ,\quad
(v,a)\longmapsto \big(T_\gamma\cdot v,\lambda_\gamma\cdot
v+a\big).
\]
Here $T_\gamma\in\GL(\Lambda_x)$ is the monodromy of $\Lambda$
along $\gamma$ and we have chosen an isomorphism $\shP_x\simeq
\Lambda_x\oplus\ZZ$. Being a twisted homomorphism means that for a
composition $\gamma_1\gamma_2$ of two loops based at $x$,
\[
\lambda_{\gamma_1\gamma_2}= \lambda_{\gamma_2} \circ
T_{\gamma_1}+\lambda_{\gamma_1}.
\]
Here we use the convention that $\gamma_1\gamma_2$ runs through $\gamma_2$
first, and hence $T_{\gamma_1\gamma_2}=T_{\gamma_2}\circ T_{\gamma_1}$. This
interpretation is also compatible with the fact that under discrete Legendre
duality, the roles of $c_1(\varphi)$ and the radiance obstruction swap
(\cite{logmirror1}, Proposition~1.50,3).
\end{remark}

\begin{lemma}
\label{Lem: R^1mu_*ZZ for X_0^KN}
Writing $\breve\mu$ for the restriction of $\mu^\KN: X_0^\KN\to B$
to $B\setminus\shA$, there exists a canonical isomorphism of local
systems $R^1\breve\mu_*\ul\ZZ =\shP$, where $\shP$ is as in \eqref{Eqn: Aff^* extension}.
\qed
\end{lemma}
\begin{proof}
For each point $x\in B$ there is a chart for the log structure on
$X_0$ with monoid $\CC[\shP^+_x]$, where $\shP_x^+\subset \shP_x$ is
a certain submonoid of positive elements with $(\shP_x^+)^\gp
=\shP_x$ (\cite{theta}, \S2.2 and \cite{affinecomplex},
Construction~2.7). Hence from the local description of
$X_0^\KN|_{B\setminus\shA}$ in Lemma~\ref{Lem: mu^KN over maximal
cell} and Theorem~\ref{Thm: (X_0)^KN->B is a torus bundle},
the result follows immediately.
\end{proof}

In view of Lemma~\ref{Lem: R^1mu_*ZZ for X_0^KN}, an immediate corollary is a complete topological
description of $X_0^\KN|_{B\setminus\shA}$ over large open sets.

\begin{corollary}
\label{Cor: Locally constant transition fcts}
Denote by $\tilde\shA\subset B$ a closed subset containing $\shA$ and
such that $B\setminus\tilde\shA$ retracts to a one-dimensional cell
complex. Then as a topological torus bundle,
$X_0^\KN|_{B\setminus\tilde\shA}$ is isomorphic to $\Hom(\shP, \ul
U(1))$.
\end{corollary}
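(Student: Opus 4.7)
The plan is to apply Proposition~\ref{Prop: Topological torus fibrations} to the restricted map
\[
\mu^\KN\big|_{B\setminus\tilde\shA}:\ X_0^\KN\big|_{B\setminus\tilde\shA}\ \lra\ B\setminus\tilde\shA,
\]
so the first step is to verify its hypotheses. By Proposition~\ref{Prop: (X_0)^KN->B is a torus bundle} this map is a bundle of $(n+1)$-tori over the larger open set $B\setminus\shA\supset B\setminus\tilde\shA$, and Lemma~\ref{Lem: Locally constant transition fcts} supplies a distinguished atlas of local trivializations whose transition functions are locally constant with values in $U(1)^{n+1}\rtimes\GL(n+1,\ZZ)$. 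Hence Proposition~\ref{Prop: Topological torus fibrations} applies with $r=n+1$.

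Under that proposition, the isomorphism class of the bundle is determined by the local system $R^1(\mu^\KN|_{B\setminus\tilde\shA})_*\ul\ZZ$ together with a class in $H^1(B\setminus\tilde\shA,\check\shP\otimes U(1))$. The local system is identified with $\shP$ by Lemma~\ref{Lem: R^1mu_*ZZ for X_0^KN}, restricted from $B\setminus\shA$ to $B\setminus\tilde\shA$. The proposition further yields the asserted isomorphism $X_0^\KN|_{B\setminus\tilde\shA}\simeq \Hom(\shP,\ul U(1))$ as soon as the obstruction
\[
\delta\big(\big[X_0^\KN|_{B\setminus\tilde\shA}\big]\big)\in H^2(B\setminus\tilde\shA,\check\shP)
\]
to the existence of a continuous section vanishes.

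The crux is precisely this vanishing, and it is where the retraction hypothesis enters. Since $B\setminus\tilde\shA$ deformation retracts onto a one-dimensional CW complex $K$, and cohomology with coefficients in a locally constant sheaf is a homotopy invariant, we have
\[
H^2(B\setminus\tilde\shA,\check\shP)\iso H^2(K,\check\shP|_K)=0,
\]
the last equality because $K$ carries no cells of dimension $\ge 2$. Thus the obstruction vanishes automatically, and the corollary follows.

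I do not anticipate any serious difficulty in this argument; the only bookkeeping point is that $\check\shP$ transports across the deformation retraction, which is immediate from the homotopy invariance of local systems under a deformation retract. All the geometric content of the corollary has already been absorbed into the preceding Lemmas~\ref{Lem: Locally constant transition fcts} and~\ref{Lem: R^1mu_*ZZ for X_0^KN} and Propositions~\ref{Prop: (X_0)^KN->B is a torus bundle} and~\ref{Prop: Topological torus fibrations}; the corollary itself is just the resulting dimension-counting consequence.
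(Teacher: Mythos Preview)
Your proof is correct and follows essentially the same approach as the paper: apply Lemma~\ref{Lem: Locally constant transition fcts} to reduce to a torus bundle with locally constant transition functions, invoke Proposition~\ref{Prop: Topological torus fibrations} so that the obstruction to a section lies in $H^2(B\setminus\tilde\shA,\check\shP)$, and observe this group vanishes by the retraction hypothesis. Your version is slightly more explicit in citing Lemma~\ref{Lem: R^1mu_*ZZ for X_0^KN} for the identification of the local system and in noting the homotopy invariance needed to transport $\check\shP$ along the retraction, but the argument is the same.
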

\begin{proof}
By Lemma~\ref{Lem: Locally constant transition fcts} we can treat
$X_0^\KN|_{B\setminus\tilde\shA}$ as a torus bundles with locally constant
transition functions in $U(1)^{n+1} \rtimes \GL(n+1,\ZZ)$. By
Proposition~\ref{Prop: Topological torus fibrations} the obstruction to the
existence of a continuous section then lies in $H^2(B\setminus\tilde\shA,
\shP^\vee)$. This cohomology group vanishes by the assumption on the existence
of a retraction.
\end{proof}

We should emphasize that in Corollary~\ref{Cor: Locally constant transition fcts}, we have first used Lemma~\ref{Lem:
Locally constant transition fcts} to reduce to the case of locally constant
transition functions. As discussed in Remark~\ref{Rem: Transition of X_^KN in
codim=1}, the transition functions for $X_0^\KN|_{B\setminus\shA} \to
B\setminus\shA$ between the canonical charts coming from toric geometry are not
locally constant, and hence this corollary makes a purely topological statement.

%===========================================================

\subsection{Parametrizing torus fibrations by gluing data}
\label{Sec: Topological Torus Bundles: a Study via Gluing Data}
To obtain a canonical
description of the fibration on $X_0^\KN$, we study a refinement of
\eqref{Eqn: Aff^* extension}. 
Let $\tilde\shP$ be the sheaf of monomials of the form 
$a z^p$ with $a \in \CC$ and $p$ a local section of $\shP$.
Note that the multiples of the deformation parameter $t$ are well-defined on all charts and so define a constant subsheaf with
fibres $\ZZ\oplus\CC^\times$ of $\tilde\shP$, and so a
refinement of
\eqref{Eqn: Aff^* extension}:
\begin{equation}
\label{Eqn: tilde shP}
0\lra \ul\ZZ\oplus \ul\CC^\times\lra \tilde\shP\lra \Lambda\lra 0.
\end{equation}
The extension class
\[
\big(c_1(\varphi),s \big)\in\Ext^1 (\Lambda,\ul\ZZ\oplus
\ul \CC^\times) = H^1(B\setminus\shA, \check\Lambda)\oplus 
H^1(B\setminus\shA, \check\Lambda\otimes\CC^\times)
\]
has as second component the restriction to $B\setminus\shA$ of the gluing data
$s$, as discussed in \cite{theta}, Remark~5.16.\footnote{The discussion in
\cite{theta} is on the complement of a part $\tilde\Delta\subset B$ of the
codimension two skeleton of the barycentric subdivision. There is a retraction
of $\shA$ to a subset of $\tilde\Delta$. However, the discussion on monomials
works on any cell $\tau\in \P$ not contained in $\shA$ and with $\shX\to \Spec R$
locally toroidal at some point of $X_\tau$.} Furthermore, dividing out
$\RR_{>0}\subset\CC^\times$ defines an extension $\widehat\shP$ of $\Lambda$ by
$\ul\ZZ\oplus\ul U(1)$ with class
\begin{equation}
\label{extension-class}
\big(c_1(\varphi), \Arg(s)\big) \in \Ext^1(\Lambda,\ul\ZZ\oplus \ul U(1))=
H^1(B\setminus\shA, \check\Lambda)\oplus H^1(B\setminus\shA,
\check\Lambda\otimes U(1)).
\end{equation}
Taking this latter extension and the
extension of $\Lambda$ by $\ul\ZZ$ from \eqref{Eqn: Aff^* extension} as
two columns, we obtain the following commutative diagram with exact
rows and columns:\\[1ex]
\begin{equation}
\label{Eqn: X_0^KN as an extension}
\begin{CD}
@. @. 0@. 0\\
@. @. @VVV @VVV\\
1@>>> \ul U(1) @>>> \ul\ZZ\oplus\ul U(1) @>>> \ul{\ZZ} @>>>0\\
@. @V{=}VV @VVV @VVV\\
1@>>> \ul U(1) @>>> \widehat\shP @>>> \shP @>>>0\\
@. @. @VVV @VVV\\
@. @. \Lambda @>{=}>>\Lambda\\
@. @. @VVV @VVV\\
@. @. 0 @. 0\\[2ex]
\end{CD}
\end{equation}
Note that the extension of $\ul\ZZ$ by $\ul U(1)$ in the second row
is trivial by construction. The middle row now defines an extension
of $\shP$ by  $\ul U(1)$.

%todo: say somewhere: The various charts for $(X_0,\M_{X_0})$ are related
%by parallel transport inside $\shP$, but the monomials may be
%rescaled due to non-trivial \emph{gluing data} \S \ref{Gluing data}. 

We use the extension $\widehat\shP$ of $\Lambda$, defined in \eqref{Eqn: X_0^KN as an extension}, to give a \emph{canonical}
description of $X_0^\KN$ on a large subset of $B\setminus\shA$,
assuming the open gluing data \emph{normalizes} the toric log
Calabi-Yau space $(X_0,\M_{X_0})$ (\cite{logmirror1},
Definition~4.3). Now, let
\[B'= \bigcup_{\sigma\in\P^\max}\Int\sigma \cup \big\{ v\in
\P^{[0]}\big\}\]
be the subset of $B$ covered by the interiors of
maximal cells and the vertices of $\P$. Assume that the toric log
Calabi-Yau space $(X_0,\M_{X_0})$ is normalized with respect to open
gluing data $s$ as discussed in \cite[\S4.4]{logmirror1}. Denote by $\widehat\shP$ the extension of $\Lambda$
by $\ZZ\oplus U(1)$ in \eqref{Eqn: X_0^KN as an extension}. Then, we obtain the following description of the Kato--Nakayama space of $X_0$, obtained from affine geometry.
\begin{remark}
In connection with the extension class $\widehat\shP$ defined in \eqref{extension-class}, we would like to emphasize the role of the normalization condition of slab functions as described in \cite[\S4.4]{logmirror1}. The canonical description of the Kato--Nakayama space over maximal cells of the central fiber in Lemma~\ref{Lem: mu^KN
over maximal cell} is based on toric monomials. To extend this
description over a point $x\in B\setminus\shA$ in a codimension one
cell $\rho$, we need the gluing equation~\eqref{Eqn: Gluing
equation} to be monomial along the fibres of the momentum map. This
condition means that the restriction of $f$ to a fibre of the
momentum map $X_\rho\to\rho$ is monomial. This is a strong condition
that in case the Newton polyhedron of $f$ is full-dimensional fails
everywhere except at the zero-dimensional toric strata of $X_\rho$.
The normalization condition then says that the non-trivial gluing of
the torus fibres over the vertices only comes from the gluing
data, hence is entirely determined by the extension class of
$\widehat\shP$ in \eqref{extension-class}.
\end{remark}
 
\begin{proposition}
\label{Prop: X_0^KN versus Hom(widehat shP,U(1))}
There is a
canonical homeomorphism
\[
\Hom^\circ(\widehat\shP, \ul U(1))|_{B'} \stackrel{\simeq}{\lra}
X_0^\KN\big|_{B'}
\]
of topological fibre bundles over $B'$,  where $\Hom^\circ(\widehat\shP,\ul U(1)) \subset \Hom (\widehat\shP, \ul
U(1))$ is the space of fibrewise homomorphisms restricting to the
identity on $\ul U(1)\subset \widehat\shP$.
\end{proposition}

\begin{proof}
%From its origin in the bundle $\tilde\shP$ over $B$ carrying the monomials, 
We
obtain a canonical description of $\widehat\shP$ over $B'$ as follows. Over a
maximal cell $\sigma$, we have a canonical isomorphism of $\widehat\shP|_\sigma$
with the trivial bundle with fibre $\Lambda_\sigma\oplus\ZZ\oplus U(1)$. Then if
$\sigma,\sigma'\in\P^\max$ and $v\in\sigma\cap\sigma'$ is a vertex, the open
gluing data $s$ define a multiplicative function $s_{v,\sigma}:\Lambda_\sigma\to
\CC^\times$, and similarly for $\sigma'$ \cite[\S5.1]{logmirror1}. We glue the trivial bundles on
$\sigma,\sigma'$ by means of $\Arg \big(s_{v,\sigma'}\cdot
s_{v,\sigma}^{-1}\big)$ on the $U(1)$-factor. The gluing on the discrete part
$\Lambda_\sigma\oplus\ZZ$ is governed by a local representative of the MPL
function $\varphi$, to yield $\shP$ \cite[\S1.1]{RS2}. Accordingly, we obtain a description of
$\Hom^\circ(\widehat\shP, \ul U(1))$ by gluing trivial pieces $\sigma\times
\Hom(\Lambda_\sigma\oplus\ZZ, U(1))$. Note that under the \emph{normalization condition} on the slab functions $f_{\rho,x}$ \cite[\S4.4]{logmirror1}, the gluing of $X_0^\KN$
from the same canonical trivial pieces in Lemma~\ref{Lem: mu^KN over
maximal cell} is given by exactly the same procedure over vertices.
Indeed, given a codimension one cell $\rho$ and a vertex $v\in\rho$,
denoting by $f_{\rho,v}$ the slab function attached to the connected component of $\rho \setminus \Delta$ containing $v$,
in the formula $\arg(u)+\arg(v)= \arg(f_{\rho,v})
+\kappa_\rho\cdot\arg(t)$ given as in Equation \eqref{Eqn: Gluing equation} the term involving $f_{\rho,v}$ disappears
due to the normalization condition.
\end{proof}

\begin{remark}
We note that the class in $H^1(B', \shP^\vee\otimes U(1))$ defining $X_0^\KN|_{B'}$
as a topological torus bundle with locally constant transition functions in
$U(1)^{n+1}\rtimes \GL(n+1,\ZZ)$ according to Lemma~\ref{Lem: Locally constant
transition fcts}, agrees with the extension class $\widehat\shP$ in \eqref{Eqn: X_0^KN as an extension}. This is immediate from Remark~\ref{Rem: Hom^0}.
\end{remark}

To describe the fibres of $\delta^\KN: X_0^\KN\to (O^\ls)^\KN=S^1$, we
need another extension. For $\phi\in U(1)$ denote by
$\Psi_\phi:\ZZ\oplus U(1)\to U(1)$ the homomorphism mapping $(1,1)$ to
$\phi$ and inducing the identity on $U(1)$. We have a morphism of
extensions
\[
\begin{CD}
0@>>> \ul\ZZ\oplus \ul U(1) @>>> \widehat \shP @>>> \Lambda @>>>0\\
@. @V{\Psi_\phi}VV @VVV @VV{\id}V\\
0@>>> \ul U(1) @>>> \widehat \shP_\phi @>>> \Lambda @>>>0,
\end{CD}
\]
with the lower row having extension class $\Psi_\phi(s) \in \Ext^1(\Lambda, \ul
U(1))= H^1(B\setminus\shA, \check\Lambda\otimes \ul U(1))$.

\begin{corollary}
With the same assumptions as in Proposition~\ref{Prop: X_0^KN versus Hom(widehat
shP,U(1))}, the fibre of $\delta^\KN: X_0^\KN|_{B'}\to (\Spec O^\ls)^\KN=
S^1= U(1)$ over $\phi\in U(1)$ is isomorphic to the $n$-torus bundle
with local system $\Lambda$ and with extension class $\Psi_\phi(s) \in H^1(B',
\check\Lambda\otimes\ul U(1))$.
\end{corollary}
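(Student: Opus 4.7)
The plan is to trace through the homeomorphism of Proposition~\ref{Prop: X_0^KN versus Hom(widehat shP,U(1))} and recognise the fibre of $\delta^\KN$ as the set of homomorphisms fitting into the pushout diagram displayed just before the corollary.

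First I would identify the map $\delta^\KN$ under the homeomorphism $X_0^\KN|_{B'}\simeq\Hom^\circ(\widehat\shP,\ul U(1))$. From the set-theoretic description of a point $(x,\theta)\in X_0^\KN$ in \eqref{Eqn: (x,theta) description of KN space}, functoriality of the Kato-Nakayama construction gives $\delta^\KN(x,\theta)=\theta(t)\in U(1)$, where $t\in\M_{X_0,x}$ is the pullback of the generator of $\ol\M_{O^\ls}=\NN$. In terms of the extension \eqref{Eqn: Aff^* extension}, the section $t$ is precisely the generator of the subsheaf $\ul\ZZ\subset\shP$, and hence in diagram~\eqref{Eqn: X_0^KN as an extension} corresponds to the image of $(1,0)\in\ul\ZZ\oplus\ul U(1)$ inside $\widehat\shP$. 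Since any $\varphi\in\Hom^\circ(\widehat\shP_x,U(1))$ restricts to the identity on $\ul U(1)$, its value $\varphi(1,0)$ coincides with $\varphi(1,1)$. Thus $\delta^\KN$ translates to the evaluation map $\varphi\mapsto\varphi(1,0)$.

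Next, fixing $\phi\in U(1)$, the condition $\delta^\KN(\varphi)=\phi$ combined with $\varphi|_{\ul U(1)}=\id$ amounts exactly to the assertion that the composition
\[
\ul\ZZ\oplus\ul U(1)\ \longhookrightarrow\ \widehat\shP\ \stackrel{\varphi}{\lra}\ \ul U(1)
\]
equals $\Psi_\phi$. By the universal property of the pushout defining $\widehat\shP_\phi$, such $\varphi$ are in natural bijection with elements of $\Hom^\circ(\widehat\shP_\phi,\ul U(1))$. Because this bijection is fibrewise over $B'$ and continuous for the canonical topologies (in fact compatible with the $\shHom(\Lambda,\ul U(1))$-torsor structures on both sides), it is a homeomorphism of fibre bundles
\[
(\delta^\KN)^{-1}(\phi)\ \simeq\ \Hom^\circ(\widehat\shP_\phi,\ul U(1)).
\]

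Finally, Remark~\ref{Rem: Hom^0} identifies $\Hom^\circ(\widehat\shP_\phi,\ul U(1))$ with the $n$-torus bundle over $B'$ having local system $\Lambda$ and with class in $H^1(B',\check\Lambda\otimes U(1))$ equal to the extension class of the bottom row of the pushout. By functoriality of extension classes, this class is the image under $\Psi_\phi$ of the class $(c_1(\varphi),s)$ of $\widehat\shP$, which is precisely $\Psi_\phi(s)$ in the notation of the statement. The only point that requires genuine care is the identification in the first paragraph of $\delta^\KN$ with evaluation on the distinguished generator $(1,0)$; once this is in place the corollary is a formal consequence of Proposition~\ref{Prop: X_0^KN versus Hom(widehat shP,U(1))} and Remark~\ref{Rem: Hom^0}.
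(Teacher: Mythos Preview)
Your proof is correct and follows essentially the same route as the paper's: identify $\delta^\KN$ as evaluation on the distinguished $\ZZ$-generator inside $\widehat\shP$, read off the fibre over $\phi$, and then match the extension class. The paper phrases this more concretely via the local trivializations of Lemma~\ref{Lem: mu^KN over maximal cell} and an appeal to ``tracing through the gluing descriptions'', whereas you invoke the pushout universal property and Remark~\ref{Rem: Hom^0} to package the same computation; the content is the same. One small notational wrinkle: your element ``$(1,0)\in\ul\ZZ\oplus\ul U(1)$'' should be $(1,1)$ since $U(1)$ is written multiplicatively throughout the paper---your own next sentence shows you mean the image of $1\in\ZZ$, so this is only a cosmetic slip.
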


\begin{proof}
By Lemma~\ref{Lem: mu^KN over maximal cell}, the restriction of $\delta^\KN$ to
the canonical piece $\sigma\times
\Hom(\Lambda_\sigma\oplus\ZZ,U(1))\subset X_0^\KN$ is given by
composing with the inclusion $\ZZ\to \Lambda_\sigma\oplus\ZZ$. Thus
$(\delta^\KN)^{-1}(\phi)$ consists of homomorphisms $\Lambda_\sigma\oplus\ZZ\to
U(1)$ mapping $(0,1)$ to $\phi$. The statement now follows by tracing through
the gluing descriptions of $\widehat\shP_\phi$ and of the extension class
defined by $(\delta^\KN)^{-1} (\phi)$.
\end{proof}

%===========================================================
%===========================================================

%===========================================================

\section{Real structures in log geometry}
\label{Sect: Real log spaces}
Recall that for a scheme $\bar X$ defined over $\RR$ the Galois group
$G(\CC/\RR)=\ZZ/2\ZZ$ acts on the associated complex scheme $X=\bar
X\times_{\Spec\RR}\Spec\CC$ by means of the universal property of the cartesian
product
\[\begin{CD}
X@>>> \bar X\\
@VVV@VVV\\
\Spec\CC@>>>\Spec\RR.
\end{CD}\]
The generator of the Galois action thus acts on $X$ as an involution
of schemes over $\RR$ making the following diagram commutative
\begin{equation}\label{Diag: iota}
\begin{CD}
X@>\iota >> X\\
@VVV@VVV\\
\Spec\CC@>\mathrm{conj}>>\Spec\CC.
\end{CD}
\end{equation}
Here $\mathrm{conj}$ denotes the $\RR$-linear
automorphism of $\Spec\CC$ defined by complex conjugation.

Conversely, a \emph{real structure} on a complex scheme $X$ is an involution
$\iota: X\to X$ of schemes over $\RR$ fitting into the commutative
diagram~\eqref{Diag: iota}. It is not hard to see that if $X$ is separated
and any two points are contained in an open affine subset (e.g.\ if $X$
is quasi-projective) then $X$ is defined over $\RR$ with $\iota$ the generator
of the Galois action (\cite{Hartshorne}, II Ex.4.7). We call pair $(X,\iota)$ a
\emph{real scheme}. By abuse of notation we usually omit $\iota$ when talking
about real schemes.

\begin{definition}\label{Def: Real log structure}
Let $(X,\M_X)$ be a log scheme over $\CC$ with a real structure
$\iota_{X} :X\to X$ on the underlying scheme. Then a \emph{real
structure} on $(X,\M_X)$ (\emph{lifting $\iota_{X}$)} is an
involution
\[
\tilde\iota_{X}= (\iota_X,\iota_X^\flat): (X,\M_X)\lra (X,\M_X)
\]
of log schemes over $\RR$ with underlying scheme-theoretic morphism
$\iota_{X}$. The data consisting of $(X,\M_X)$ and the involutions
$\iota_X$, $\iota_X^\flat$ is called a \emph{real log scheme}.
\end{definition}

In talking about real log schemes the involutions $\iota_X$,
$\iota^\flat_X$ are usually omitted from the notation. We also
sometimes use the notation $\iota_X$ for the involution of the log
space $(X,\M_X)$ and in this case write $\ul\iota_X$ if we want
to emphasize we mean the underlying morphism of schemes.

\begin{definition}
\label{Def: Category of real log schemes}
Let $(X,\M_X)$ and $(Y,\M_Y)$ be real log schemes. A morphism
$f:(X,\M_X)\to (Y,\M_Y)$ of real log schemes is called \emph{real} if the
following diagram is commutative.
\[\begin{CD}
f^{-1}\iota_Y^{-1}\M_Y @>{f^{-1}\iota^\flat_Y}>> f^{-1}\M_Y\\
@V{\iota_X^{-1}f^\flat}VV@VV{f^\flat}V\\
\iota_X^{-1}\M_X@>{\iota^\flat_X}>>\M_X.
\end{CD}\]
Here the left-hand vertical arrow uses the identification
$\iota_Y\circ f= f\circ\iota_X$.
\end{definition}

\begin{remark}
For a real morphism of real log schemes $f:(X,\M_X) \to (Y,\M_Y)$
the following diagram commutes.
\[\begin{xy}
  \xymatrix @!=0.8pc {
   &  f^{-1}\iota_Y^{-1}\O_Y \ar[rr] \ar[dd]  &  &  f^{-1}\O_Y \ar[dd]  \\
   f^{-1}\iota_Y^{-1}\M_Y \ar[rr] \ar[dd] \ar[ru]  &  &  f^{-1}\M_Y 
   \ar[dd] \ar[ru]  &  \\ 
   &  \iota_X^{-1}\O_X \ar[rr]  &  &  \O_X  \\
   \iota_X^{-1}\M_X \ar[rr] \ar[ru]  &  & \M_X \ar[ru]  &
  }
\end{xy}\]
In fact, commutativity on the (1)~bottom, (2)~top, (3)~right
(4)~left (5)~back and (6) front faces follows from the assumptions
that (1)~$(X,\M_X)$ is a real log scheme, (2)~$(Y,\M_Y)$ is a real
log scheme, (3)~$f$ is a morphism of log schemes, (4)~$\iota_X^{-1}$
applied to the right face plus the identity $f\circ \iota_X=
\iota_Y\circ f$, (5)~$f$ induces a real morphism on the underlying
schemes and (6)~$f$ is a real morphism of real log structures.
\end{remark}

Given a real log scheme $(X,\M_X)$ with $\alpha:\M_X\to \O_X$ the structure
homomorphism, for any geometric point $\bar x\to X$ we have a commutative
diagram
\[\begin{CD}
\M_{X,\bar x}@>{\iota^\flat}>>
\M_{X,\iota(\bar x)}@>{\iota^\flat}>>
\M_{X,\bar x}\\
@V\alpha_{\bar x}VV@V\alpha_{\iota(\bar x)}VV@V\alpha_{\bar x}VV\\
\O_{X,\bar x}@>{\iota^\sharp}>>
\O_{X,\iota(\bar x)}@>{\iota^\sharp}>>
\O_{X,\bar x}.
\end{CD}\]
The compositions of the maps in the two horizontal
sequences are the identity on $\M_{X,\bar x}$ and on $\O_{X,\bar
x}$, respectively. For the next result recall that if $X$ is a
pure-dimensional scheme and $D\subset X$ is a closed subset of
codimension one, then the subsheaf  $\M_{(X,D)}\subset \O_X$ of
regular functions with zeros contained in $D$ defines the
\emph{divisorial log structure} on $X$ associated to $D$.

\begin{proposition}\label{Prop: divisorial log structure}
Let $X$ be a pure-dimensional scheme, $D\subset X$ a closed
subset of codimension one and let $\M_X=\M_{(X,D)}$ be the
associated divisorial log structure. Then a real structure $\iota$
on $X$ lifts to $\M_X$ iff $\iota(D)=D$. Moreover, in this case the
lift $\iota^\flat$ is uniquely determined as the restriction of
$\iota^\sharp$ to $\M_{(X,D)}\subset \O_X$.
\end{proposition}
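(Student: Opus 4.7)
The plan is to exploit the fact that for a divisorial log structure the structure map $\alpha: \M_{(X,D)} \hookrightarrow \O_X$ is literally the inclusion of subsheaves; this will pin down the lift uniquely and reduce existence to a purely set-theoretic condition on $D$.

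First I would settle uniqueness. Any candidate $\iota^\flat: \iota^{-1}\M_{(X,D)} \to \M_{(X,D)}$ must satisfy $\alpha \circ \iota^\flat = \iota^\sharp \circ \alpha$, and since both copies of $\alpha$ are inclusions, this equation forces $\iota^\flat$ to coincide with the restriction of $\iota^\sharp: \iota^{-1}\O_X \to \O_X$ to $\iota^{-1}\M_{(X,D)}$. The only remaining question is therefore whether $\iota^\sharp$ carries $\iota^{-1}\M_{(X,D)}$ into $\M_{(X,D)}$. If it does, the restriction is automatically a monoid homomorphism (as restriction of a ring map), automatically an involution (since $\iota^\sharp$ is), and automatically compatible with units (since a ring isomorphism preserves $\O_X^\times$).

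For the implication $\iota(D) = D \Rightarrow$ the lift exists, I would take an open $V \subseteq X$ and a section $f \in (\iota^{-1}\M_{(X,D)})(V) \cong \M_{(X,D)}(\iota(V))$, so $f$ is regular on $\iota(V)$ and invertible on $\iota(V) \setminus D$. Since $\iota^\sharp(f) \in \O_X(V)$ is invertible at $x \in V$ iff $f$ is invertible at $\iota(x) \in \iota(V)$, and since the hypothesis yields $\iota(V \setminus D) = \iota(V) \setminus D$, the section $\iota^\sharp(f)$ is invertible on $V \setminus D$, i.e., $\iota^\sharp(f) \in \M_{(X,D)}(V)$.

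For the converse, assuming a lift $\iota^\flat$ exists, I would recover $D$ intrinsically from the log structure as the support $\{x \in X : \ol\M_{(X,D), \bar x} \ne 0\}$: at a point $x \notin D$ every germ of $\M_{(X,D)}$ is already a germ of $\O_X^\times$, so the quotient $\ol\M_{(X,D), \bar x}$ vanishes, while on $D$ a local defining equation of $D$ provides a non-invertible germ in $\M_{(X,D)}$. Since $\iota^\flat$ induces an isomorphism $\iota^{-1}\ol\M_{(X,D)} \iso \ol\M_{(X,D)}$ of quotient sheaves, it must preserve this support, yielding $\iota(D) = D$. The hard part — namely that $\alpha$ is a genuine inclusion — is built into the definition of a divisorial log structure, so the whole argument collapses into this short set-theoretic dictionary between preservation of $\M_{(X,D)}$ under $\iota^\sharp$ and preservation of $D$ under $\iota$.
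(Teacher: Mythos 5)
Your uniqueness argument and the implication $\iota(D)=D\Rightarrow$ existence of the lift are correct and essentially the same as the paper's: since $\alpha:\M_{(X,D)}\to\O_X$ is an inclusion of subsheaves, any lift is forced to be the restriction of $\iota^\sharp$, and $\iota(X\setminus D)=X\setminus D$ guarantees that $\iota^\sharp$ carries functions invertible off $D$ to functions invertible off $D$.

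The converse, however, contains a genuine gap. You recover $D$ as $\supp\ol\M_{(X,D)}$ and justify the inclusion $D\subset\supp\ol\M_{(X,D)}$ by invoking ``a local defining equation of $D$''. But $D$ is only assumed to be a closed subset of codimension one; it need not be locally principal, not even set-theoretically. For instance, take $X=\Spec\CC[x,y,z,w]/(xy-zw)$ and $D=V(x,z)$: the local class group at the vertex is $\ZZ$, generated by the class of $D$, so any germ $f\in\O_{X,0}$ whose zero locus is contained in $D$ is a unit, and hence $\ol\M_{(X,D),0}=0$ even though $0\in D$. Thus $\supp\ol\M_{(X,D)}$ can be a proper (and non-closed) subset of $D$, and the fact that $\iota$ preserves this support does not by itself yield $\iota(D)=D$. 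The argument is repairable: at the generic point of each irreducible component $D_\mu$ one can choose a nonzerodivisor $f$ vanishing on $D$ and shrink the open set so that $V(f)=U\cap D_\mu$; this shows the support contains a dense open subset of each $D_\mu$, and since $\iota$ is a homeomorphism preserving the support and $D$ is closed, $\iota(D)=D$ follows by taking closures. This density-plus-closure step is exactly what the paper's proof carries out explicitly (it produces such an $f$, deduces $V(f\circ\iota)\subset D$ from the compatibility diagram, and concludes $\iota(U\cap D_\mu)\subset D$ before closing up), and it is the ingredient missing from your write-up.
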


\begin{proof}
Let $\iota: X\to X$ be a real structure on $X$ with $\iota(D)=D$.
Then $\iota(X \setminus D)= X \setminus D$ and hence $\iota^\sharp$
restricts to an isomorphism $\varphi:\iota^{-1}\O_{X\setminus
D}^\times \to \O_{X\setminus D}^\times$. By definition of
$\shM_{(X,D)}$, $\varphi$ induces an isomorphism
$\iota^\flat:\iota^{-1}\shM_{(X,D)} \to \shM_{(X,D)}$. Hence we get
a real structure $(\iota,\iota^\flat): (X,\M_{(X,D)})  \to
(X,\M_{(X,D)})$ on $(X,\M_{(X,D)})$ lifting $\iota$.

Conversely, let the real structure $\iota: X\to X$ lift to
$(\iota,\iota^\flat): (X,\M_{(X,D)})\lra (X,\M_{(X,D)})$. In other words,
there exists a morphism $\iota^\flat: \iota^{-1}\M_{(X,D)} \lra
\M_{(X,D)}$ making the following diagram commute.
\begin{equation}
\label{Eqn: tilde iota exists}
\begin{CD}
\iota^{-1}\M_{(X,D)} @>{\iota^{-1}\alpha}>> \iota^{-1}\O_X\\
@V{\iota^\flat}VV @VV{\iota^\sharp}V \\
\M_{(X,D)} @>{\alpha}>>      \O_X 
\end{CD}
\end{equation}
Let $D=\bigcup_\mu D_\mu$ be the decomposition into irreducible components.
Since $\iota^2=\id_X$ it suffices to show $\iota(D)\subset D$, or
$\iota(D_\mu)\subset D$ for every $\mu$. Fix $\mu$ and let $U\subset X$ be an
affine open subscheme with $U\cap D_\mu\neq\emptyset$. Let $f\in \O_X(U)$
be a non-zero divisor with $D\subset V(f)$. Then $U\cap
D_\mu\subset U\cap D\subset V(f)$. Write $V(f)=(D_\mu\cap U)\cup E$ with
$E\subset V(f)$ the union of the irreducible components of $V(f)$ different from
$D_\mu$. Replacing $U$ by $U\setminus E$ we may assume $V(f)= U\cap D_\mu$. Note
that $U$ may not be affine anymore, but this is not important from now on.

Taking sections of Diagram~\eqref{Eqn: tilde iota exists} over
$\iota^{-1}(U)$ shows that $f\circ\iota= \iota^\sharp(f)$ lies in
$\M_{(X,D)}\big(\iota^{-1}(U)\big)\subset \O_X
\big(\iota^{-1}(U)\big)$. By the definition of $\shM_{(X,D)}$ this
implies $V(f\circ\iota)\subset D$. But also
\[
V(f\circ\iota)= \iota^{-1}\big(V(f)\big)
= \iota^{-1}(U\cap D_\mu) =\iota(U\cap D_\mu).
\]
Taken together this shows that $\iota(U\cap D_\mu)\subset D$.
Since $U$ is open with $U\cap D_\mu\neq\emptyset$ we obtain the
desired inclusion $\iota(D_\mu)\subset D$.
\end{proof}
\begin{proposition}\label{Prop: strict morphism}
Let $f:(X,\M_X)\to (Y,\M_Y)$ be strict and assume that the morphism
$\ul f$ of the underlying schemes is compatible with real structures
$\iota_X$ on $X$ and $\iota_Y$ on $Y$. Then for any real structure
$\iota^\flat_Y$ on $\M_Y$ lifting $\iota_Y$ there exists a unique
real structure $\iota^\flat_X$ on $\M_X$ lifting $\iota_X$ and
compatible with $f$.
\end{proposition}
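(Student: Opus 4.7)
The strategy is to exploit strictness directly: since $f$ is strict, the natural map $f^*\M_Y \to \M_X$ is an isomorphism, so $\M_X$ is the pushout $f^{-1}\M_Y \oplus_{f^{-1}\O_Y^\times} \O_X^\times$ in the category of sheaves of monoids on $X$. Any candidate $\iota_X^\flat$ making $f$ into a real morphism in the sense of Definition~\ref{Def: Category of real log schemes} is therefore forced to coincide with the unique map induced on the pushout by the pair $(f^{-1}\iota_Y^\flat,\ \iota_X^\sharp|_{\O_X^\times})$, provided these two agree on the common subsheaf $f^{-1}\iota_Y^{-1}\O_Y^\times$.

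First I would verify this agreement. For any log scheme, the structure map $\alpha$ identifies $\O^\times$ with $\alpha^{-1}(\O^\times)\subset\M$, and hence any lift $\iota_Y^\flat$ of $\iota_Y$ is automatically equal to $\iota_Y^\sharp$ when restricted to $\O_Y^\times\subset\M_Y$. Pulling this identity back along $f$ and invoking $\iota_Y\circ f=f\circ\iota_X$ gives exactly the compatibility needed on $f^{-1}\iota_Y^{-1}\O_Y^\times$. The universal property of the pushout then produces a unique morphism $\iota_X^\flat:\iota_X^{-1}\M_X\to\M_X$. That $\iota_X^\flat$ intertwines $\alpha_X$ with $\iota_X^\sharp$, that it squares to the identity, and that $f$ becomes a real morphism, each follow by a direct diagram chase from the analogous properties of $\iota_Y^\flat$ on the $f^{-1}\M_Y$ factor and of $\iota_X^\sharp$ on the $\O_X^\times$ factor.

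Uniqueness is then immediate: any real lift must restrict to $\iota_X^\sharp$ on $\O_X^\times\subset\M_X$ as noted above, while realness of $f$ determines its behavior on the image of $f^{-1}\M_Y\to\M_X$; since these two subsheaves generate $\M_X$ through the pushout presentation, $\iota_X^\flat$ is pinned down completely. The main obstacle I anticipate is purely bookkeeping, namely keeping straight the identification $\iota_X^{-1}f^{-1}=f^{-1}\iota_Y^{-1}$ when transporting $\iota_Y^\flat$ across the pullback; once this is set up carefully the rest is formal, and no hypothesis beyond strictness of $f$ and the standard universal property is needed.
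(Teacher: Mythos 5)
Your proposal is correct and follows essentially the same route as the paper: both use strictness to present $\M_X$ as the fibered sum $f^{-1}\M_Y\oplus_{f^{-1}\O_Y^\times}\O_X^\times$, check that $f^{-1}\iota_Y^\flat$ and $\iota_X^\sharp$ agree on the overlap $f^{-1}\iota_Y^{-1}\O_Y^\times$ (using that any lift of a real structure restricts to $\iota^\sharp$ on units and that $\ul f$ is real), and then invoke the universal property of the pushout for both existence and uniqueness. The "bookkeeping" you flag, namely rewriting $\iota_X^{-1}\M_X$ as $f^{-1}\iota_Y^{-1}\M_Y\oplus_{f^{-1}\iota_Y^{-1}\O_Y^\times}\iota_X^{-1}\O_X^\times$, is exactly the one computation the paper records explicitly.
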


\proof
By strictness we can assume the log structure $\shM_X$ on $X$ is the
pull-back log structure $f^*\shM_Y = f^{-1}\shM_Y
\oplus_{f^{-1}\O_Y^\times} \O_X^\times$. Hence,
\[
\iota^{-1}_X\shM_X =
\iota^{-1}_Xf^{-1}\shM_Y \oplus_{{\iota_X^{-1}}f^{-1}\O_Y^\times}
\iota^{-1}_X \O_X^\times
=f^{-1}\iota^{-1}_Y\shM_Y \oplus_{f^{-1}{\iota_Y^{-1}}\O_Y^\times}
\iota^{-1}_X \O_X^\times.
\]
Now for a lift $\iota_X^\flat: \iota_X^{-1}\M_X\to \M_X$ of
$\iota_X^\sharp$ compatible with $f$, the composition
\[
\varphi:f^{-1}\iota^{-1}_Y\shM_Y \stackrel{\iota_X^{-1}f^\flat}{\lra} \iota_X^{-1}\M_X
\stackrel{\iota_X^\flat}{\lra} \M_X=f^*\M_Y
\]
factors over $f^{-1}\iota_Y^\flat: f^{-1}\iota^{-1}_Y\shM_Y \to
f^{-1}\M_Y$ and is hence determined by $f$ and $\iota_Y^\flat$.
Similarly, the composition
\[
\psi:\iota_X^{-1}\O_X^\times \lra \iota_X^{-1}\M_X
\stackrel{\iota_X^\flat}{\lra} \M_X=f^*\M_Y
\]
factors over $\iota_X^\sharp: \iota_X^{-1}\O_X^\times\to  \O_X^\times$
and thus is known by assumption. Since $\ul f$ is a real morphism of
real schemes, $\varphi$ and $\psi$ agree on
$f^{-1}\iota_Y^{-1}\O_Y^\times$. Hence the unique existence of
$\iota_X^\flat$ with the requested properties follows from the
universal property of the fibered sum.
\qed
\medskip

Explicit computations are most easily done in charts adapted to the real
structure. For simplicity we provide the following statements for log structures
in the Zariski topology, the case sufficient for our main application to toric
degenerations. Analogous statements hold in the \'etale or analytic topology. 

\begin{definition}
\label{Def: Real charts}
Let $(X,\M_X)$ be a log scheme with a real structure
$(\iota_X,\iota_X^\flat)$. A  chart $\beta: P\to \Gamma(U,\M_X)$ for
$(X,\M_X)$ is called a \emph{real chart} if (1)~$\iota_X(U)=U$ and
(2)~there exists an involution $\iota_P:P\to P$ such that for all
$p\in P$ it holds $\beta\big(\iota_P(p)\big) =
\iota_X^\flat\big(\beta(p)\big)$.
\end{definition}

\begin{example}
An involution $\iota_P$ of a toric monoid $P$ induces an
antiholomorphic involution on $\CC[P]$ by mapping $\sum_p a_p z^p$
to $\sum_p \ol{a_p} z^{\iota_P(p)}$. The induced real structure on
the toric variety $X_P=\Spec\CC[P]$ permutes the irreducible
components of the toric divisor $D_P\subset X_P$ and hence, by
Proposition~\ref{Prop: divisorial log structure} induces a real
structure on $(X_P,\M_{(X_P,D_P)})$. We claim the canonical
toric chart
\[
\beta: P\lra \Gamma(X_P,\M_{(X_P,D_P)}),\quad
p\longmapsto z^p
\]
is a real chart. Indeed, for any $p\in P$ we have $\beta(\iota_P(p))
= z^{\iota(p)} = \iota_{X_P}^\sharp(z^p)= \iota_{X_P}^\flat(z^p)$,
the last equality due to Proposition~\ref{Prop: divisorial log
structure}. 
\end{example}

Real charts may not exist, a necessary condition being that $X$ has
a cover by affine open sets that are invariant under the real
involution $\iota_X$. This is the only obstruction:

\begin{lemma}
\label{Lem: Existence of real charts}
Let $(X,\M_X)$ be a real log scheme with involution $\iota_X$. Let
$U\subset X$ be a $\iota_X$-invariant open set supporting a chart
$\beta: P\to \Gamma(U,\M_X)$. Then there also exists a real chart
$\beta': P'\to \Gamma(U,\M_X)$ for $\M_X$ on $U$.
\end{lemma}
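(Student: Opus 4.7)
The plan is to symmetrize the given chart by doubling the monoid. Using the real structure, the composition
\[
\tau := \iota_X^\flat \circ \iota_X^{-1}: \Gamma(U, \M_X) \lra \Gamma(U, \M_X)
\]
is a well-defined involution, making use of $\iota_X(U) = U$, with $\tau^2 = \id$ following from the involution property of $(\iota_X, \iota_X^\flat)$. I would first set $\gamma := \tau \circ \beta: P \to \Gamma(U, \M_X)$ and observe that $\gamma$ is itself a chart of $\M_X$ on $U$: it is obtained from the chart $\beta$ by first pulling back along the isomorphism $\iota_X$ (yielding a chart of $\iota_X^{-1}\M_X$ on $\iota_X^{-1}(U) = U$) and then applying the log structure isomorphism $\iota_X^\flat$.

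Next I would take $P' := P \oplus P$ equipped with the swap involution $\iota_{P'}(p,q) = (q,p)$, and define
\[
\beta'(p,q) := \beta(p) \cdot \gamma(q),
\]
with multiplicative notation on $\M_X$. Condition~(1) of Definition~\ref{Def: Real charts} is the hypothesis $\iota_X(U) = U$. Condition~(2) is a direct computation using $\tau^2 = \id$ and the fact that $\tau$ is a monoid homomorphism:
\[
\tau(\beta'(p,q)) = \tau(\beta(p)) \cdot \tau(\gamma(q)) = \gamma(p) \cdot \beta(q) = \beta'(q, p) = \beta'(\iota_{P'}(p, q)).
\]

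The main obstacle is to verify that $\beta'$ is indeed a chart of $\M_X$ on $U$, i.e., that the induced map $(P')^a \to \M_X|_U$ on associated log structures is an isomorphism. Surjectivity is immediate: the restriction of $\beta'$ to the first summand $P \hookrightarrow P'$, $p \mapsto (p, 0)$, recovers the chart $\beta$, which already surjects onto $\ol\M_X|_U$ modulo units. For the exactness condition $(\beta')^{-1}(\O_X^\times) = (P')^\times$, I would work at geometric stalks using sharpness of $\ol\M_{X, \bar x}$ (standard for the fine log structures of interest here): if $\beta(p)\gamma(q)$ is a unit, then $\bar\beta(p) + \bar\gamma(q) = 0$ in the sharp monoid $\ol\M_{X,\bar x}$, forcing $\bar\beta(p) = \bar\gamma(q) = 0$, so both $\beta(p)$ and $\gamma(q)$ are already units, and the exactness of the individual charts $\beta$ and $\gamma$ gives $p, q \in P^\times$. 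Translating this stalkwise argument into the desired sheaf-theoretic chart property is the step that I expect to require the most care.
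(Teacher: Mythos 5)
Your construction coincides with the paper's: take $P'=P\oplus P$ with the swap involution and $\beta'(p,q)=\beta(p)\cdot\iota_X^\flat(\beta(q))$ --- your $\tau$ is exactly the paper's way of applying $\iota_X^\flat$ to a global section over the invariant set $U$ --- and the verification of condition~(2) of Definition~\ref{Def: Real charts} is the same two-line computation. The only point of divergence is the step you yourself flagged, namely that $\beta'$ is still a chart. The paper dispatches this with the single remark that $\beta'$ restricts to the chart $\beta$ on the first summand; that observation yields surjectivity of the induced morphism $(P')^a\to\M_X|_U$ of associated log structures and nothing more.

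Your proposed verification of the chart property does not close this gap. First, the identity $(\beta')^{-1}(\O_X^\times)=(P')^\times$ is not something charts satisfy in general: for the toric chart $\NN\to\Gamma(\AA^1,\M_{(\AA^1,\{0\})})$, $n\mapsto z^n$, one has $\beta^{-1}(\O^\times_{X,x})=\NN$ at every $x\neq 0$, so the final appeal to ``exactness of the individual charts $\beta$ and $\gamma$'' is unavailable (nor is exactness part of being a chart). Second, and more seriously, surjectivity together with that identity is not sufficient: being a chart means the induced map $(P')^a\to\M_X$ is an isomorphism, equivalently that $P'/(\beta')^{-1}(\O^\times_{X,\bar x})\to\ol\M_{X,\bar x}$ is bijective at every geometric point, and the missing ingredient is injectivity. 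This genuinely fails for the doubled homomorphism: for $X=\AA^1$ with the divisorial log structure at the origin and the standard real structure one has $\iota^\sharp(z)=z$, hence $\beta'(a,b)=z^{a+b}$; at $\bar x=0$ both of your conditions hold, yet $(P')^a_0=\NN^2\oplus\O^\times_{X,0}\to\M_{X,0}\simeq\NN\oplus\O^\times_{X,0}$ identifies $((1,0),1)$ with $((0,1),1)$ and so is not injective. Some further device (for instance replacing $P'$ by its image in $\Gamma(U,\ol\M_X)$, or otherwise eliminating the redundant generators) is needed to produce an honest chart. To be fair, the paper's own proof is equally silent on this point, so you have correctly isolated the one nontrivial claim; but the argument you sketch does not establish it.
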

\begin{proof}
Let
\[
\tilde\beta: P\oplus P\lra \Gamma(U,\M_X),\quad
\tilde\beta(p,p')= \beta(p)\cdot \iota_X^\flat\big(\beta(p')\big) \,,
\]
and $\tilde{\beta}^\gp: (P \oplus P)^\gp \rightarrow \Gamma(U,\M_X^\gp)$ the induced map at the group level. Let 
$P' \subset (P \oplus P)^\gp$ be the submonoid of $(p,p') \in (P \oplus P)^{\gp}$
such that $\tilde{\beta}^\gp((p,p')) \in \Gamma(U,\M_X)$, and 
$\beta' \colon P' \rightarrow \Gamma(U,\M_X)$
the map given by the restriction of $\tilde{\beta}^{\gp}$
to $P'$. Since $\tilde\beta$ restricts to $\beta$ on
the first summand of $ P \oplus P$, the map $\tilde{\beta}^\gp$
induces a surjective map 
$(P \oplus P)^\gp \rightarrow \Gamma(U, \M_X^\gp /\O_X^{\times})$, and so $\beta'$ is a chart for 
$\M_X$ on $U$ by Lemma 2.10 of \cite{Kato}. 
We claim that $\beta'$ is a real chart.
As $\iota_X^{\flat}$ preserves 
$\M_X \subset \M_X^\gp$, and 
$\tilde{\beta}^\gp((p',p))=\beta(p') \cdot \iota_X^\flat(\beta(p))
=\iota_X^\flat(\beta(p) \cdot \iota_X^\flat(\beta(p'))=\iota_X^\flat (\tilde{\beta}^\gp(p,p'))$, the involution $(p,p') \mapsto (p',p)$ on $(P \oplus P)^{\gp}$
preserves $P'$ and so defines an involution 
$\iota_{P'} \colon P' \rightarrow P'$.
By construction, for every $(p,p')\in
P'$ we have $\beta'(\iota_{P'}(p,p'))
=\beta'((p',p))=\iota_X(\beta'(p,p'))$,
%\tilde\beta\big(\iota_{P'}(p,p')\big) =\tilde\beta(p',p)=\beta(p')\cdot \iota_X^\flat(\beta(p))=\iota_X^\flat\big(\iota_X^\flat(\beta(p'))\cdot \beta(p)\big) =\iota_X^\flat\big(\tilde\beta(p,p')\big),\]
verifying the condition for a real chart.
\end{proof}

Note that if $X$ is a separated scheme, real charts always exist at
any point $x$ in the fixed locus of $\iota_X$. In fact, take any
chart defined in a neighbourhood $U$ of $X$, restrict to $U\cap
\iota_X(U)$, still an affine open set by separatedness, and apply
Lemma~\ref{Lem: Existence of real charts}.
\medskip

\begin{proposition}\label{Prop: compatibility with base change}
Cartesian products exist in the category of real log schemes.
\end{proposition}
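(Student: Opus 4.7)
The plan is to produce the cartesian product by taking the cartesian product in the category of log schemes over $\CC$ and then using its universal property to construct a canonical real structure lifting the usual one on the underlying scheme. Thus given real log schemes $(X,\M_X)$, $(Y,\M_Y)$, $(Z,\M_Z)$ and real morphisms $f:(X,\M_X)\to(Z,\M_Z)$, $g:(Y,\M_Y)\to(Z,\M_Z)$, I would first take the fiber product $(W,\M_W)=(X,\M_X)\times_{(Z,\M_Z)}(Y,\M_Y)$ in the category of log schemes over $\CC$, which exists by general results on fs (or coherent) log schemes. Denote the projections by $\pi_X,\pi_Y$.

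Next I would build the real structure on $(W,\M_W)$. Since $f,g$ are real, the morphisms $\iota_X\circ\pi_X:W\to X$ and $\iota_Y\circ\pi_Y:W\to Y$ of log schemes satisfy
\[
f\circ(\iota_X\circ\pi_X)=\iota_Z\circ f\circ\pi_X=\iota_Z\circ g\circ\pi_Y=g\circ(\iota_Y\circ\pi_Y),
\]
and the analogous identity holds on the log structures thanks to Definition~\ref{Def: Category of real log schemes}. Hence the universal property of $(W,\M_W)$ supplies a unique morphism $\tilde\iota_W=(\iota_W,\iota_W^\flat):(W,\M_W)\to(W,\M_W)$ with $\pi_X\circ\tilde\iota_W=\tilde\iota_X\circ\pi_X$ and $\pi_Y\circ\tilde\iota_W=\tilde\iota_Y\circ\pi_Y$. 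Applying the uniqueness clause to both $\tilde\iota_W^2$ and $\id_W$ — both of which trivially commute with the projections after using $\iota_X^2=\id$, $\iota_Y^2=\id$ on the nose and on log structures — gives $\tilde\iota_W^2=\id$. For compatibility with $\mathrm{conj}$ on $\Spec\CC$, denote the structure maps by $c_X,c_Y,c_Z,c_W$. Then
\[
c_W\circ\iota_W=c_X\circ\pi_X\circ\iota_W=c_X\circ\iota_X\circ\pi_X=\mathrm{conj}\circ c_X\circ\pi_X=\mathrm{conj}\circ c_W,
\]
so $(W,\M_W)$ with $\tilde\iota_W$ is a real log scheme.

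Finally I would check the universal property in the category of real log schemes. Given a real log scheme $(T,\M_T)$ and real morphisms $h_X:(T,\M_T)\to(X,\M_X)$, $h_Y:(T,\M_Y)\to(Y,\M_Y)$ with $f\circ h_X=g\circ h_Y$, the universal property in log schemes yields a unique $h:(T,\M_T)\to(W,\M_W)$. To see that $h$ is real, observe that both $h\circ\tilde\iota_T$ and $\tilde\iota_W\circ h$ fit into the universal property for the pair $(\iota_X\circ h_X,\iota_Y\circ h_Y)=(h_X\circ\iota_T,h_Y\circ\iota_T)$, the latter equality using that $h_X$ and $h_Y$ are real. By uniqueness, $h\circ\tilde\iota_T=\tilde\iota_W\circ h$, so $h$ is real, and it is evidently the only real morphism inducing $h_X,h_Y$.

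The substantive point is the construction of the involution $\tilde\iota_W$ together with its log lift; everything else is a formal application of the universal property. The main thing to keep straight is that the compatibility condition in Definition~\ref{Def: Category of real log schemes} for $f$ and $g$ is exactly what allows the two candidate morphisms $\tilde\iota_X\circ\pi_X$ and $\tilde\iota_Y\circ\pi_Y$ to be equalized after composition with $f$ and $g$ both at the scheme level and at the level of log structures, so that the universal property in log schemes applies without modification.
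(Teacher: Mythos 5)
Your proof is correct and follows essentially the same route as the paper: both construct the real structure on the log-scheme fibre product by invoking universal properties (the paper phrases it via the fibered coproduct of the pulled-back log structures, you via the universal property of the fibre product itself, which amounts to the same thing). You are somewhat more careful than the paper in spelling out the verification that the induced map is an involution compatible with conjugation and that the resulting object satisfies the universal property in the real category.
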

\begin{proof}
Let $(X,\M_X)$, $(S,\M_S)$, $(T,\shM_T) $ be real log schemes
endowed with morphisms $f:(X,\shM_X) \to (T,\shM_T)$ and $g:
(S,\shM_S) \to (T,\shM_T)$. Then the fibre product in the category
of log schemes $( S \times_T X, \M_{S\times_T X})$
fits into the following cartesian diagram.
\begin{eqnarray}
    \xymatrix{
        ( S\times_T  X,\shM_{ S\times_T  X}) \ar[r]^-{p_X}
		\ar[d]^{p_S} &
		(X,\shM_X) \ar[d]^{f} \\
        (S,\shM_S) \ar[r]^{g}    & (T,\shM_T) }
\end{eqnarray}
The log structure on the fibre product  $S\times_T  X$ is given by
$\shM_{ S\times_T  X}=p^*_X\shM_X \oplus_{p^*_T\shM_T }
p^*_S\shM_S$. By the universal property of the fibered coproduct
the existence of real structures on $(X,\shM_X), (S,\shM_S)$ and
$(T,\shM_T) $ ensures the existence of a real structure on $(  S
\times_{T} X,\shM_{ S\times_T  X})$.
\end{proof}

Note that in general the fibred coproduct of fine log structures
$p^*_X\shM_X \oplus_{p^*_T\shM_T } p^*_Y\shM_Y$ is only coherent \cite[\S II, Prop.~2.1.1]{Ogus}, but not even
integral. To take the fibred product in the category of fine log schemes
requires the further step of integralizing $( S\times_T X,\shM_{ S\times_T X})$.
Given a monoid $P$ with integralization $P_{\mathrm{int}}$ and a chart $U\to
\Spec\ZZ[P]$ for a log scheme $(U,\M_U)$, the integralization of $(U,\M_U)$ is
the closed subscheme $U\times_{\Spec\ZZ[P]} \Spec\ZZ[P^{\mathrm{int}}]$ of $U$
with the log structure defined by the chart $U\to\Spec\ZZ[P]\to \Spec
\ZZ[P^{\mathrm{int}}]$. So integralization may change the underlying scheme. A similar
additional step is needed for staying in the category of saturated log schemes.
Fortunately, we are only interested in the case that $g$ is strict, and in this
case the fibre products in all categories agree. See \cite{Ogus}, Ch.III,
\S2.2.1, for details.

\begin{example}\label{Expl: relation to degenerations}
Let $S$ be the spectrum of a discrete valuation ring with residue field $\CC$
and $\pi:\shX \to S$ be a flat morphism. Let $0 \in S$ be the closed point,
$X_0= \pi^{-1}(0)$ and consider $\delta$ as a morphism of log schemes with
divisorial log structures $\pi: (\shX,\M_{(\shX, X_0)})\to (S,\M_{(S,0)})$. If
$\pi$ commutes with real structures on $\shX$ and $S$, then by Proposition
\ref{Prop: divisorial log structure}, the morphism $\pi$ is naturally a real
morphism of real log schemes. Taking the base change by the strict morphism
$(\Spec \CC, \NN\oplus\CC^\times) \to (S,\M_{(S,0)})$, Proposition \ref{Prop:
compatibility with base change} leads to a real log scheme $(X_0, \shM_{X_0})$
over the standard log point $O^\ls= (\Spec \CC, \NN\oplus\CC^\times)$
with its obvious real structure.
\end{example}

%===========================================================
\section{Lifting real involutions to the Kato--Nakayama space}
\label{Sect: Real structure on KN}
In this section we study the additional structure on the Kato-Nakayama space of a log space
induced by a real structure.
%Throughout this section we identify
%$\ZZ/2\ZZ$ with the multiplicative group with two elements $\{ \pm 1 \}$.
\begin{definition}
\label{Def: Lifted real involution}
Let $(X,\M_X)$ be a real log space with
\[
\iota_X=(\ul\iota_X,\iota^\flat_X):(X,\M_X)\to (X,\M_X)
\]
denoting its real involution. Let $\Pi^\ls$ be the polar log point with real
structure $\iota_\Pi$ given by the involution
\[
\iota^\flat_\Pi(r,e^{i\varphi})= (r,e^{-i\varphi})
\]
lifting the standard conjugation involution on $\CC$. We call the
map%\Bernd{Corrected --- we have to use $\iota_X, \iota_\Pi$ here rather than
%$\iota_X^\flat,\iota_\Pi^\flat$!}
\begin{equation}
\label{Eqn: KN involution}
\iota_X^\KN: X^\KN\lra X^\KN,\quad
\big( f:\Pi^\ls\to (X,\M_X)\big) \lra \iota_X\circ f\circ
\iota_\Pi
\end{equation}
the \emph{lifted real involution} on the Kato-Nakayama space.
\end{definition}

\begin{proposition}
\label{Prop: The real involution lifts to X^KN}
Let $(X,\M_X)$ be a real log space.
\begin{itemize}
\item[(1)] The lifted real involution $\iota_X^\KN$ is continuous
and is compatible with the underlying real involution
$\iota_X$ of $X$ under the projection $\pi: X^\KN\to X$.
\item[(2)] The
real locus of $X$ has a canonical lift to $X^\KN$.
\end{itemize}
\end{proposition}
\begin{proof}
The first part is immediate from the definition \ref{Def: Lifted real involution}, and the second part is a direct consequence of the first part. 
\end{proof}
\begin{definition}
\label{Def: Real locus in X^KN}
Let $(X,\M_X)$ be a real log space and $\iota_X^\KN: X^\KN\to
X^\KN$ the lifted real involution. We call the fixed point set of
$\iota_X^\KN$ the \emph{real locus of $X^\KN$}, denoted
$X^\KN_\RR\subset X^\KN$.
\end{definition}

Studying the real locus $X^\KN_\RR\subset X^\KN$ allows us to understand real structures in toric degenerations, since under the presence of a real structure, Theorem~\ref{Thm: KN for X0 in simple case} generalizes to the following statement for the pair $\big(X_0^\KN, X_{0,\RR}^\KN\big)$.
\begin{proposition}
\label{Prop: pairs}
Let $(X_0,\shM_{X_0})$ be a real toric log Calabi--Yau, with a log smooth morphism $\delta:(X_0,\shM_{X_0}) \to \O^\ls$. Then the fibre bundle
structure for $\delta^\KN: X_0^\KN\to S^1$ can be chosen to respect the real
locus $X_{0,\RR}^\KN\subset X_0^\KN$ in the sense that there exists local trivializations in which the real involution becomes the pullback of the standard real involution on the base. The analogous statement holds for $\shX^\KN\to D^\KN$ assuming
there is a real structure on $(\shX,\shM_\shX)$ extending the real structure on
$(X_0,\shM_{X_0})$. In particular, we obtain a fibre bundle structure on $\shX^\KN_\RR \to D^\KN_\RR$. 
\end{proposition}

\begin{proof}
We have to verify that under the presence of a real structure, the proof of
\cite{NO}, Theorem~5.1, extends to the required stronger statement for pairs.
The proof first establishes a result for a toric model (\cite{NO}, Theorem~0.2)
and then globalizes by a result by Siebenmann (\cite{Siebenmann},
Corollary~6.14). Now the proof for toric models indeed works on the positive
real locus and leaves the phase (argument) untouched, thus readily gives the
required local trivialization respecting the real locus. For the globalization,
there does exist a stratified version of \cite{Siebenmann}, Corollary~6.14, the
so-called ``respectful version'' (\cite{Siebenmann}, First Complement~6.10 and
Remark~6.16). Applying this result to the pairs $X_{0,\RR}^\KN \subset X_0^\KN$
and $\shX_\RR^\KN \subset \shX^\KN$, respectively, then yields the assertions.
\end{proof}
\bigskip

In the remaining part of this section we study the real locus $X^\KN_\RR\subset X^\KN$. Recall from the description~\eqref{Eqn: (x,theta) description of KN space}, there is a canonical projection map $\pi: X^\KN\to X$. Let 
\[\pi_\RR: X^\KN_\RR\to X_\RR \]
denote its restriction to the real locus.

If $x\in X_\RR$ then the involution $\iota_X^\flat$ induces an involution on the stalk $\M_{X,x}$ as well as on the stalk of the ghost sheaf $\ol \M_{X,x}= \M_{X,x}/\O^\times_{X,x}$. We describe $\pi_\RR^{-1}(x)$ in the next proposition.

\begin{proposition}
\label{Prop: fibres of pi_RR}
Let $(X,\M_X)$ be a real log space and $x\in X_\RR$. Consider the Kato--Nakayama space $X^{\KN}$ described as in ~\eqref{Eqn: (x,theta) description of KN space} and let $\pi: X^\KN\to X$ be the projection map. Then,
\begin{enumerate}
\item 
A point $(x,\theta)$ of the fibre $\pi^{-1}(x)$,
where $\theta\in \Hom(\M_{X,x}^\gp,U(1))$ is as in \eqref{Eqn: (x,theta) description of KN space}, lies
in $X_\RR^\KN$ if and only if $\theta\circ\iota_{X,x}^\flat
=\ol\theta$, where $\ol\theta$ is the complex conjugate of $\theta$.
\item
If $\iota_X^\flat$ induces a trivial action on $\ol\M_{X,x}$, then
$\pi_\RR^{-1}(x)$ is canonically a torsor for the group
$\Hom(\ol\M_{X,x}^\gp, \ZZ/2\ZZ)$.
\end{enumerate}
\end{proposition}

\begin{proof}
(1)\ \ Let $\tilde x\in \pi^{-1}(x)$ be given by a log morphism $f:\Pi^\ls\to
(X,\M_X)$ with image $x$. By \eqref{Eqn: KN involution} we have $\tilde x\in
X_\RR^\KN$ if and only if 
\[ f\circ\iota_\Pi=\iota_X\circ f \]
Writing $\tilde
x=(x,\theta)$ as in \eqref{Eqn: (x,theta) description of KN space},
we have%\Bernd{In my opinion, displaying too many equations harms readability by
%spreading out the text on too many pages. It also increases the length of the
%paper, making it harder to get accepted. Copy editors regularly get rid of such
%unnecessary bloat to save journal space. You turned a three-line triviality into
%a proof taking up a quarter of a page!} 
\begin{eqnarray}
\label{compare}
\nonumber
f\circ\iota_\Pi & = & (x,\ol\theta) \\
\nonumber
\iota_X\circ f & = &
(x,\theta\circ\iota_{X,x}^\flat)
\nonumber
\end{eqnarray}
The statement follows by comparison of the above two equations.
\smallskip

\noindent
(2)\ ~Denote by $\kappa: \M_X\to \ol\M_X$ the quotient homomorphism. We
define the action of $\tau\in \Hom(\ol\M_{X,x}^\gp,\ZZ/2\ZZ)$ on
$\pi^{-1}(x)$ by
\begin{equation}
\label{Eqn: Hom(ol M^gp,ZZ/2)-action}
\big(\tau\cdot \theta\big)(s)= \tau( \kappa_x(s))\cdot\theta(s)
\end{equation}
for $s\in\M_{X,x}$ and $\theta\in \Hom(\M_{X,x}^\gp,U(1))$. In this definition
we take $\tau(\kappa_x(s))\in \O^\times_{X,x}$ by means of the identification
$\ZZ/2\ZZ=\{\pm1\}$. By (1), $\theta$ defines a point in $X_\RR^\KN$ iff
$\theta\circ\iota_{X,x}^\flat =\ol\theta$. If $\iota_X^\flat$ induces a trivial
action on $\ol\M_{X,x}$, hence on $\ol\M_{X,x}^\gp$, then this condition
is preserved by the action of $\Hom(\ol\M_{X,x}^\gp,\ZZ/2\ZZ)$. Indeed, by the triviality of the induced action on $\ol\M_{X,x}^\gp$ and since $\tau$ takes real values, it holds
\[
\tau\circ\kappa_x\circ\iota^\flat_{X,x}= \tau\circ \kappa_x = \ol{\tau\circ\kappa_x}.
\]
Thus $\tau\cdot\theta$ fulfills the condition stated in (1):
\[
(\tau\cdot\theta)\circ\iota_{X,x}^\flat =
\big((\tau\circ\kappa_x)\cdot \theta\big)\circ \iota_{X,x}^\flat=
\ol{(\tau\circ\kappa_x)}\cdot \ol\theta = \ol{\tau\cdot\theta}.
\]
Hence \eqref{Eqn: Hom(ol M^gp,ZZ/2)-action} defines an action of
$\Hom(\ol\M_{X,x}^\gp,\ZZ/2\ZZ)$ on $\pi_\RR^{-1}(x)$. To check the claimed
torsor property, let $\theta_1,\theta_2\in \Hom(\M_{X,x}^\gp,U(1))$ define
elements in $\pi_\RR^{-1}(x)\subset X_\RR^\KN$. Then by \eqref{Eqn: (x,theta)
description of KN space} we have for $h\in \O^\times_{X,x}$,
\[
\theta_1(h)=\frac{h}{|h|}=\theta_2(h).
\]
Hence $\theta_1\cdot\theta_2^{-1}$ takes trivial values on $\O^\times_{X,x}$.
Denote by $\tilde\tau:\ol\M_{X,x}^\gp\to U(1)$ the induced map with
$\tilde\tau\circ\kappa_x=\theta_1\cdot\theta_2^{-1}$. Note that $\tilde\tau$ takes real
values, hence defines a map $\tau:\ol\M_{X,x}^\gp\to \ZZ/2\ZZ$:
\[
\ol{\theta_1\cdot\theta_2^{-1}}= \ol\theta_1\cdot\ol\theta_2^{-1}=
(\theta_1\circ\iota_{X,x}^\flat)\cdot (\theta_2\circ\iota_{X,x}^\flat)^{-1}=
(\theta_1\cdot\theta_2^{-1})\circ\iota_{X,x}^\flat=
(\theta_1\cdot\theta_2^{-1}).
\]
The last equality follows since $\theta_1\cdot\theta_2^{-1}$ factors over
$\ol\M_{X,x}^\gp$, on which $\iota_{X,x}^\flat$ induces a trivial action. 

Now $\tau\in\Hom(\ol\M_{X,x}^\gp,\ZZ/2\ZZ)$ fulfills
$\theta_1=\tau\cdot\theta_2$ by definition, showing that the action is simply
transitive.
\end{proof}

\begin{corollary}
Let $(X,\M_X)$ be a fine saturated real log space. If the stalk of $\ol\M_X^\gp$ at $x\in X_\RR$ has rank~$r$,
and $\iota_x^\flat$ induces a trivial action on $\ol\M_{X,x}$, then
$\pi^{-1}(x)$ consists of $2^r$ points. 
\end{corollary}
\begin{proof}
This is an immediate consequence of Proposition~\ref{Prop: fibres of
pi_RR}.
\end{proof}

Without the assumption of a trivial action on the ghost sheaf
$\ol\M_{X,x}$, the fibre of $X_\RR^\KN\to X_\RR$ can be
non-discrete, as we will see in the following example.

\begin{example}
Let $X$ be a complex variety with a real structure $\ul\iota_X$ and a
$\ul\iota_X$-invariant simple normal crossings divisor $D$ with two irreducible
components $D_1$, $D_2$. Assume there is a real point $x\in D_1\cap D_2$ and
$\ul\iota_X$ exchanges the two branches of $D$ at $x$. Denote by $\iota_X^\flat$
the induced real structure on $\M_X=\M_{(X,D)}$ according to
Proposition~\ref{Prop: divisorial log structure}. Then $P=\ol\M_{X,x}=\NN^2$ and
$\ol\iota_{X,x}^\flat(a,b)= (b,a)$. The action extends to an
involution $\iota_M$ of $M=P^\gp=\ZZ^2$. In the present case there is a subspace
$M'\subset M$ with $M'\oplus\iota_M(M')= M$, e.g.\ $M'=\ZZ\cdot(1,0)$. Then
$\theta: M\to U(1)$ can be prescribed arbitrarily on $M'$ and extended uniquely
to $M$ by enforcing $\theta\circ\iota_M= \ol\theta$. Thus in the present case
$\pi_\RR^{-1} (x)= \Hom(\ZZ,U(1))= S^1$.

In the general case, when $\ol\M_{X,x}$ is a fine monoid and $\ol\M_{X,x}^{gp}$ is torsion-free, by \cite[II, Prop.~2.3.7]{Ogus} we can write
$\M_{X,x}^\gp= M\oplus \O_{X,x}^\times$ with $M$ a finitely generated abelian
group and such that $\iota_{X,x}^\flat$ acts by an involution $\iota_M$ on $M$
and by $\iota_{X,x}^\sharp$ on $\O_{X,x}^\times$. Then $\pi^{-1}(x)=
\Hom(M,U(1))$ is a disjoint union of tori, one copy of $\Hom(M/T, U(1))$ for
each element of the torsion subgroup $T\subset M$. The fibres $\pi_\RR^{-1}(x)$
for $x\in X_\RR$ are the preimage of the diagonal torus of the map
\[
\Hom(M,U(1))\lra \Hom(M,U(1))\times\Hom(M,U(1)),\quad
\theta\longmapsto (\theta\circ\iota_M, \ol\theta).
\]
\end{example}

Although the rank of $\ol\M_X^\gp$ varies, in toric situations the projection map $X^{KN} \to X$ interacts nicely with the momentum map description on $X$, ensuring the smoothness of $X^\KN_\RR$. We describe $X^\KN_\RR$ in such situations in the following Proposition.

\begin{proposition}
\label{Prop: X_RR^KN of toric}
Let $(X,\M_X)$ be a toric variety with its toric log structure and $\mu:X\to
\Xi\subset M_\RR$ a momentum map. Let $\iota_X$ be the unique real structure on
$(X,\M_X)$ lifting the standard real structure according to
Proposition~\ref{Prop: divisorial log structure}. Then there is a canonical
homeomorphism
\[
X_\RR^\KN \simeq \Xi\times \Hom(M,\ZZ/2\ZZ),
\]
with the projection to $\Xi$ giving the composition $\mu\circ\pi_\RR:
X_\RR^\KN\to \Xi$.
\end{proposition}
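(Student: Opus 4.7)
The plan is to deduce the statement from the canonical homeomorphism
\[
\Phi:\Xi\times\Hom(M,U(1))\lra (X,\M_X)^\KN
\]
of Proposition~\ref{Prop: KN of toric variety} by computing how the lifted real involution $\iota_X^\KN$ acts through $\Phi$ and then reading off its fixed locus.

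\textbf{Step 1: Transport $\iota_X^\KN$ to the torus-bundle side.} I first check that under the description of $X^\KN$ by pairs $(x,\theta)$ from \eqref{Eqn: (x,theta) description of KN space}, the involution acts by $(x,\theta)\mapsto (\iota_X(x),\bar\theta)$, where $\bar\theta(s)=\overline{\theta(\iota_X^\flat(s))}$. This follows directly from Definition~\ref{Def: Lifted real involution}: unwinding $\iota_X^\flat\circ f\circ \iota^\flat_\Pi$ at the stalk level gives, for $s\in\M_{X,\iota_X(x)}^\gp$, the value $\overline{\theta(\iota_X^\flat(s))}\in U(1)$, since $\iota_\Pi^\flat$ is complex conjugation on the $U(1)$-factor.

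\textbf{Step 2: Identify the action in $\Xi\times\Hom(M,U(1))$.} By the standard real structure, $\iota_X$ fixes the positive real locus $X_{\ge0}$ pointwise; hence $\iota_X(s_0(a))=s_0(a)$ for every $a\in\Xi$. On a toric chart $\Specan\CC[\sigma^\vee\cap M]$ the unique lift $\iota_X^\flat$ from Proposition~\ref{Prop: divisorial log structure} fixes each monomial section $z^m$. Combining these with the explicit formula $\Phi(a,\lambda)= \big(\lambda\cdot s_0(a),\lambda\big)$ from the proof of Proposition~\ref{Prop: KN of toric variety} one obtains
\[
\iota_X^\KN\big(\Phi(a,\lambda)\big) =\Phi(a,\bar\lambda),
\]
where $\bar\lambda(m)=\overline{\lambda(m)}$; the computation uses that $\iota_X\big(\lambda\cdot s_0(a)\big)= \bar\lambda\cdot s_0(a)$, which holds because $\iota_X$ acts by complex conjugation on the algebraic torus $\Hom(M,\CC^\times)$ and fixes $s_0(a)$. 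This is the technical heart of the proof and the step I expect to be the main obstacle, since it requires carefully tracking conjugations through the composition of the log-chart description with the momentum-map section.

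\textbf{Step 3: Take fixed points and verify the projection.} Since the involution on the $\Xi$-factor is trivial and on the $\Hom(M,U(1))$-factor is $\lambda\mapsto\bar\lambda$, its fixed locus is
\[
\Xi\times\big\{\lambda\in\Hom(M,U(1))\mid \lambda=\bar\lambda\big\}
=\Xi\times\Hom(M,\{\pm1\})=\Xi\times\Hom(M,\ZZ/2\ZZ).
\]
This establishes the asserted homeomorphism. For the compatibility of projections, $\pi_\RR$ sends $(a,\lambda)$ to $\lambda\cdot s_0(a)$ and then $\mu$ recovers $a$, since the momentum map is $\Hom(M,U(1))$-invariant by Definition~\ref{Def: Momentum map}(1), so $\mu\circ\pi_\RR$ equals the projection onto $\Xi$ as required.
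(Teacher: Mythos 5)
Your proposal is correct and follows essentially the same route as the paper: both identify the lifted involution through the homeomorphism of Proposition~\ref{Prop: KN of toric variety} as $(a,\lambda)\mapsto(a,\ol\lambda)$ and read off the fixed locus as $\Xi\times\Hom(M,\{\pm1\})$. Your Steps~1 and~2 merely spell out the conjugation computation that the paper states without detail, and your verification of the projection compatibility is a harmless addition.
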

\begin{proof}
Recall the section $ s_0:\Xi\to X$ of the momentum map with image
$X_{\ge0}\subset X_\RR$ from Definition \ref{Def: Momentum map}. For $a\in\Xi$, Proposition~\ref{Prop: KN of
toric variety} identifies 
\[\pi^{-1}(\mu^{-1}(a))\subset X^\KN\] with pairs
$(\lambda\cdot s_0(a),\lambda)\in X\times \Hom(M,U(1))$. The
action of $\iota_X^\KN$ on this fibre is
\[
(\lambda\cdot s_0(a),\lambda)\longmapsto
(\ol\lambda\cdot s_0(a),\ol\lambda).
\]
Thus $(\lambda\cdot s_0(a),\lambda)$ gives a point in $X_\RR^\KN$
if and only if $\lambda=\ol\lambda$. This is the case iff
$\lambda$ takes values in $\RR\cap U(1)=\{\pm1\}$, giving the result.
\end{proof}

%===========================================================

\section{Real loci in toric degenerations}
\label{Subsect: X_0^KN real}

Recall that
any complex toric variety is defined over $\ZZ$ and hence has a canonical real
structure by factoring the base change $\Spec\CC\to\Spec\ZZ$ over $\Spec\RR$.
Similarly, the standard log point has a canonical real structure. Let $(X_0,\M_{X_0})$ be a toric log Calabi-Yau space \cite[Defn.~4.3]{logmirror1}. We call $(X_0,\M_{X_0})$ \emph{standard real} if it has a
real structure compatible with the canonical real structure on the
standard log point and inducing the canonical real structure on its
toric irreducible components. Since the morphism $\delta: (X_0,\M_{X_0})\to
O^\ls$ is strict at the generic points of the irreducible components of $X_0$,
and since any section of $\M_{X_0}$ that is supported on higher codimensional
strata is trivial (constant~$1$), there is at most one such real structure on
$(X_0,\M_{X_0})$. Standard real structures appear to be the only class of real structures on toric
log-Calabi-Yau spaces that exist in great generality. While other real
structures, for example those lifting an involution on $B$, should be extremely
interesting in more specific situations, we therefore restrict the following
discussion to standard real structures, except for the example of a
toric degeneration of local $\PP^2$ discussed in \S\ref{Subsect: local PP2}.

A toric log Calabi--Yau space $(X_0,\M_{X_0})$ with given intersection complex $(B,\P)$, in good cases can be uniquely defined from additional combinatorial data given by \emph{lifted open gluing data} by Theorem \ref{thm: lifted gluing}. Lifted open gluing data, which we reviewed in \S\ref{Sec: open gluing data}, is given by elements $s\in H^1 (B,\iota_*\check\Lambda
\otimes\CC^\times)$.
\begin{proposition}
\label{Prop: Gluing data and slab function for the standard real structure}
Let $(X_0,\M_{X_0})$ be a polarized toric log Calabi-Yau space with intersection complex
$(B,\P)$. Then there is a standard real structure on
$(X_0,\M_{X_0})$ if and only if 
there exist lifted open gluing data $s=(s_v)_{v\in \tau}$ with
$X_0\simeq X_0(B,\P,s)$ such that $s_v \in \check{\Lambda}_\sigma \otimes_\ZZ \mathbb{R}^*$.
\end{proposition}
\begin{proof}
The proof is by inspection of the arguments in \cite{logmirror1}. If $s=(s_{\omega\tau})_{\omega,\tau}$ are open gluing data
taking values in $\RR^\times\subset\CC^\times$, the construction of
$X_0(B,\P,s)$ by gluing affine toric varieties in \cite{logmirror1},
Definition~2.28 readily shows that the real structures on the
irreducible components induce a real structure on $X_0(B,\P,s)$.
 
Conversely, given $(X_0,\M_{X_0})$ with a standard real structure, Theorem~4.14
in \cite{logmirror1} constructs open gluing data $s$ and an isomorphism
$X_0\simeq X_0(B,\P,s)$. The construction has two steps. First, $X_0$ being
glued from toric varieties, there exist \emph{closed} gluing data $\ol s$
inducing this gluing. If $X_0$ admits a standard real structure, $\ol s$
automatically takes real values. In a second step one shows that the closed
gluing data are the image of open gluing data as in \cite{logmirror1},
Lemma~2.29 and Proposition~2.32,2. This step uses a chart for the log structure
at a zero-dimensional toric stratum $x\in X_0$. In view of the given real
structure on $(X_0,\M_{X_0})$, this chart can be taken real (Lemma~\ref{Lem:
Existence of real charts}). With this choice of chart, the construction of open
gluing data in the proof of \cite{logmirror1}, Theorem~4.14, indeed produces
real open gluing data.
\end{proof}
We refer to lifted open gluing data $s$ as in Proposition \ref{Prop: Gluing data and slab function for the standard real structure} as \emph{real lifted open gluing data}. It follows that, in terms of lifted gluing data, the existence of a
standard real structure has a simple cohomological formulation.
\begin{corollary}
\label{Cor: Standard real structure cohomological}
Assuming $(B,\P)$ positive and simple, then the toric log
Calabi-Yau space $(X_0,\M_{X_0})$ defined by lifted gluing data $s\in
H^1(B,\iota_* \check\Lambda\otimes\CC^\times)$ is standard real if
and only if $s$ lies in the image of
\[
H^1(B, \iota_*\check\Lambda\otimes\RR^\times) \lra 
H^1(B,\iota_* \check\Lambda\otimes\CC^\times).
\]
\end{corollary}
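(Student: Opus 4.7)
The plan is to reduce the cohomological statement to the concrete criterion of Proposition~\ref{Prop: Gluing data and slab function for the standard real structure} via the classification of toric log Calabi-Yau spaces by lifted gluing data (\cite{logmirror1}, Theorem~5.4). That classification parametrizes the isomorphism class of $(X_0,\M_{X_0})$ with fixed intersection complex $(B,\P,\varphi)$ by the \v Cech cohomology class $s \in H^1(B,\iota_*\check\Lambda\otimes\CC^\times)$ of a cocycle whose entries simultaneously encode both the open gluing data on an appropriate open cover of $B$ and the slab function coefficients along codimension one cells, via the pairing of $\check\Lambda_\rho$ with monomials on $X_\rho$. So the task reduces to showing that the availability of a \v Cech representative of $s$ taking values in $\iota_*\check\Lambda \otimes \RR^\times$ is equivalent to the existence of open gluing data with values in $\RR^\times$ together with slab functions in $\RR[\Lambda_\rho]$.

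For the ``if'' direction I would pick a cocycle representative $\tilde s$ of $s$ with values in $\iota_*\check\Lambda\otimes\RR^\times$. Unwinding the recipe of \cite{logmirror1}, Theorem~5.4, which turns a cocycle into gluing data and slab functions, this immediately produces real open gluing data $(s_{\omega\tau})$ and real slab function coefficients $(f_{\rho,v})$. Proposition~\ref{Prop: Gluing data and slab function for the standard real structure} then furnishes the required standard real structure on $(X_0,\M_{X_0})$.

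For the converse, assume $(X_0,\M_{X_0})$ is standard real. Proposition~\ref{Prop: Gluing data and slab function for the standard real structure} provides open gluing data and slab functions with values in $\RR^\times$ and $\RR$, respectively. The construction of the lifted gluing data cocycle in \cite{logmirror1}, Theorem~5.4, is functorial in these inputs and sends real data to real outputs, so we obtain a cocycle representative of $s$ taking values in $\iota_*\check\Lambda\otimes\RR^\times$; this exhibits $s$ in the image of the stated map.

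The main obstacle I anticipate is careful bookkeeping in the ``only if'' direction: the construction of lifted gluing data in \cite{logmirror1}, Theorem~5.4, makes auxiliary choices (notably charts at zero-dimensional toric strata and trivializations at vertices) that must be made real in order to guarantee that the resulting cocycle has real entries. This can be arranged via Lemma~\ref{Lem: Existence of real charts}, which provides real charts at any $\iota_{X_0}$-invariant affine open set, applied to neighbourhoods of the (necessarily real) zero-dimensional toric strata. A secondary, inessential subtlety is that the map $H^1(B,\iota_*\check\Lambda\otimes\RR^\times)\to H^1(B,\iota_*\check\Lambda\otimes\CC^\times)$ is generally not injective; but since the statement concerns only the image, the argument at the level of cocycle representatives suffices.
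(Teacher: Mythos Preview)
Your proposal is correct and follows essentially the same route as the paper: the paper's proof consists of the single sentence ``This follows again by inspection of the corresponding results in \cite{logmirror1}, here Theorems~5.2 and~5.4,'' which is precisely what you have unpacked by routing through Proposition~\ref{Prop: Gluing data and slab function for the standard real structure} and the lifted gluing data classification. Your version is more explicit about the bookkeeping (real charts via Lemma~\ref{Lem: Existence of real charts}, functoriality of the cocycle construction), but the strategy is the same.
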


\begin{proof}
This follows again by inspection of the corresponding results in
\cite{logmirror1}, here Theorems~5.2 and~5.4.
\end{proof}

Recall from \S \ref{Sec: open gluing data} that lifted open gluing data particularly defines slab functions describing the log structure on the central fiber. We arrive at the following characterization of real lifted open gluing data in terms of these functions.

\begin{proposition}
\label{Prop: real slab functions}
Let $(X_0,\M_{X_0})$ be a polarized toric log Calabi-Yau space with intersection complex
$(B,\P)$. Then $(X_0,\M_{X_0})$ is defined by real lifted open gluing data if and only if the slab functions $f_{\rho,x} \in \CC[\Lambda_\rho]$
describing the log structure $\M_{X_0}$ are
defined over $\RR$, that is $f_{\rho,x}\in \RR[\Lambda_\rho]$ for
any $\rho\in\P$ of codimension one and $x$ a connected component of $\rho \setminus \Delta$.
\end{proposition}
\begin{proof}
 The relation between the slab functions $f_{\rho,x}$ and charts for the log
structure is given in \cite{logmirror1}, Theorem~3.22. At a zero-dimensional
toric stratum $x\in X_0$ the description in terms of open gluing data yields an
isomorphism of an open affine neighbourhood in $\Spec X_0$ with
$\Spec\CC[P]/(z^{\rho_P})$, with $P=\ol\M_{X,x}$ and $\rho_P\in P$ corresponding
to the deformation parameter $t$. The facets of $P$ not
containing\footnote{This condition is non-trivial only if $x\in\partial
B$} $\rho_P$ are in one-to-one correspondence with the irreducible components
of $X_0$ containing $x$. Now charts for the log structure on this open subset
are of the form
\[
P\lra \CC[P]/(z^{\rho_P}),\quad p\longmapsto h_p\cdot z^p
\]
with $h_p$ an invertible function on $V(p)$, the closure of the open subset
$(z^p\neq 0)\subset \Spec\CC[P]$. The equation describing this chart in terms of
functions on codimension one strata expresses the slab function
(written $\xi_\omega(h)$ in \cite{logmirror1}) as a quotient of multiplicative
functions (written $g_v$ in \cite{logmirror1}) defined in terms of $h_p$. This
equation shows that describing a real chart via real open gluing data yields
real slab functions $f_{\rho,x}$.

Conversely, given real open gluing data and real slab functions, the real
structure on $\Spec \CC[P]$ induces the involution $\iota_{X_0}^\flat$ defining
a standard real structure on $(X_0,\M_{X_0})$.
\end{proof}

\begin{remark}
\label{Rem: Real smoothing algorithm}
It is worthwhile pointing out that real structures on
$(X_0,\M_{X_0})$ are compatible with the smoothing algorithm of
\cite{affinecomplex} in the following way. Assume that
$(X_0,\M_{X_0})$ is a toric log Calabi-Yau space for which the
smoothing algorithm of \cite{affinecomplex} works, for example with
associated intersection complex $(B,\P)$ positive and simple. Assume
that $(X_0,\M_{X_0})$ has a real structure, not necessarily
standard. The real involution then induces a possibly non-trivial
involution on the intersection complex $(B,\P)$. But in any case,
$(X_0,\M_{X_0})$ has a description by open gluing data
$s=(s_{\omega\tau})$ and slab functions $f_{\rho,x}$ with the real
involution lifting to an action on these data. By the strong
uniqueness of the smoothing algorithm it is then not hard to see
that the real involution extends to the constructed family $\shX\to
\Spec\CC\lfor t\rfor$.
\end{remark}
\subsection{The real locus in the Kato--Nakayama space}
\label{Subsect: Topology in standard case}
Let us now assume we have a standard real structure on $(X_0,\M_{X_0})$. We will
investigate the topology of the real locus $X_{0,\RR}^\KN\subset X_0^\KN$, the
fixed locus of the lifted real involution of $X_0^\KN$ from Definition~\ref{Def:
Lifted real involution}. First, since $(O^\ls)^\KN_\RR=\{\pm1\}$, the real locus
of $X_{0,\RR}^\KN$ decomposes into two parts, the preimages of $\pm1$ under
$\delta^\KN: X_0^\KN\to (O^\ls)^\KN= S^1$. Denote by $X_{0,\RR}^\KN(\pm1)$
these two fibres.

\begin{proposition}
\label{Prop: real locus is a branched cover of B}
The restriction of $\mu^\KN: X_0^\KN\to B$ to the real locus
$X_{0,\RR}^\KN$ is surjective and has finite fibres. Over
$B\setminus \shA$, this map is a topological covering map with
fibres of cardinality $2^{n+1}$.
\end{proposition}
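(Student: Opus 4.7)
The plan is to treat the covering structure over $B\setminus\shA$ first and then establish surjectivity and finiteness of fibres on all of $B$ separately.

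First, over the interior of a maximal cell $\sigma\in\P_{\max}$, the trivialization from Lemma~\ref{Lem: mu^KN over maximal cell} gives $(\mu^\KN)^{-1}(\Int\sigma)\simeq \Int\sigma\times\Hom(\Lambda_\sigma\oplus\ZZ,U(1))$. The standard real structure restricts to the canonical toric real structure on each irreducible component $X_\sigma$, while on the $\ZZ$-factor coming from the deformation parameter it acts as the standard conjugation on $O^\ls$. Applying Proposition~\ref{Prop: X_RR^KN of toric} to $X_\sigma$ and combining with this action on the deformation factor identifies the real locus over $\Int\sigma$ with $\Int\sigma\times\Hom(\Lambda_\sigma\oplus\ZZ,\{\pm1\})$, a trivial cover of degree $2^{n+1}$.

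For the patching across a codimension-one cell $\rho=\sigma\cap\sigma'$ I would invoke Proposition~\ref{Prop: Gluing data and slab function for the standard real structure}: in the standard real case the open gluing data $s_{\omega\tau}$ and the slab functions $f_{\rho,v}$ are defined over $\RR$. The transition formula of Remark~\ref{Rem: Transition of X_^KN in codim=1}, namely $\arg u+\arg v=\arg f_{\rho,v}+\kappa_\rho\arg t$ corrected by a real open-gluing scalar, therefore involves only terms whose values lie in $\{\pm 1\}$ when evaluated on the real locus, and these signs are locally constant on $\Int\rho\setminus\shA$ since $\shA=\mu(Z)$ is precisely the momentum image of the zero locus of the slab functions. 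Thus the transitions restrict from $U(1)^{n+1}\rtimes\GL(n+1,\ZZ)$ to the subgroup $\{\pm1\}^{n+1}\rtimes\GL(n+1,\ZZ)$, preserving the real-locus subbundle. Running the star-shaped clutching argument from the proof of Proposition~\ref{Prop: (X_0)^KN->B is a torus bundle} now shows that $X_{0,\RR}^\KN\to B\setminus\shA$ is locally trivial with discrete fibre of cardinality $2^{n+1}$, i.e.\ a topological covering of the claimed degree.

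Surjectivity and finiteness of fibres on all of $B$ can be handled independently. For non-emptiness of the fibre over any $b\in B$, let $\tau\in\P$ be the smallest cell containing $b$, choose $x\in X_{\tau,\ge0}\cap\mu^{-1}(b)$ (possible because the toric momentum map sends $X_{\tau,\ge0}$ homeomorphically onto $\tau$), and pick a real chart $\beta:P\to\Gamma(U,\M_{X_0})$ near $x$ via Lemma~\ref{Lem: Existence of real charts}. The homomorphism $\theta:\M_{X_0,x}^\gp\to U(1)$ defined by $\theta(\beta(p))=1$ and $\theta(h)=h(x)/|h(x)|$ on invertibles is well-defined (real invertible functions at a positive real point have positive values in the appropriate chart) and satisfies $\theta\circ\iota_{X_0}^\flat=\ol\theta$, yielding a real lift by Proposition~\ref{Prop: fibres of pi_RR}(1). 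For finiteness, the standard real structure preserves each toric irreducible component and hence acts trivially on $\ol\M^\gp_{X_0}$; Proposition~\ref{Prop: fibres of pi_RR}(2) then presents $\pi_\RR^{-1}(x)$ for each $x\in X_{0,\RR}$ as a torsor under the finite group $\Hom(\ol\M^\gp_{X_0,x},\ZZ/2\ZZ)$ (finitely generated because $\M_{X_0}$ is at worst relatively coherent). Combined with the finiteness of the real locus of any real torus orbit $\mu^{-1}(b)\cap X_{0,\RR}$ (a finite disjoint union of sets of the form $\{\pm1\}^k$), this yields finite fibres everywhere.

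The main obstacle is the patching step: the transitions of Remark~\ref{Rem: Transition of X_^KN in codim=1} are genuinely non-constant in general, so the crux is to recognise that on the real locus of a real log space they degenerate to discrete sign data, locally constant off $\shA$ because real slab functions only change sign across their zero loci. A minor additional subtlety is verifying that the cocycle of such signs around higher-codimensional joints is consistent, which follows from the cocycle condition satisfied by real open gluing data.
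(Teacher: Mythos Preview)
Your argument for the covering over $B\setminus\shA$ is essentially the paper's, though you spell out the patching across codimension-one cells more explicitly via Proposition~\ref{Prop: Gluing data and slab function for the standard real structure} and the transition formula. The paper is terser: it simply observes that in each trivialization $\Phi_\sigma$ the real involution acts by the identity on $\sigma$ and by inversion on the $\Lambda_\sigma\oplus\ZZ$-factor, so the fixed locus is the set of $2$-torsion points $\{\pm1\}^{n+1}$; since the $\Phi_\sigma$ describe the same fibres of $X_0^\KN$ on overlaps, this already gives the unbranched $2^{n+1}$-cover over $B\setminus\shA$.

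For surjectivity and finiteness over all of $B$ you take a genuinely different route. The paper argues by closure and density: the image $\Phi_\sigma\big(\sigma\times\Hom(\Lambda_\sigma\oplus\ZZ,\{\pm1\})\big)$ is a closed subset of $X_0^\KN$ containing the real locus over $\sigma$ away from $Z^\KN$ and projecting to $\sigma$ with fibres of cardinality at most $2^{n+1}$; one then only needs that $Z^\KN$ is nowhere dense in $X_{0,\RR}^\KN$, which follows exactly as in Lemma~\ref{Lem: Z^KN nowhere dense} after noting that the generization maps of $\ol\M_{X_0}$ are compatible with the real involution. This yields surjectivity and finiteness in one stroke. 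Your finiteness argument via Proposition~\ref{Prop: fibres of pi_RR}(2) and the finiteness of real torus orbits is correct, but your surjectivity construction has a small gap: when $b\in\shA$ your chosen point $x\in X_{\tau,\ge0}\cap\mu^{-1}(b)$ may lie in $Z$, where the log structure is only relatively coherent and Lemma~\ref{Lem: Existence of real charts} (which presupposes an existing fine chart) does not directly apply. The paper's closure argument sidesteps this issue entirely, and also avoids having to verify that the putative $\theta$ is well-defined on $\M_{X_0,x}^\gp$ when the gluing data introduce negative real constants.
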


\begin{proof}
Let $\sigma\in\P$ be a maximal cell. In the canonical identification
$\Phi_\sigma$ of $(\mu^\KN)^{-1}(\sigma)\subset X_0^\KN$ away from
$\shA=\mu(Z)$ given in Lemma~\ref{Lem: mu^KN over maximal cell}, the standard
real involution on $(\mu^\KN)^{-1}(\sigma)\subset X_0^\KN$ lifts to the
involution of $\sigma\times \Hom(\Lambda_\sigma \oplus\ZZ, U(1))$ that acts by
the identity on $\sigma$ and by multiplication by $-1$ on
$\Lambda_\sigma\oplus\ZZ$. The fixed point set of this involution over each
point in $\sigma$ is the set of two-torsion points $(\pm1,\ldots,\pm1)$ of
$U(1)^{n+1}$. In particular, away from $\shA=\mu(Z)\subset B$, the projection
$X_{0,\RR}^\KN\to B$ is a $2^{n+1}$-fold unbranched cover.

In any case, $\Phi_\sigma \big(\sigma\times \Hom(\Lambda_\sigma
\oplus\ZZ, \{\pm1\}) \big)$ is a closed subset in $X_0^\KN$ containing
$X_{\sigma,\RR}\setminus Z$ and projecting with fibres of
cardinality at most $2^{n+1}$ to $\sigma$. The statement on
finiteness of all fibres then follows if $Z$ is nowhere dense in
$X_{0,\RR}^\KN$. This statement follows as in Lemma~\ref{Lem: Z^KN
nowhere dense} noting that the generization maps between stalks of
$\M_{X_0}$ at real points are compatible with the real involution.
\end{proof}

We thus see that $X_{0,\RR}^\KN$ can be understood by studying
(a)~the unbranched covering over $B\setminus\shA$ and (b)~the
behaviour near the log singular locus by means of the canonical
uniformization map $\Phi_\sigma$ of Lemma~\ref{Lem: mu^KN over
maximal cell}. Sometimes, for instance in dimension two, the unbranched cover
together with the fact that $X_{0,\RR}^\KN$ is a topological
manifold, determines $X_{0,\RR}^\KN$ completely.

\begin{remark}
\label{Rem: Transition of X_^KN in codim=1}
We can describe explicitly the homeomorphism of torus bundles
$\Phi_{\sigma'\sigma}$ in the proof of Theorem~\ref{Thm:
(X_0)^KN->B is a torus bundle} locally around some $x\in
B\setminus\shA$, in terms of slab functions. We restrict to the basic case
$\sigma\cap\sigma'=\rho$ of codimension one. Let $f$ be the
function defining the log structure along $X_\rho$ according
to~\eqref{Eqn: Slab equation}.  The Kato-Nakayama space has an additional
$U(1)$-factor coming from the deformation parameter $t$. This
additional factor gets contracted in $X_0$ along $X_\rho$, but not
in $X_0^\KN$. Thus over $X_\rho$, the Kato-Nakayama space is a
$U(1)^2$-fibration. One factor captures the phase of the
deformation parameter $t$, the other the phase of the monomial
$u$ (or $v$) describing $X_\rho$ as a divisor in $X_\sigma$ and
$X_{\sigma'}$, respectively. In these coordinates for $X_0^\KN$ over
$\sigma$ and $\sigma'$, the gluing $\Phi_{\sigma'\sigma}$ is
determined by taking the argument of~\eqref{Eqn: Slab equation}:
\begin{equation}
\label{Eqn: Gluing equation}
\arg(u)+\arg(v)= \kappa_\rho\cdot\arg(t) +\arg(f).
\end{equation}
For the unbranched cover, Lemma~\ref{Lem: mu^KN over maximal cell} together with
the gluing equation~\eqref{Eqn: Gluing equation} in Remark~\ref{Rem: Transition
of X_^KN in codim=1} provide a full description of $X_{0,\RR}^\KN$. Note also
that the gluing equation involves the term $\kappa_\rho\cdot\arg(t)$, which for
$\kappa_\rho$ odd and $\Arg(t)=-1$ leads to a difference in the identification
of branches over neighbouring maximal cells $\sigma\subset B$. Recall by Theorem \ref{Thm: (X_0)^KN->B is a torus bundle} that the Kato--Nakayama space over each maximal cell, away from the discriminant locus, is given by $\sigma\times \Hom(\Lambda_\sigma \oplus\ZZ,
U(1))\setminus \Phi_\sigma^{-1}(Z^\KN)$.
Hence, for any real log Calabi--Yau space  $(X_0,\M_{X_0})$ endowed with a standard real structure,
defined by real lifted open gluing data, $X_0^\KN$ is obtained by gluing
trivial pieces $\sigma\times \Hom(\Lambda_\sigma \oplus\ZZ,
U(1))\setminus \Phi_\sigma^{-1}(Z^\KN)$ via Equation~\eqref{Eqn:
Gluing equation}. This exhibits the real locus $X_{0,\RR}^\KN\setminus
Z^\KN$ as glued from the finite covers
\[\big(\sigma\times \Hom(\Lambda_\sigma
\oplus\ZZ, \{\pm1\})\big)\setminus \Phi_\sigma^{-1}(Z^\KN)\]
over the maximal cells $\sigma$.
This gluing is determined by studying possible identifications of each of these bundles over maximal cells, which is determined by the choice of gluing data.
%In
%particular, the sign of the function $f$ describing the gluing over
%a connected component of $\rho\setminus\shA$, $\dim\rho=n-1$,
%influences the identification of branches suggested by the
%identification of integral tangent vectors through affine parallel
%transport.
In general the specific choice of such identifications changes the
topology of $X_{0,\RR}^\KN$, and hence has to be studied case by case. Assuming
without loss of generality that the toric log Calabi-Yau space $(X_0,\M_{X_0})$
is normalized by the open gluing data, we can however give a neat global
description over the large subset $B'\subset B$ considered in
Proposition~\ref{Prop: X_0^KN versus Hom(widehat shP,U(1))}. In the simple case,
there is a retraction of $B\setminus\shA$ to $B'$ and this result is strong
enough to understand the unbranched cover over $B\setminus\shA$ completely. In
the general case, this result can be complemented by separate studies along the
interior of codimension one cells to gain a complete understanding of
the real locus over $B\setminus\shA$.
\end{remark}

As a preparation, we need to discuss the effect of the real
involution on Diagram~\eqref{Eqn: X_0^KN as an extension}, and in
particular on the middle vertical exact sequence
\[
0\lra \ul\ZZ\oplus\ul U(1)\lra \widehat\shP\lra\Lambda\lra 0.
\]
The action on the discrete part $\ul\ZZ$ and $\Lambda$ is induced
by the action on the cohomology of the torus fibres, which is
multiplication by $-1$. Similarly, we can act by multiplication by
$-1$ on each entry of the sequence defining $\shP$, forming the next
to rightmost column in~\eqref{Eqn: X_0^KN as an extension}. For the
extension by $U(1)$, however, taking the pushout with
complex conjugation on $U(1)$, maps the extension class $s\in
\Ext^1(\Lambda, \ul U(1))$ to its complex conjugate $\ol s$. Thus
only if this class is real, reflected in a real choice of open
gluing data (or slab functions as in Proposition~\ref{Prop: Gluing data and slab function
for the standard real structure}), there is an involution on
$\widehat\shP$ inducing multiplication by $-1$ on $\ul\ZZ$ and
$\Lambda$ and the conjugation on $U(1)$. Note also that the
extension class is real if and only if it lies in the image of
$\Ext^1(\Lambda,\ul\ZZ\oplus\{\pm1\})$ under the inclusion
$\{\pm1\}\to U(1)$. In this case, the extension of $\Lambda$ by $\ul
\ZZ\oplus U(1)$ is obtained by pushout from an extension by
$\ul\ZZ\oplus\{\pm1\}$. We now assume such an involution
$\iota_{\widehat\shP}$ of $\widehat\shP$ exists.

\begin{theorem}
\label{Thm: X_{0,RR}^KN in terms of extensions}
Let $(X_0,\M_{X_0})$ be a toric log Calabi-Yau space defined by real lifted open gluing data. Then the real locus in $X_0^\KN$ over $B'\subset B$ is given by
\[
X_{0,\RR}^\KN = \Hom^{\circ}(\widehat\shP,\{\pm1\})|_{B'} \subset
\Hom^{\circ}(\widehat\shP,U(1))|_{B'}.
\] 
where $\Hom^{\circ}(\widehat\shP,U(1))|_{B'}$ is defined as in Proposition~\ref{Prop: X_0^KN versus Hom(widehat shP,U(1))}.
\end{theorem} 
\begin{proof}
Recall the trivialization with fibres $\Lambda_\sigma\oplus\ZZ\oplus
U(1)$ of $\widehat\shP$ over the interior of a maximal $\sigma\in\P$
used in the proof of Proposition~\ref{Prop: X_0^KN versus
Hom(widehat shP,U(1))}. In this trivialization, the involution
$\iota_{\widehat\shP}$ acts by $-1$ on $\Lambda_\sigma\oplus\ZZ$ and
by conjugation on $U(1)$. Taking homomorphisms to $U(1)$ and
restricting to those homomorphisms inducing the identity on the
$U(1)$-factor, identifies the fibres of $X_0^\KN$ over $\Int\sigma$
with $\Hom(\Lambda_\sigma\oplus\ZZ, U(1))$. The fixed point locus of
the induced action of $\iota_{\widehat\shP}$ is then the set of
homomorphisms to the two-torsion points of $U(1)$, that is,
$\Hom(\Lambda_\sigma\oplus \ZZ,\{\pm1\})$, as claimed.
\end{proof}

\begin{remark}
\label{Rem: Monodromy description of X_{0,RR}^KN}
The topology of the $2^{n+1}$-fold cover of $B'$ can
also be described in terms of the monodromy representation as
follows. Analogously to the discussion for $\shP$ in 
Remark~\ref{Rem: monodromy representation}, the monodromy
representation of $\widehat\shP$ is given by viewing
$(c_1(\varphi),s)\in \Ext^1(\Lambda,\ul\ZZ\oplus\ul U(1))$
as a pair of twisted homomorphisms,
\[
(\lambda, \theta):
\pi_1(B',x)\lra \Hom(\Lambda_x, \ZZ\oplus U(1)).
\]
Explicitly, for a closed loop $\gamma$ at $x$, the action of
$(\lambda,\theta)(\gamma) = (\lambda_\gamma,\theta_\gamma)$ on the
fibre of $\widehat\shP_x \simeq\Lambda_x\oplus\ZZ\oplus U(1)$ is
\[
\Lambda_x\oplus\ZZ\oplus U(1)\ni (v,a,\beta)\longmapsto
(T_\gamma\cdot v, \lambda_\gamma\cdot v+a, \theta_\gamma(v)\cdot
\beta).
\]
Here $T_\gamma\in \GL(\Lambda_x)$ is from parallel transport in
$\Lambda$. If the open gluing data $s$ are real, $\theta$ takes
values in $\{\pm1\}\subset U(1)$. Thus in the real case,
$(\lambda,\theta)$ is a twisted homomorphism with values in
$\Hom(\Lambda_x,\ZZ\oplus \{\pm1\})$.

In view of Theorem~\ref{Thm: X_{0,RR}^KN in terms of
extensions}, the monodromy representation of $X_{0,\RR}^\KN$ over
$B'$ is given by the induced action on $\Hom^\circ\big(\Lambda_x
\oplus\ZZ\oplus \{\pm1\}, \{\pm1\} \big)
=\Hom(\Lambda_x,\{\pm1\}) \oplus \Hom(\ZZ, \{\pm1\})$. Note that the
last summand in $\Lambda_x \oplus\ZZ\oplus \{\pm1\}$ does not
contribute to the right-hand side, since we restricted to those
homomorphisms inducing the identity on $0\oplus 0 \oplus\{\pm1\}$.
The action of a closed loop $\gamma$ on
$\Hom(\Lambda_x,\{\pm1\}) \oplus \Hom(\ZZ,\{\pm1\})$ is now readily
computed as
\begin{equation}
\label{Eqn: Monodromy formula for real locus}
(\varphi,\mu)\longmapsto  \big(\varphi\circ T_\gamma+
\mu\circ\lambda_\gamma+ \theta_\gamma,\mu\big),
\end{equation}
Here we wrote the group structure on $\{\pm1\}$ additively.
This formula gives an explicit description of $X_{0,\RR}^\KN$ over
$B'$ in terms of a permutation representation of $\pi_1(B',x)$ on
the set $\check\Lambda_x/2\check\Lambda_x\oplus \{\pm1\}$ of
cardinality $2^{n+1}$.

In this description, the map to the real part
$(O^\ls)^\KN_\RR=\{\pm1\}$ of the Kato-Nakayama space $U(1)$ of the
standard log point, is induced by the inclusion $\ZZ \oplus\{\pm1\}
\to \Lambda_x\oplus\ZZ\oplus \{\pm1\}$. Thus to describe the
fibres over $\{\pm1\}\subset (O^\ls)^\KN$ in $X_{0,\RR}^\KN$
simply amounts to restricting to $\mu=\pm1$ in \eqref{Eqn:
Monodromy formula for real locus}. In particular, $c_1(\varphi)$
only becomes relevant for the fibre over $-1$. This fact can also be
seen from the gluing description of~\eqref{Eqn: Gluing equation},
where $\kappa_\rho$ is the only place for $c_1(\varphi)$ to enter.
\end{remark}

%===========================================================

%===========================================================

\section{Examples}
\label{Examples}
%===========================================================
\subsection{A degeneration of K3 surfaces with non-simple singularities}
\label{Subsect: quartic K3}
In this section we study a toric degeneration of real quartic K3 surfaces, and investigate the topology of the real locus, based on the general results discussed in \S\ref{Subsect: X_0^KN real}. We first study an example of a degeneration of $K3$ surfaces where the associated intersection complex does not have simple singularities. It is
nonetheless locally rigid, and the smoothing algorithm of \cite{affinecomplex} is generalized to this case in dimension two -- for details see \cite[Appendix A4]{theta}.

\begin{figure}
\center{\input{tetrahedron.pspdftex}}
\caption{$(B,\P)$ for a quartic K3 surface}
\label{Fig: tetrahedron}
\end{figure}

Consider $(B,\P)$ the polyhedral affine manifold that as an integral cell
complex is the boundary of a $3$-simplex, and is given by the convex hull of the set of points $\{(1; 0; 0); (0; 1; 0); (0; 0; 1); (-1,-1,-1)\}$, with four focus-focus singularities on
each edge 
%and with the complete fan at each vertex the fan of $\PP^2$
as illustrated in Figure~\ref{Fig: tetrahedron}. Note that to ensure simplicity we would require to have only one focus-focus singularity on each edge of $\P$, therefore in this case $(B,\P)$ is not simple. There are four maximal cells, each isomorphic
to the standard simplex in $\RR^2$ with vertices $(0,0)$, $(1,0)$ and $(0,1)$.
The edges have integral length $1$ and are identified pairwise to yield the
boundary of a tetrahedron. On each of the six edges there are four singular
points of the affine structure, with monodromy conjugate to
$\left(\begin{smallmatrix}1& 0\\1& 1\end{smallmatrix}\right)$. In this example $(B,\P)$ does not have simple singularities \cite[\S $5.1$]{logmirror1} as there are four focus-focus singularities on each edge, but it is
locally rigid in the sense of \cite{affinecomplex}, Definition~1.26. Thus the
smoothing algorithm of \cite{affinecomplex} works, yielding a one-parameter
smoothing of $X_0$, one for each choice of slab functions.

Consider the trivial
(``vanilla'') gluing data, that is, $s_{\omega\tau}=1$ for all $\omega,\tau\in
\P$, $\omega\subset\tau$. Then $X_0$ is isomorphic as a scheme to $Z_0 Z_1 Z_2
Z_3=0$ in $\PP^3$, a union of four copies of $\PP^2$. As the MPL-function
defining the ghost sheaf $\ol\M_{X_0}$ we take the function with kink
$\kappa_\rho=1$ on each of the edges. The moduli space of toric log Calabi-Yau
structures on $X_0$ with the given $\ol\M_{X_0}$ is described by the space of
global sections of an invertible sheaf $\shLS$ on the double locus
$(X_0)_\sing$ (\cite{logmirror1}, \S5). This line bundle has degree~$4$ on each of the six
$\PP^1$-components.
%\Huelya{Which section? Shall we write each section in the following sentencce?}
%Bernd{I don't understand your question: We have a line bundle on the double locus and to specify the log structure amounts to specifying one section. Further below I am classifying all such sections in terms of a choice of three coefficients $a_1,a_2,a_3$ on each edge. Is that not explicit enough?}
The section is explicitly described by the
functions $f_{\rho,y}$. Let $v$ and $v'$ the two vertices adjacent to $\rho$ and let $f_{\rho,v}$ and $f_{\rho,v'}$ be slab functions attached to the connected components of $\rho \setminus \Delta$
containing $v$ and $v'$.
They are 
related by the equation $f_{\rho,v}(x)= x^4 f_{\rho,v'}(x^{-1})$ for $x$ the
toric coordinate on $\PP^1$ (see e.g.\ \cite{affinecomplex}, Equation~1.11). The slab functions $f_{\rho,y}$ for $y$ another connected component of $\rho \setminus \Delta$ are similarly determined in terms of $f_{\rho,v}$.
Explicitly, restricting to the normalized case, we have $f_{\rho,v}=1+a_1 x+a_2
x^2+a_3 x^3+x^4$, the highest and lowest coefficients being~$1$ due to the
normalization condition at the two zero-dimensional toric strata of the
projective line $X_\rho\subset X_0$. The other coefficients $a_i\in\CC$ are
free, to give a total of $6\cdot 3=18$ parameters. Taking into account the
additional deformation parameter $t$, this number is in agreement with the
$19$~dimensions of projective smoothings of $X_0$ (indeed, we obtain a versal family as shown in \cite{RS2}). See the appendix of
\cite{theta} for a discussion of projectivity in this context. 

By Proposition~\ref{Prop: real slab functions} this smoothing is real if and only if all slab functions are real,
that is, if all coefficients $a_i\in\RR$. To obtain $4$~focus-focus
singularities on each edge of $\P$ as drawn in Figure~\ref{Fig: tetrahedron}, we
need to choose the $a_i$ in such a way that the $4$~zeros of $f_{\rho,v}= 1+a_1
x+a_2 x^2+a_3 x^3+x^4$ have pairwise different absolute values. These are then
also all real (if not, as the $a_i$ are real, there is at least one pair of complex conjugated roots, which then have the same absolute value, contradiction). This condition is open in the Euclidean topology, but the closure
is a proper subset of $\RR^3$, the space of tuples $(a_1,a_2,a_3)$ fulfilling
this condition. The precise choice does not matter for the following discussion
and we assume such a choice has been made for each edge.

\begin{proposition}
\label{Prop: Standard real quartic}
Let $(X_0,\M_{X_0})$ be the union of four copies of $\PP^2$ with the real log
structure as described. Assume that the zeros of the all the slab functions $f_{\rho,y}$ are all
negative. Denote by $\shA\subset B$ the pairwise different images of the $24$
singular points of the log structure (the zero loci of the slab functions).

Then the fibre $X_{0,\RR}^\KN(1)$ of $\delta_\RR^\KN: X_{0,\RR}^\KN\to
\{\pm1\}\subset U(1)$ has two connected components, one mapping
homeomorphically to $B$, the other a branched covering of degree
$3$, unbranched over $B\setminus\shA$ and with a simple branch
point over each point of $\shA$. In particular, the latter
component is a closed orientable surface of genus~$10$.
\end{proposition}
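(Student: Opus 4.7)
My plan is to invoke Proposition~\ref{Prop: real locus is a branched cover of B} together with the explicit monodromy description of Remark~\ref{Rem: Monodromy description of X_{0,RR}^KN}. The former identifies $X_{0,\RR}^\KN(1)\to B$ as a topological cover of degree $2^n=4$ over $B\setminus\shA$, with fibre canonically $\check\Lambda_x/2\check\Lambda_x\cong(\ZZ/2\ZZ)^2$. Since the gluing data are vanilla, the cocycle $\theta_\gamma$ of Remark~\ref{Rem: Monodromy description of X_{0,RR}^KN} is trivial, and fixing $\mu=+1$ (i.e.~$0$ additively) reduces the monodromy action on the fibre to $\varphi\mapsto \varphi\circ T_\gamma$, with $T_\gamma\in\GL(\Lambda_x)$ the affine monodromy along $\gamma$.

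Next I would analyse the orbit structure. Around each focus-focus singularity the monodromy $T_\gamma$ is a transvection fixing the primitive integral direction $e\in\Lambda_x$ along the incident edge of $\P$; explicitly $T_\gamma(v)=v+\ell(v)\,e$ for a primitive covector $\ell$ with $\ell(e)=0$. Its dual action on covectors is $\varphi\mapsto \varphi+\varphi(e)\,\ell$, fixing the two covectors vanishing on $e$ (namely $0$ and the unique nonzero covector $\varphi_0\in\ker e$) and exchanging the other two nonzero covectors. At any vertex of the tetrahedron the three incident edges are the rays of the fan of $\PP^2$ and reduce modulo~$2$ to the three distinct nonzero elements of $\Lambda_x/2\Lambda_x$; the three associated transvections thus realise all three transpositions in $\GL(2,\ZZ/2\ZZ)\cong S_3$. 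Hence $\pi_1(B\setminus\shA,x)$ surjects onto $S_3$, and its orbits on $(\ZZ/2\ZZ)^2$ are $\{0\}$ and the set of the three nonzero covectors. This produces two connected components of $X_{0,\RR}^\KN(1)$ over $B\setminus\shA$: a section (giving one component $\Sigma_0$), and a connected $3$-fold cover.

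To finish, I would extend across the $24$ points of $\shA$ and compute the genus. The section extends trivially to yield $\Sigma_0\to B$ a homeomorphism. For the $3$-cover I would apply Proposition~\ref{Lem: Branched cover via gluing patches} together with the negativity assumption on the slab functions, which fixes the signs of $f$ that enter the gluing equation~\eqref{Eqn: Gluing equation}: the local monodromy $2$-cycle translates into two sheets coming together as a simple branch point of the form $w\mapsto w^2$, while the fixed nonzero covector extends as an unbranched sheet. Hence the $3$-cover extends to a connected closed surface $\Sigma$ with a simple branch point over each point of $\shA$; the local complex model $w\mapsto w^2$ endows $\Sigma$ with a Riemann surface structure and hence an orientation. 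Riemann--Hurwitz then gives
\[
\chi(\Sigma)=3\,\chi(S^2)-\sum_{p\in\shA}(e_p-1)=6-24=-18,
\]
so $g(\Sigma)=10$, as claimed.

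The main obstacle I expect is the careful bookkeeping in the third step: extending the cover across $\shA$ via the local slab-function data, and in particular verifying that the negative-root hypothesis is precisely what guarantees the simple branching pattern, rather than some more exotic identification that would fail to produce a manifold or would merge the two orbit components.
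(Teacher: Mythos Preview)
Your proposal is correct and follows essentially the same strategy as the paper: identify the $4$-fold cover over $B\setminus\shA$ via the monodromy formula of Remark~\ref{Rem: Monodromy description of X_{0,RR}^KN}, check that the local transvections around the focus-focus points act as the three transpositions of $S_3$ on the nonzero elements of $(\ZZ/2\ZZ)^2$, and conclude with Riemann--Hurwitz. The paper carries out the same monodromy computation with explicit matrices $T_1,T_2,T_3$ at a vertex, and for the extension across $\shA$ it appeals to the local model of Example~\ref{Expl: Focus-focus singularity II} (the degeneration $xy=t(w+1)$) rather than to Proposition~\ref{Lem: Branched cover via gluing patches}; this is a cleaner way to see the simple branch point, since that proposition only describes the gluing away from $Z^\KN$. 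One small caveat: Remark~\ref{Rem: Monodromy description of X_{0,RR}^KN} is stated over $B'$, while your loops $\gamma$ around points of $\shA$ leave $B'$; the paper notes explicitly that the formula~\eqref{Eqn: Monodromy formula for real locus} nonetheless applies here, a point you should also justify.
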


\begin{proof}
In the present case of trivial gluing data with real slab functions
$f_{\rho,y}$ without zeros on the positive real axis, the positive real sections
$\sigma\times \{1\}\subset \sigma\times \Hom(\Lambda_\sigma,\{\pm1\})$ of the
pieces of $X_0^\KN$ over maximal cells (Lemma~\ref{Lem: mu^KN over
maximal cell}) are compatible to yield a section of $X_{0,\RR}^\KN(1)\to B$. In
fact, all arguments in Equation~\ref{Eqn: Gluing equation} describing the gluing
vanish. The image of this section is thus a $2$-sphere mapping homomorphically
to $B$. 

To describe the remaining three branches of $X_{0,\RR}^\KN(1)\to B$,
observe that at each point of $\shA$ there are local analytic coordinates
$x,y,w$, defined over $\RR$, with $(X_0,\M_{X_0})$ isomorphic to the central
fibre of the degeneration $xy=t(w+1)$ discussed in Example~\ref{Expl:
Focus-focus singularity II}. In this example we checked that the real locus of the
Kato-Nakayama space has three connected components, with two being sections and
one a two-fold branched cover with one branch point over $B$. Thus
$X_{0,\RR}^\KN(1)\to B$ is a cover of degree~$4$, simply branched at the
$24$~points of $\shA$ and having at least one section.

To finish the proof we have to study the global monodromy representation
$\pi_1(B\setminus \shA)\to S_4$ into the permutations of the four fixed points
of the real involution on a fibre and show it has at most two irreducible
subrepresentations. We compute a part of the affine monodromy representation and
then use Remark~\ref{Rem: Monodromy description of X_{0,RR}^KN} and notably
Equation~\eqref{Eqn: Monodromy formula for real locus} to obtain the induced
monodromy representation in $S_4$.

\begin{figure}
\center{\input{chart_quartic.pspdftex}}
\caption{Chart at a vertex of $(B,\P)$ from Figure~\ref{Fig: tetrahedron}}
\label{Fig: chart at vertex}
\end{figure}

Figure~\ref{Fig: chart at vertex} depicts a chart at a vertex $v$ with its three
adjacent maximal cells. The chart gives the affine coordinates in the union of
the three triangles minus the dotted lines. The locations of the $12$~singular
points on the outer dotted lines are irrelevant in this chart and are hence
omitted. We look at the part of the fundamental group spanned by the three loops
$\gamma_1$, $\gamma_2$, $\gamma_3$. Each loop encircles one
focus-focus singularity on an edge containing the vertex and hence the affine
monodromy along any such loop is conjugate to
$\left(\begin{smallmatrix}1& 0\\1& 1\end{smallmatrix}\right)$. In particular,
the translational part vanishes. Concretely, in standard coordinates of $\RR^2$,
the monodromy matrices $T_i$ along $\gamma_i$ are
\begin{equation}
\label{Eqn: monodromy matrices}
T_1=\left( \begin{matrix} 1&0\\1&1\end{matrix}\right),\quad
T_2=\left( \begin{matrix} 2&-1\\1&0\end{matrix}\right),\quad
T_3=\left( \begin{matrix} 1&-1\\0&1\end{matrix}\right).
\end{equation}
Now while the $\gamma_i$ are not loops inside $B'$ as treated in
Remark~\ref{Rem: Monodromy description of X_{0,RR}^KN}, it is not
hard to see that \eqref{Eqn: Monodromy formula for real locus}
still applies in the present case. We have $\mu=1$ since we look at
$X_{0,\RR}^\KN(1)$ and $\theta_{\gamma_i}=1$ also for the
translational parts. Thus \eqref{Eqn: Monodromy formula for real
locus} says that the branches transform according to the linear part
of the affine monodromy. Now indeed a slab function $f_{\rho,x}$
changes signs locally along the real locus over an edge whenever
crossing a focus-focus singularity. For $X_{0,\RR}^\KN(1)$ this
means that the two branches given by $\Hom(\Lambda_\rho,\{\pm1\})
\subset \Hom(\Lambda_\sigma,\{\pm1\})$ have trivial monodromy around
any focus-focus singularity on $\rho$, while the two other branches
swap.

It thus remains to compute the action of the transpose $T_i^t$ on the two-torsion
points $\ZZ^2/2\ZZ^2$ of the real two-torus
$\Hom(\Lambda_v,U(1)) \simeq\RR^2/\ZZ^2$. These are the four vectors
\[
u_0=\left(\begin{matrix}0\\0\end{matrix}\right),\quad
u_1=\left(\begin{matrix}1\\0\end{matrix}\right),\quad
u_2=\left(\begin{matrix}0\\1\end{matrix}\right),\quad
u_3=\left(\begin{matrix}1\\1\end{matrix}\right),
\]
Here $u_0$ is the point in the positive real locus, yielding the section already treated at the beginning of the proof. The permutation of the indices of the other three
vectors yield the three transpositions $(23)$, $(12)$, $(13)$ for
$T_1^t$, $T_2^t$ and $T_3^t$, respectively. These transpositions act
transitively on $\{u_1,u_2,u_3\}$, showing connectedness of the cover of
degree~$3$. This component is a genus $10$ surface by the Riemann Hurwitz
formula.
\end{proof}
%===========================================================
\subsection{K3 surfaces with simple
singularities}

As a second, related family of examples we consider toric degenerations of K3
surfaces such that the associated intersection complex $(B,\P)$ has simple
singularities. We take a subdivision of the polyhedral decomposition $\P$ of the tetrahedra $B$ studied in \S\ref{Subsect: quartic K3}, so that every point in the discriminant locus of the affine structure lies between different vertices of $\P$. By taking this subdivision we ensure that $(B,\P)$ has actually (strongly) semi-simple
singularities as discussed in \S\ref{Subsect: Faithful and simple toric degenerations}. Recall that each toric irreducible component of the central fiber of a toric degeneration has momentum map image corresponding to a maximal cell of the polyhedral decomposition $\P$, hence in this case the central fiber of the degeneration has several more irreducible components, unlike in the non-simple case. We illustrate a face of the tetrahedra $B$, along with the subdivision $\P$ obtained by taking barycentric subdivisions in Figure \ref{Fig: simple}.

\begin{figure}
\center{\input{Simple.pspdftex}}
\caption{The polyhedral decomposition on a triangular face of the tetrahedra in case of simple singularities}
\label{Fig: simple}
\end{figure}

In this case the possible topologies of $X_{0,\RR}^\KN$ are
determined by Theorem~\ref{Thm: X_{0,RR}^KN in terms of extensions}.
Interestingly, for the fibre $X_{0,\RR}^\KN(1)$ over $1\in (O^\ls)^\KN=U(1)$, the question becomes
a purely group-theoretic one. In fact, according to \eqref{Eqn: Monodromy
formula for real locus}, for $\mu=1$ the translational part $\lambda$ of the
affine monodromy representation does not enter in the computation. Moreover, by
a classical result of Livn\'e and Moishezon, the linear part of the monodromy
representation for an affine structure on $S^2$ with $24$ focus-focus
singularities is unique up to equivalence \cite[pg.~179]{Moishezon}. The result
says that there exists a set of standard generators $\gamma_1,\ldots,
\gamma_{24}$ of $\pi_1(S^2\setminus\text{$24$ points},x)$, closed loops mutually
only intersecting at $x$ and with composition $\gamma_1 \cdot\ldots
\cdot\gamma_{24}$ homotopic to the constant loop, such that the monodromy
representation takes the form
\[
T_{\gamma_i}=\begin{cases} T_3,& i\ \text{odd}\\
T_1,& i\ \text{even},\end{cases}
\]
with $T_1$, $T_3$ as in \eqref{Eqn: monodromy matrices}. As in \S\ref{Subsect:
quartic K3}, the corresponding monodromy of the four elements in
$\Hom\big(\Lambda_x, \{\pm1\}\big) \simeq \ZZ^2/2\ZZ^2$ are $(23)$ and $(13)$,
respectively. Thus the computation only depends on the choice of the twisted
homomorphism $\theta\in H^1(B',\check\Lambda\otimes \{\pm1\})$. Now each
$\theta_\gamma$ acts by translation on the fibre $\ZZ^2/2\ZZ^2$. If
$\theta_\gamma$ is non-trivial, the permutation is a double transposition. But
any double transposition together with $(23)$ and $(13)$ act transitively on
the $4$-element set. Thus $X_{0,\RR}^\KN(1)$ is connected as soon as $\theta\neq0$;
otherwise we have two connected components as in Proposition~\ref{Prop: Standard
real quartic}.

\begin{proposition}
Let $(X_0,\M_{X_0})$ be a toric log K3 surface with intersection
complex $(B,\P)$ having simple singularities and endowed with a
standard real structure. Denote by $\theta\in
H^1(B,\check\Lambda\otimes\{\pm1\})$ the argument of the 
associated lifted real gluing data according to Corollary~\ref{Cor:
Standard real structure cohomological}.
Then $X_{0,\RR}^\KN(1)$ has a connected component mapping
homeomorphically to $B\simeq S^2$ if and only if $\theta=0$, and is
otherwise connected.
\qed
\end{proposition}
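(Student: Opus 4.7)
The plan is to apply Proposition~\ref{Prop: X_{0,RR}^KN in terms of extensions} together with the explicit monodromy description of Remark~\ref{Rem: Monodromy description of X_{0,RR}^KN}. Restricting \eqref{Eqn: Monodromy formula for real locus} to the fibre over $1\in U(1)$, i.e.\ setting $\mu=1$, makes the composition $\mu\circ\lambda_\gamma$ trivial and therefore kills the contribution of the translational part of the affine monodromy. The action of a loop $\gamma\in\pi_1(B',x)$ on $\Hom(\Lambda_x,\{\pm1\})\simeq(\ZZ/2\ZZ)^2$ thus reduces to $\varphi\longmapsto \varphi\circ T_\gamma+\theta_\gamma$, and the connectedness question for $X_{0,\RR}^\KN(1)$ becomes a purely group-theoretic orbit computation for a subgroup of $S_4$ acting on the four two-torsion points $u_0,u_1,u_2,u_3$.

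Next I would invoke the Livné--Moishezon classification: for a $\ZZ$-affine structure on $S^2$ with $24$ focus-focus singularities, which is exactly the structure on $B$ in the simple case, the linear part of the monodromy representation is unique up to equivalence. Concretely one gets standard generators $\gamma_1,\ldots,\gamma_{24}$ of $\pi_1(S^2\setminus\{24\text{ points}\},x)$ whose linear monodromies alternate between $T_1$ and $T_3$ in the notation of \eqref{Eqn: monodromy matrices}. By the computation already carried out in the proof of Proposition~\ref{Prop: Standard real quartic}, the transposes $T_1^t$ and $T_3^t$ fix $u_0=0$ and induce the transpositions $(23)$ and $(13)$ on $\{u_1,u_2,u_3\}$.

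Now I split into two cases. If $\theta=0$, the monodromy action is purely by the linear part, so $u_0$ is a global fixed point (matching the positive real section from trivial gluing data), while $\langle(23),(13)\rangle=S_3$ acts transitively on $\{u_1,u_2,u_3\}$. Hence $X_{0,\RR}^\KN(1)$ decomposes into a connected component mapping homeomorphically onto $B\simeq S^2$ and a connected $3$-fold cover. Conversely, if $\theta\neq0$, pick a loop $\gamma$ with $\theta_\gamma\neq0$; then the corresponding element of $S_4$ is translation by a non-zero vector in $(\ZZ/2\ZZ)^2$, namely a product of two disjoint transpositions. Any such double transposition, combined with $(23)$ and $(13)$, generates a transitive subgroup of $S_4$, so the four two-torsion points form a single orbit and $X_{0,\RR}^\KN(1)$ is connected.

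The main obstacle to this strategy is the clean invocation of Livné--Moishezon in the present hybrid algebro-geometric/affine setup: one must verify that the loops encircling the log-singular locus of $(B,\P)$ can be chosen simultaneously in the standard alternating form, independently of the concrete position of $\shA$ in $B$, as long as simple singularities are present. Once this normalization is secured, the remainder of the argument is a short permutation-group exercise together with the sign-change analysis for the slab functions already established in Proposition~\ref{Lem: Branched cover via gluing patches}.
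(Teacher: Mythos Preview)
Your proposal is correct and follows essentially the same approach as the paper: reduce via Remark~\ref{Rem: Monodromy description of X_{0,RR}^KN} with $\mu=1$ to the action $\varphi\mapsto\varphi\circ T_\gamma+\theta_\gamma$ on $(\ZZ/2\ZZ)^2$, normalize the linear part using Livn\'e--Moishezon to the alternating $T_1,T_3$ form, and then do the orbit computation in $S_4$ depending on whether $\theta$ vanishes. The paper's argument (given in the paragraph preceding the proposition) is identical, including the same minor looseness you have in identifying the monodromy along a loop $\gamma$ with $\theta_\gamma\neq 0$ as a pure double transposition rather than its composite with the linear part; in either version the transitivity claim is an easy check once one observes that the subgroup generated contains both a transposition from $\{(13),(23)\}$ and an element moving $u_0$.
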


For $X_{0,\RR}^\KN(-1)$, the translational part of the affine monodromy enters
in \eqref{Eqn: Monodromy formula for real locus}. The action is also by
translation, hence lead to a double transposition if non-trivial. A similar
analysis then shows that if $X_{0,\RR}^\KN(-1)$ is not connected, one connected
component maps homeomorphically to $B\simeq S^2$.

%===========================================================
\subsection{A non-standard real structure on local $\PP^2$}
\label{Subsect: local PP2}

As one example with a non-standard real structure we discuss the toric
degeneration of the canonical bundle of $\PP^2$ from \cite[Ex.~5.1]{invitation}. This example features a finite, real wall structure and thus yields
an algebraic family over $\AA^1$ defined over $\RR$, which we view as an
analytic toric degeneration $\shX\to \CC$. Denote by $\iota$ the real involution
of $\shX$. The central fibre has six irreducible components with a single
compact component, $\PP^2\times\PP^1$. The affine manifold $B$ is sketched in Figure~\ref{fig.LocalP2}, with the dotted lines indicating the singular locus of the affine structure.
\begin{figure}[ht]
\input{LocalP2.pspdftex}
\caption{The tropical manifold for a toric degeneration of $K_{\PP^2}$.}
\label{fig.LocalP2}
\end{figure}

The labels at the vertices refer to the corresponding theta functions of
level~$1$ \cite{theta}. There is a global vertical integral vector field with
corresponding affine projection $\pi: B\to\RR^2$ contracting the three vertical
line segments of the central prism. The theta function for the corresponding
asymptotic monomial defines a global holomorphic function without zeroes or
poles, denoted $s$ in \cite{invitation}. Such a function does not exist on
$K_{\PP^2}$ and indeed, the general fibre of $\shX\to \CC$ turns out to be the
complement $K_{\PP^2}\setminus\Gamma_t$ of the graph $\Gamma_t$ of a meromorphic
section without zeros and with poles along the three coordinate lines of
$\PP^2$. In an affine chart containing the zero-dimensional toric stratum
labelled $U$ we have coordinates $z=W/U= Z/X$, $v=V/U= Y/X$ and $r$, fulfilling
the equation
\begin{equation}
\label{Eqn: Chart local PP2}
rvzs=(1+s)t.
\end{equation}
Here $r$ is also the restriction of a canonically defined global function, the theta
function for the primitive integral vector field along the unbounded edge
emanating from the vertex labelled $U$. In this affine chart the embedding of
the fibre $X_t$ over $t$ into $K_{\PP^2}$ is defined set-theoretically by
mapping $(v,z,r,s)$ to the $2$-form $r dz\wedge dv$ in the fibre of $K_{\PP^2}$
over $[1,v,z]\in\PP^2$. In particular, $r$ restricts to a linear function on
each fibre of $K_{\PP^2}$ and \eqref{Eqn: Chart local PP2} expresses $s$ in
terms of $r$ and the local coordinates $z,v$ of $\PP^2$:
\[
s=\frac{t}{vzr-t}.
\]
Thus the hypersurface $\Gamma_t$ removed in this chart is given by $r=t/vz$, the
pull-back of the standard hyperbola via $r$ and $zv$. Note also that for
$t\neq0$ the restriction of $s$ to the zero section $r=0$ or to the coordinate
lines $zv=0$ is constant $-1$.

Now the bounded cell, the prism, has a reflection symmetry inverting the
vertical lines and acting by swapping the vertices $X$ with $U$, $Y$ with $V$
and $Z$ with $W$. It can be checked that this involution extends uniquely to an
involution $\kappa$ of $B$ acting by affine isomorphisms with integral linear
part (and half-integral translational part) on each cell and respecting the
projection $\pi:B\to\RR^2$. The fixed locus $B^\kappa\subset B$ is a copy of
$\RR^2$ embedded in $B$. Taking the discriminant locus $\Delta$ barycentric, it
holds $\Delta\subset B^\kappa$ and then $\kappa$ induces an orientation
reversing involution of the integral affine manifold $B$ with its polyhedral
decomposition $\P$.

This involution respects the initial slab functions and in turn the whole wall
structure and hence induces an involution $\tilde\kappa$ of the toric
degeneration $\shX\to \CC$. Explicitly, $\tilde\kappa$ swaps the two lines of
six defining local equations displayed in \cite[Ex.~5.1]{invitation}. The
action on our coordinates in \eqref{Eqn: Chart local PP2} is as follows:
\[
\tilde\kappa^*(z)=z,\quad \tilde\kappa^*(v)=v,\quad
\tilde\kappa^*(s)=s^{-1},\quad \tilde\kappa^*(r)=rs.
\]
Thus away from the coordinate lines of $\PP^2$, the involution $\tilde\kappa$ is
the involution of $\CC\setminus \{t/vz\}\simeq\CC^*$ that interchanges the pole
of $s$ at $r=t/vz$ with the zero at $\infty$ in such a way that $s$ turns into
$s^{-1}$. For fixed $t\neq 0$, taking $vz\to 0$ in Equation~\eqref{Eqn: Chart
local PP2} implies that $s=-1$ over the coordinate lines. Now $s=-1$ is a fixed
point of the involution swapping $s$ and $s^{-1}$. Hence $\tilde\kappa$ acts
trivially over the coordinate lines of $\PP^2$.

Observe that $\tilde\kappa$ commutes with the standard
real involution $\iota$ of $\shX$, given by complex conjugation. The composition
\[
\tilde\iota=\iota\circ\tilde\kappa=\tilde\kappa\circ\iota
\]
thus defines another antiholomorphic involution of $\shX$. This non-standard real
involution still commutes with the canonical real structure on $\PP^2$ and hence
lies over the real locus $\RR\PP^2$ of $\PP^2=\CC\PP^2$. But for
$t\in\RR\setminus\{0\}$, in the above embedding of $X_t$ into $K_{\PP^2}$, the
non-standard real involution acts fibrewise by $\tilde\iota^*(s)= \ol s^{-1}$.
Thus the fibre of $(X_t)_\RR$ over $[1,z,v]\in\RR\PP^2$ with $vz\neq0$ is the
circle in the $r$-plane with center $r=t/vz\in\RR\setminus\{0\}$ and passing
through the origin, that is, with radius $|t/vz|$. Indeed, for any
$z,v\in\RR\setminus\{0\}$ and $r=\frac{t}{vz}\big(1+ e^{\varphi i}\big)\in\CC$
we can take $s=e^{-\varphi i}$ to obtain a point in $(X_t)_\RR$. The radius
$|t/vz|$ of the circles tend to infinity as we approach the union of coordinate
lines $vz=0$ in $X_t$, but is constant for fixed $t/vz$. Thus the limits of
these circles cover all of the $r$-plane as illustrated in Figure \ref{Fig: r-plane}. Summarizing, the fibre of $(X_t)_\RR$
over $[1,z,v]\in \RR\PP^2$ is a circle for $vz\neq 0$, while over the union of
coordinate lines $\bigcup_3\RR\PP^1\subset\RR\PP^2$ it is the whole
$K_{\PP^2}$-fibre.
%\medskip
%\begin{figure}
%\center{\input{circles.pspdftex}}
%\caption{The r-plane}
%\label{Fig: r-plane}
%\end{figure}

\begin{figure}
\center{\scalebox{.4}{\input{circles.pspdftex}}}
\caption{}
\label{Fig: r-plane}
\end{figure}

Let us compare this picture with the Kato-Nakayama space of $X_0$. We are only
interested in the restriction of $X_0^\KN$ to the fixed locus $B^\kappa\subset
B$. Define $\Lambda_{B^\kappa}$ as the sheaf of integral tangent vectors on
$B^\kappa$ fixed under $\kappa_*$. Then since $\ker\pi_*$ is spanned by a global
vertical tangent vector field $\xi$, we have a direct sum decomposition
\[
\iota_*\Lambda|_{B^\kappa}= \Lambda_{B^\kappa}\oplus\ul\ZZ
\]
with the $\ZZ$-factor spanned by $\xi$. The sheaf $\Lambda_{B^\kappa}$ consists
of those integral vectors in $\pi^*\Lambda_{\RR^2}|_{B^\kappa}$ that are invariant under
local monodromy. Thus $\Lambda_{B^\kappa}$ is a subsheaf of the trivial sheaf
$(\pi|_{B^\kappa})^*\Lambda_{\RR^2} =\ul{\ZZ^2}$ with quotient a sheaf
$\Lambda_\Delta$ supported on the discriminant locus $\Delta\subset B^\kappa$:
\[
0\lra \Lambda_{B^\kappa}\lra \ul{\ZZ^2}\lra \Lambda_\Delta\lra 0.
\]
The quotient $\Lambda_\Delta$ has stalks $\ZZ^2$ at the three vertices of
$\Delta$, with generization maps to $\ZZ$ on the interior of each of the edges of
$\Delta$. Said differently, the stalk of $\Lambda_{B^\kappa,x}$ at $x\in\Delta$
consists of the tangent vectors of the minimal cell containing $x$ in the
polyhedral decomposition $\P_\kappa$ of $B^\kappa=\RR^2$ induced by $\P$.

In the canonical description of $X_0^\KN$ over each maximal cell in
Lemma~\ref{Lem: mu^KN over maximal cell}, for each maximal cell $\sigma\in
\P_\kappa$, we obtain a canonical continuous surjection
\[
\Phi_\sigma:\sigma\times\Hom\big(\Lambda_\sigma,U(1)\big)\times U(1)\times U(1)\lra (\mu_\KN)^{-1}(\sigma)\subset X_0^\KN.
\]
The two $U(1)$-factors correspond to the phases of $s$ and $t$ respectively.
Over the $1$-skeleton of $\P_\kappa$ exactly those directions from
$\Hom(\Lambda_\sigma,U(1))\simeq U(1)^2$ get contracted by
$\Phi_\sigma$ that factor over $\Lambda_\Delta$. Thus $\Phi_\sigma$ induces a canonical bijection $\Hom(\Lambda_{B_\kappa,x},U(1))\times U(1)^2\simeq \mu_\KN^{-1}(x)$. Now
\[
\bigcup_{x\in\sigma} \Hom(\Lambda_{B_\kappa,x},U(1))
\]
is canonically in bijection with the toric surface $X_\sigma$ obtained by
viewing $\P_\kappa$ as a polyhedral decomposition of $\RR^2$. For example, for
$\sigma$ the interior triangle, we obtain $X_\sigma=\PP^2$. Thus
$\mu_\KN^{-1}(B^\kappa)$ is covered canonically by the product with $U(1)^2$ of
a union of the four toric surfaces defined by the four maximal cells of the
polyhedral decomposition $\P_\kappa$ of $B^\kappa=\RR^2$. Explicitly, these four
toric surfaces are $X_{\sigma_0}=\PP^2$ and the three restrictions
$X_{\sigma_1}$, $X_{\sigma_2}$, $X_{\sigma_3}$ of $K_{\PP^2}\simeq
\O_{\PP^2}(-3)$ to the three coordinate lines in $\PP^2$. The four pieces are
glued non-trivially using $\Arg(1+s)$, as in the description of Remark~\ref{Rem:
Transition of X_^KN in codim=1} with slab function $f=1+s$.

Our real involution $\tilde\iota$ induces the canonical real involution on the
four toric surfaces $X_{\sigma_0},\ldots,X_{\sigma_3}$ and on the $U(1)$-factor
for the phase of $t$. On $s$ the action is $s\mapsto \ol{s}^{-1}$, which acts
trivially on its phase. Thus $X_{0,\RR}^\KN(\pm 1)$ is a union of
$U(1)\times\RR\PP^2$, with $\RR\PP^2$ covering $4:1$ the triangle $\sigma_0$,
and the restrictions of $K_{\PP^2}$ to the real loci $\RR\PP^1$ in the three
coordinate lines of $\PP^2$, each a $U(1)$-bundle over the remaining cells
$\sigma_1,\sigma_2,\sigma_3$. As expected from Theorem~\ref{Thm: KN for X0 in simple case}
and Proposition \ref{Prop: pairs}, this is compatible with the above discussion for the real
locus of $X_t$. Note that since the present $B$ is not compact, we have to first
compactify the situation to apply Theorem~\ref{Thm: KN for X0 in simple case} and Proposition
\ref{Prop: pairs}. In the present example this compactification can easily be
achieved by replacing the unbounded cells of $\P$ by appropriate bounded cells.
Details are left to the reader.
\medskip

We have made no serious attempt to uncover the general structure of
$X_{0,\RR}^\KN$ involving non-trivial involutions $\kappa$ of $B$. It seems that
it would be easiest to restrict to cases where $B^\kappa$ is already covered by
cells of $\P$. This situation requires the more general setup of \cite{theta} to
admit vertices of $\P$ to lie in the singular locus $\Delta$ of the affine
structure. Then $B^\kappa$ defines an analytic subspace $\shY\subset\shX$.
Moreover, the restriction of $\mu_\KN$ to the fixed locus $B_\kappa$ then
factors over $\shY^\KN$ and we expect $\mu_\KN^{-1}(B^\kappa)\to Y_0^\KN$ to be
a $U(1)^r$-fibration with $r$ the local codimension of $B^\kappa$ in $B$. In the
real situation, the real involution should act trivially on these
$U(1)^r$-fibres, exhibiting $X_{0,\RR}^\KN$ as a $U(1)^r$-fibration over
$Y_{0,\RR}^\KN$.
%===========================================================
%===========================================================

\sloppy

\fussy

\end{document}